\documentclass[12pt]{amsart}

\usepackage{float}
\usepackage{amsmath}
\usepackage{amsfonts}
\usepackage{amssymb}
\usepackage{graphicx}
\usepackage{hyperref}
\usepackage{mathrsfs}
\usepackage{pb-diagram}
\usepackage{epstopdf}
\usepackage{amsmath,amsfonts,amsthm,enumerate,amscd,latexsym,curves}
\usepackage{bbm}
\usepackage{mathrsfs}
\usepackage{epsfig,epsf,xypic,epic}
\usepackage{caption}
\usepackage{subcaption}
\usepackage{tikz}
\usepackage{geometry}
\usepackage{tikz-cd}
\usepackage{enumitem}

\geometry{
	a4paper,
	total={170mm,257mm},
	left=22mm, right=22mm, 
	top=32mm, bottom=32mm
}

\usetikzlibrary{matrix,arrows,decorations.pathmorphing}
%\usepackage{mnsymbol}

%\addtolength{\textwidth}{+4cm} \addtolength{\textheight}{+2cm}
%\hoffset-2cm \voffset-1cm \setlength{\parskip}{5pt}
%\setlength{\parskip}{5pt}

\newtheorem{theorem}{Theorem}[section]

\newtheorem{lemma}[theorem]{Lemma}
\newtheorem{corollary}[theorem]{Corollary}
\newtheorem{question}[theorem]{Question}

\newtheorem{definition}[theorem]{Definition}

\newtheorem{example}[theorem]{Example}

\newtheorem{proposition}[theorem]{Proposition}

\newtheorem{remark}[theorem]{Remark}

\newtheorem{construction}[theorem]{Construction}

\newcommand{\Int}{\mathrm{Int}}

\newcommand{\hra}{\hookrightarrow}

\newcommand{\inner}[1]{\langle #1\rangle}

\newcommand{\bb}[1]{\mathbb{#1}}
\newcommand{\cu}[1]{\mathcal{#1}}
\newcommand{\til}[1]{\widetilde{#1}}

\newcommand{\ol}[1]{\overline{#1}}

\newcommand{\ul}[1]{\underline{#1}}
\newcommand{\Spec}[1]{\text{Spec}({#1})}

\def\codim{\text{codim}}

\tabcolsep=12pt

\begin{document}
	
	\title[]{Tropical Lagrangian multi-sections and toric vector bundles}
	
	\author[Suen]{Yat-Hin Suen}
	\address{Center for Geometry and Physics\\ Institute for Basic Science (IBS)\\ Pohang 37673\\ Republic of Korea}
    \curraddr{School of Mathematics\\Korea Institute for Advanced Study\\ Seoul \\Republic of Korea}
	\email{yhsuen@kias.re.kr}
	\date{\today}

	\begin{abstract}
	We introduce the notion of tropical Lagrangian multi-sections over a fan and study its relation with toric vector bundles. We also introduce a ``SYZ-type" construction for toric vector bundles which gives a reinterpretation of Kaneyama's linear algebra data. In dimension 2, this ``mirror-symmetric" approach provides us a pure combinatorial condition for checking which rank 2 tropical Lagrangian multi-sections arise from toric vector bundles.
	\end{abstract}
	
	\maketitle
	
	%\tableofcontents
	
	\section{Introduction}
	
	Toric geometry is an interaction between algebraic geometry and combinatorics. Difficult problems in algebraic geometry can usually be simplified in the toric world. Toric geometry also plays a key role in the current development of mirror symmetry. It provides a huge source of computable examples for mathematicians and physicists to understand mirror symmetry \cite{Abouzaid06,Abouzaid09,Chan09,Chan-Leung10a,Chan-Leung12,CPU14,Fang08,FLTZ11b,FLTZ12}. The famous Gross-Siebert program \cite{GS1,GS2,GS11} applies toric degenerations to solve the reconstruction problem in mirror symmetry, which is often referred to as the algebro-geometric SYZ program \cite{SYZ}.
	
	In this paper, we study the combinatorics of toric vector bundles. The study of toric vector bundles can be dated back to Kaneyama's classification \cite{Kaneyama_classification} using linear algebra data and also Klyachko's classification \cite{Klyachko1} using filtrations indexed by rays in the fan. Payne \cite{branched_cover_fan,moduli_toric_bundle} studied toric vector bundles and their moduli in terms of piecewise linear functions defined on cone complexes. Motivated by the work of Payne, the notion of tropical Lagrangian multi-sections was first introduced by the author of this paper in \cite{Suen_TP2} and generalized to arbitrary 2-dimensional integral affine manifolds with singularities in a joint work with Chan and Ma \cite{CMS_k3bundle}. 
	
	We begin by recalling some elementary facts about toric varieties and toric vector bundles in Section \ref{sec:toric}. In Section \ref{sec:trop_lag}, we introduce the notion of tropical Lagrangian multi-sections over a complete fan $\Sigma$ on $N_{\bb{R}}\cong\bb{R}^n$. A tropical Lagrangian multi-section $\bb{L}$ over $\Sigma$ is a branched covering map $\pi:(L,\Sigma_L,\mu)\to(N_{\bb{R}},\Sigma)$ of connected cone complexes\footnote{In \cite{Suen_TP2}, we assume the domain $L$ is a topological manifold. We extend the definition here by allowing $L$ to be a cone complex, which is not necessarily a manifold.} ($\mu:\Sigma_L\to\bb{Z}_{>0}$ is the weight or multiplicity map) together with a piecewise linear function $\varphi:L\to\bb{R}$. %The rank of $\bb{L}$ is the degree of the covering map $\pi:L\to N_{\bb{R}}$. There is a natural partial ordering $\leq$ (Definition \ref{def:po} and \ref{def:order}) on the set of tropical Lagrangian multi-sections of same rank, which leads to the notion of \emph{maximality}. Roughly speaking, $\bb{L}_1\leq\bb{L}_2$ if $\bb{L}_2$ covers $\bb{L}_1$ and the pull-back of $\varphi_1$ is $\varphi_2$.
    We will introduce three more concepts, namely, \emph{combinatorial union}, \emph{combinatorially indecomposability} and \emph{combinatorially equivalence}. These concepts allow us to break down a tropical Lagrangian multi-section into ``indecomposable" components. Moreover, these components enjoy some nice properties, for instance, the ramification locus of a combinatorially indecomposable tropical Lagrangian multi-section lies in the codimension 2 strata of $(L,\Sigma_L$) (Proposition \ref{prop:com_inde_separated}). Such indecomposability is also related to indecomposability of toric vector bundles as we will see in Section \ref{sec:reconsturction} (Theorem \ref{thm:L_cid_E_id})
    .

	In Section \ref{sec:bundle_lag}, we follow \cite{branched_cover_fan} to associate a tropical Lagrangian multi-section $\bb{L}_{\cu{E}}$ to a toric vector bundle $\cu{E}$ on $X_{\Sigma}$. Section \ref{sec:reconsturction} will be devoted to the converse. Namely, given a tropical Lagrangian multi-section $\bb{L}$ over a complete fan $\Sigma$, we would like to construct a toric vector bundle on $X_{\Sigma}$. We call this the \emph{reconstruction problem}. One should not expect $\bb{L}$ can completely determine a toric vector bundle due to its discrete nature, and Payne has already proved in \cite{branched_cover_fan} that $\bb{L}_{\cu{E}}$ only determines the total equivariant Chern class of $\cu{E}$. Therefore, we need to introduce some continuous data (Definition \ref{def:compatible}), which are the linear algebra data given by Kaneyama \cite{Kaneyama_classification}. The set of all such data on $\bb{L}$ modulo gauge equivalence will be denoted by $\cu{K}(\bb{L})$.
	
	A fundamental question that this paper would like to answer is: When is $\cu{K}(\bb{L})\neq\emptyset$? In Section \ref{sec:SYZ}, we give a ``SYZ-mirror-symmetric" approach to solve this problem. First of all, SYZ mirror symmetry \cite{SYZ} suggests that if a symplectic manifold admits a Lagrangian torus fibration, its complex mirror is obtained by taking the dual torus fibration. Furthermore, the SYZ program also suggests that holomorphic vector bundles are mirror to Lagrangian multi-sections. Given a Lagrangian multi-section whose underlying covering map is unbranched, its SYZ transform was defined in \cite{CS_SYZ_imm_Lag, LYZ}. However, the covering map can be branched over the base of the SYZ fibration. The SYZ program then suggests we first construct the \emph{semi-flat bundle}, which is obtained by the usual SYZ transform with the branch locus removed. However, the semi-flat bundle would receive non-trivial monodromies around those fibers above the branch locus and thus cannot be extended to the whole mirror space. To perform extension, we need to cancel these monodromies by remembering the ramification locus. The SYZ program suggests that the ramification locus should be remembered by the holomorphic disks bounded by the multi-section and certain SYZ fibers. The exponentiation of the generating function of these holomorphic disks is the so-called \emph{wall-crossing automorphism}. A good local example was given by Fukaya \cite{Fukaya_asymptotic_analysis}, Example 4.4. Moreover, he also pointed out in \cite{Fukaya_asymptotic_analysis}, Section 6.4 that, when the rank is 2, the semi-flat bundle needs to be twisted by a non-trivial local system in order to carry out the monodromy cancellation process.
	
	Going back to our tropical world, we restrict our attention to combinatorially indecomposable tropical Lagrangian multi-sections. This assumption implies the ramification locus is contained in the codimension 2 stratum $L^{(n-2)}$ of $(L,\Sigma_L)$ (Proposition \ref{prop:com_inde_separated}). Following the idea of the SYZ program and Fukaya's proposal, the reconstruction program should consist of two steps. The first step is to equip $L\backslash L^{(n-2)}$ with a suitable $\bb{C}^{\times}$-local system $\cu{L}$. Then we construct in Section \ref{sec:sf}, the \emph{semi-flat mirror bundle} $\cu{E}^{sf}(\bb{L},\cu{L})$ of $(\bb{L},\cu{L})$, which is a rank $r$ toric vector bundle defined on the 1-skeleton
	$$X_{\Sigma}^{(1)}:=\bigcup_{\tau\in\Sigma(n-1)}X_{\tau}$$
	of $X_{\Sigma}$. In general, the semi-flat mirror bundle cannot be extended to $X_{\Sigma}$ due to the presence of monodromies of $\pi:L\to N_{\bb{R}}$ around the branch locus $S\subset N_{\bb{R}}$. In order to cancel these monodromies, we will introduce a set of local automorphisms $\Theta:=\{\Theta_{\tau}(\omega')\}_{\tau\in\Sigma(n-1),\omega'\subset S}$ in Section \ref{sec:wall} to correct the transition maps of $\cu{E}^{sf}(\bb{L},\cu{L})$ so that it can be extended to $X_{\Sigma}$. If there exists a $\bb{C}^{\times}$-local system $\cu{L}$ on $L\backslash L^{(n-2)}$ and a collection of factors $\Theta$ that satisfy the \emph{consistency condition} (Definition \ref{def:consistent}), the tropical Lagrangian multi-section is called \emph{unobstructed} (Definition \ref{def:unobs} and see Remark \ref{rmk:unobstructed} for the terminology). Being unobstructed allows us to define a 1-cocycle $\{G_{\sigma_1\sigma_2}\}_{\sigma_1,\sigma_2\in\Sigma(n)}$ and gives a toric vector bundle $\cu{E}(\bb{L},\cu{L},\Theta)$ over $X_{\Sigma}$. It turns out that all Kaneyama data arise from this construction.
    \begin{theorem}[=Theorem \ref{thm:bundle_unobstructed}]
    Suppose $\bb{L}$ is combinatorially indecomposable and admits a Kaneyama data ${\bf{g}}$. Then there exists a $\bb{C}^{\times}$-local system $\cu{L}$ on $L\backslash L^{(n-2)}$ and consistent $\Theta$ such that $\cu{E}(\bb{L},\cu{L},\Theta)=\cu{E}(\bb{L},{\bf{g}})$.
    \end{theorem}
	The factors $\{\Theta_{\tau}(\omega')\}$ should be thought of as wall-crossing automorphisms as described above, which are responsible for Maslov index 0 holomorphic disks bounded by a Lagrangian multi-section and certain fibers of the torus fibration $T^*N_{\bb{R}}/M\to N_{\bb{R}}$. Hence our reconstruction program can be regarded as a ``tropical SYZ transform".

    In the last section, Section \ref{sec:rank2}, we apply our ``SYZ construction" to study the unobstructedness of combinatorially indecomposable tropical Lagrangian multi-sections of rank $2$ over a complete fan on $N_{\bb{R}}\cong\bb{R}^2$. First of all, not all such objects are unobstructed (Example \ref{eg:separated_obstructed}). Therefore, we need extra conditions to guarantee unobstructedness. We will define a \emph{slope condition} (Definition \ref{def:slope_condition}), which is completely determined by the combinatorics of the piecewise linear function $\varphi:L\to\bb{R}$ of $\bb{L}$. It turns out this combinatorial condition completely determines the obstruction of $\bb{L}$.
    
	\begin{theorem}[=Theorem \ref{thm:rank2_unobstructed}]
    A combinatorially indecomposable rank $2$ tropical Lagrangian multi-section $\bb{L}$ over a 2-dimensional complete fan $\Sigma$ is unobstructed if and only if it satisfies the slope condition.
    \end{theorem}
    
    From the proof of Theorem \ref{thm:rank2_unobstructed}, we can deduce an interesting inequality, bounding the dimension of moduli spaces of toric vector bundles with fixed equivariant Chern classes by the number of rays in $\Sigma$.
    
    \begin{corollary}[=Corollary \ref{cor:dim}]
    If $\bb{L}$ is a combinatorially indecomposable rank $2$ tropical Lagrangian multi-section, then we have the inequality $\dim_{\bb{C}}(\cu{K}(\bb{L}))\leq\#\Sigma(1)-1$.
    \end{corollary}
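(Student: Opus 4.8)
The plan is to extract the bound from the explicit parametrisation of $\cu{K}(\bb{L})$ built during the proof of Theorem \ref{thm:rank2_unobstructed}. By Theorem \ref{thm:bundle_unobstructed} every class in $\cu{K}(\bb{L})$ is represented by a pair $(\cu{L},\Theta)$ satisfying the consistency condition, so it suffices to bound the dimension of the moduli of such pairs modulo gauge equivalence. Since $\bb{L}$ is combinatorially indecomposable of rank $2$ over a complete fan on $N_{\bb{R}}\cong\bb{R}^2$, Proposition \ref{prop:com_inde_separable} pushes the ramification locus of $\pi:L\to N_{\bb{R}}$ into the $0$-cone; hence $S=\{0\}$, the index set of $\Theta$ collapses to one factor $\Theta_{\tau}$ for each ray $\tau\in\Sigma(1)$, and $L\setminus L^{(0)}$ is a connected double cover of $N_{\bb{R}}\setminus\{0\}\simeq S^1$, so that $H^1(L\setminus L^{(0)};\bb{C}^{\times})\cong\bb{C}^{\times}$ contributes a single parameter to $\cu{L}$.

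First I would put the data in the semi-flat normal form from Section \ref{sec:sf}. In that frame each $\Theta_{\tau}$ is a unipotent automorphism of the rank-$2$ fibre over $X_{\tau}$, recorded by one scalar $\theta_{\tau}\in\bb{C}$; together with the holonomy of $\cu{L}$ this gives a candidate parameter space of dimension $\#\Sigma(1)+1$. The next step is to impose the consistency condition around the unique ramification point $0$: the cyclically ordered composition of the $\Theta_{\tau}$ with the intervening semi-flat transition maps and with the monodromy of $\pi$ must equal the identity. In rank $2$ the diagonal part of this composition is pinned down by the piecewise linear function $\varphi$, so the identity reduces to one non-trivial scalar relation among the $\theta_{\tau}$ and the holonomy of $\cu{L}$, cutting the candidate space to dimension $\#\Sigma(1)$.

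Finally I would quotient by gauge. The effective gauge group contains the $1$-parameter torus that simultaneously rescales the semi-flat frame, equivalently rescales $\cu{L}$; it preserves the solution locus and acts there with $1$-dimensional generic orbits, and dividing out yields $\dim_{\bb{C}}\cu{K}(\bb{L})\le\#\Sigma(1)-1$. Because only a lower bound of $1$ on the effective gauge dimension is needed, the estimate survives as an inequality even when some $\theta_{\tau}$ are forced by the slope condition and the true dimension is strictly smaller, which is presumably why the statement is phrased as a bound rather than an equality.

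The step I expect to be the main obstacle is showing that the consistency condition contributes exactly one independent scalar relation and that this relation genuinely involves the $\theta_{\tau}$: if it involved only the holonomy of $\cu{L}$, or were automatically satisfied, the count would be off by one. This hinges on the normal form for the semi-flat transition maps and on the slope condition of Theorem \ref{thm:rank2_unobstructed}, and it is here that combinatorial indecomposability enters, ruling out the degenerate splitting types over the maximal cones for which the wall-crossing factors would decouple from the closing-up equation.
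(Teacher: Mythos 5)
Your overall strategy is the same as the paper's: use Theorem \ref{thm:bundle_unobstructed} to reduce to the explicit parametrization by $(\cu{L},\Theta)$, note that combinatorial indecomposability forces $S=\{0\}$ so that there is one wall-crossing parameter $\theta_\tau$ per ray, and then count. But your bookkeeping contains a genuine error, and the two halves of it do not actually cancel. First, the consistency condition is a $2\times 2$ matrix identity, and its determinant already forces the holonomy of $\cu{L}$ to equal $-1$ --- this is exactly Lemma \ref{lem:local_system}. So the holonomy is not a residual modulus to be removed by gauge at the end; it is killed by consistency itself, as one scalar relation, while the off-diagonal entries of the same matrix identity supply at least one further independent relation among the $\theta_\tau$ (the relation $1+ab=0$, resp.\ $1+bc=0$, exhibited in the proof of Theorem \ref{thm:rank2_unobstructed}). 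Your claim that consistency ``reduces to one non-trivial scalar relation'' therefore undercounts, and with only one relation you land at $\#\Sigma(1)$, which is just Theorem \ref{thm:dim_r} for $r=2$, not the corollary.

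Second, the gauge step you use to recover the missing $-1$ does not work. The one-parameter torus that simultaneously rescales the semi-flat frames acts by conjugation by a scalar matrix, hence trivially on every transition map; and rescaling the cocycle of $\cu{L}$ does not change its holonomy, which is the only gauge-invariant of the local system. More precisely, once the cocycle is put in the normal form $g^{sf}_{\sigma_i^{(\alpha)}\sigma_{i+1}^{(\alpha)}}=1$, the connectedness of the double cover $L\setminus S'$ forces the stabilizing diagonal gauge transformations to be global scalars, which fix all the $\theta_\tau$; so the gauge orbits on your $(\#\Sigma(1)+1)$-dimensional candidate space are $0$-dimensional, not $1$-dimensional, and quotienting buys you nothing. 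The correct accounting, which is what the paper does, is: the holonomy is pinned to $-1$ by the determinant of the consistency equation (so $\cu{L}$ contributes no moduli), the $\#\Sigma(1)$ parameters $\theta_\tau$ are then cut down by at least one by the scalar equation $1+ab=0$ (or $1+bc=0$) coming from the off-diagonal entries, and surjectivity of the construction onto $\cu{K}(\bb{L})$ (Theorem \ref{thm:bundle_unobstructed}) turns this into the bound $\dim_{\bb{C}}\cu{K}(\bb{L})\le\#\Sigma(1)-1$.
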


	\subsection*{Acknowledgment}
	The author is grateful to 	
	Kwokwai Chan and Ziming Nikolas Ma for useful discussions. The author would also like to thank Yong-Geun Oh for his interest in this work. The work of Y.-H. Suen was supported by IBS-R003-D1.
	
	\section{Toric varieties and toric vector bundles}\label{sec:toric}
	
	We first recall some basics in toric geometry. Standard references are \cite{Cox_book, Dan_toric_geometry, Fulton_book}. Throughout the whole article, we denote by $N$ a rank $n$ lattice and $M:=Hom_{\bb{Z}}(N,\bb{Z})$ the dual lattice. We also set $N_{\bb{R}}:=N\otimes_{\bb{Z}}\bb{R}$ and $M_{\bb{R}}:=M\otimes_{\bb{Z}}\bb{R}$. A \emph{fan} $\Sigma$ in $N_{\bb{R}}$ is a collection of rational strictly convex cones in $N_{\bb{R}}$ such that
	\begin{enumerate}
	    \item if $\sigma\in\Sigma$ and $\tau\subset\sigma$ is a face, then $\tau\in\Sigma$ and
	    \item if $\sigma_1,\sigma_2\in\Sigma$, then $\sigma_1\cap\sigma_2\in\Sigma$.
	\end{enumerate}
	Denote by $\Sigma(k)$ the collection of all $k$-dimensional cones in $\Sigma$. For each cone $\sigma\in\Sigma$, one can associate the corresponding dual cone $\sigma^{\vee}$ in $M_{\bb{R}}$, which is defined by
	$$\sigma^{\vee}:=\{x\in M_{\bb{R}}:\inner{x,\xi}\geq 0,\forall\xi\in\sigma\}.$$
	It is also a strictly convex rational cone. For $\tau\subset\sigma$, we have $\sigma^{\vee}\subset\tau^{\vee}$. Define
	$$U(\sigma):=\Spec{\bb{C}[\sigma^{\vee}\cap M]}.$$
	There is a $(\bb{C}^{\times})^n$-action on $U(\sigma)$, given by
	$$\lambda\cdot z^m:=\lambda^mz^m,$$
	for $m\in\sigma^{\vee}\cap M$. For $\tau\subset\sigma$, we have an open embedding $U(\tau)\to U(\sigma)$. The toric variety $X_{\Sigma}$ associated to $\Sigma$ is defined to be the direct limit
	$$X_{\Sigma}:=\lim_{\longrightarrow}U(\sigma).$$
	The $(\bb{C}^{\times})^n$-actions on affine charts agree and so induce a $(\bb{C}^{\times})^n$-action on $X_{\Sigma}$.

	\begin{definition}
    Let $X_{\Sigma}$ be an $n$-dimensional toric variety. A vector bundle $\cu{E}$ on $X_{\Sigma}$ is called \emph{toric} if the $(\bb{C}^{\times})^n$-action on $X_{\Sigma}$ lifts to an action on $\cu{E}$ which is linear on fibers. Equivalently (see \cite{Kaneyama_classification}), for each $\lambda\in(\bb{C}^{\times})^n$, there is a vector bundle isomorphism $\lambda^*\cu{E}\cong\cu{E}$ covering the identity of $X_{\Sigma}$.
	\end{definition}

	Given a toric vector bundle $\cu{E}$ on $X_{\Sigma}$, the $(\bb{C}^{\times})^n$-action constrains the transition maps of $\cu{E}$. Let $G_{\sigma}:\cu{E}|_{U(\sigma)}\to U(\sigma)\times\bb{C}^r$ be an equivariant trivialization and
	$$G_{\sigma_1\sigma_2}:=G_{\sigma_2}\circ G_{\sigma_1}^{-1}:U(\sigma_1\cap\sigma_2)\times\bb{C}^r\to U(\sigma_1\cap\sigma_2)\times\bb{C}^r$$
	be the transition map from the affine chart $U(\sigma_1)$ to the chart $U(\sigma_2)$. We can always choose the trivialization $G_{\sigma}:\cu{E}|_{U(\sigma)}\to U(\sigma)\times\bb{C}^r$ so that $(\bb{C}^{\times})^n$ acts diagonally on fibers, that is, the action on $\bb{C}[\sigma^{\vee}\cap M]\otimes_{\bb{C}}\bb{C}[t_1,\dots,t_r]$ is of form
	$$\lambda\cdot (z^m,t_1,\dots,t_r)=(\lambda^mz^m,\lambda^{m^{(1)}(\sigma)}t_1,\dots,\lambda^{m^{(r)}(\sigma)}t_r),$$
	for some $m^{(1)}(\sigma),\dots,m^{(r)}(\sigma)\in M$. Since this action extends to $X_{\Sigma}$, we must have
	$$G_{\sigma_1\sigma_2}^{(\alpha\beta)}(z)=g_{\sigma_1\sigma_2}^{(\alpha\beta)}z^{m^{(\alpha)}(\sigma_1)-m^{(\beta)}(\sigma_2)},$$
	for some $g_{\sigma_1\sigma_2}^{(\alpha\beta)}\in\bb{C}$ so that $g_{\sigma_1\sigma_2}^{(\alpha\beta)}\neq 0$ only if $m^{(\alpha)}(\sigma_1)-m^{(\beta)}(\sigma_2)\in(\sigma_1\cap\sigma_2)^{\vee}\cap M$.

	\section{Tropical Lagrangian multi-sections}\label{sec:trop_lag}
	
	In this section, we introduce the notion of tropical Lagrangian multi-sections. We begin by reviewing some basics about cone complexes. We follow \cite{branched_cover_fan} with some small notation changes.
	
	\begin{definition}[\cite{branched_cover_fan}, Definition 2.1]
	A \emph{cone complex} consists of a topological space $X$ together with a finite collection $\Sigma$ of closed subsets of $X$ and for each $\sigma\in\Sigma$, a finitely generated subgroup $M(\sigma)$ of the group of continuous functions on $\sigma$, satisfying the following conditions:
	\begin{enumerate}
	    \item The natural map $\phi_{\sigma}:\sigma\to(M(\sigma)\otimes_{\bb{Z}}\bb{R})^{\vee}$ given by
	    $$x\mapsto\left(u\mapsto u(x)\right)$$
	    maps $\sigma$ homeomorphically onto a convex rational polyhedral cone.
	    \item The preimage of any face of $\phi_{\sigma}(\sigma)$ is an element of $\Sigma$ and $M(\tau)=\{m|_{\tau}\,|\,m\in M(\sigma)\}$.
	    \item The topological space $X$ admits the following decomposition
	    $$X=\bigsqcup_{\sigma\in\Sigma}\Int(\sigma),$$
	    where $\Int(\sigma)$ denotes the relative interior of $\sigma$.
	\end{enumerate}
	A cone complex $(X,\Sigma)$ is said to be \emph{connected} if the topological space $X$ is connected. The space of \emph{piecewise linear functions} on $(X,\Sigma)$ is defined to be
	$$PL(X,\Sigma):=\{\varphi:X\to\bb{R}\,|\,\varphi|_{\sigma}\in M(\sigma),\forall \sigma\in\Sigma\}.$$
	\end{definition}
	
	\begin{remark}
	The connected components of $X$ are parametrized by minimal cones in $\Sigma$. See \cite{branched_cover_fan}, Remark 2.6.
	\end{remark}

	\begin{definition}[\cite{branched_cover_fan}, Definition 2.9]
	A \emph{morphism} of cone complexes $f:(X',\Sigma_{X'})\to(X,\Sigma_X)$ is a continuous map $f:X'\to X$ such that for any $\sigma'\in\Sigma_{X'}$, there exists $\sigma\in\Sigma$ such that $f(\sigma')\subset\sigma$ and $f^*M(\sigma)\subset M(\sigma')$. 
	\end{definition}
	
	\begin{definition}[\cite{branched_cover_fan}, Definition 2.16]
	A \emph{weighted cone complex} consists of a cone complex $(X,\Sigma)$ together with a function $\mu:X\to\bb{Z}_{>0}$ such that for any $\sigma\in\Sigma$, $\mu|_{\Int(\sigma)}$ is constant. We simply write $\mu(\sigma)$ for $\mu|_{\Int(\sigma)}$.
	\end{definition}
	
	If $(X',\Sigma_{X'})$ is weighted by $\mu$, for a surjective morphism $f:(X',\Sigma_{X'})\to(X,\Sigma_X)$, we can define $Tr_f(\mu):X\to\bb{Z}_{>0}$ by
	$$Tr_f(\mu)(x):=\sum_{x'\in f^{-1}(x)}\mu(x'),$$
	called the \emph{trace of $\mu$ by $f$}.
	
	\begin{definition}[\cite{branched_cover_fan}, Definition 2.17]
    Let $(B,\Sigma)$ be a connected cone complex and $(L,\Sigma_L,\mu)$ be a connected weighted cone complex. A \emph{branched covering map} $\pi:(L,\Sigma_L,\mu)\to(B,\Sigma)$ is a surjective morphism of cone complexes such that
    \begin{enumerate}
        \item For each $\sigma'\in\Sigma_L$, $\pi$ maps $\sigma$ homeomorphically to $\pi(\sigma)\in\Sigma$.
        \item For any connected open set $U\subset B$ and connected $V\subset\pi^{-1}(U)$, the function $Tr_{\pi|_V}(\mu):U\to\bb{Z}_{>0}$ is constant.
    \end{enumerate} 
    The morphism $\pi:(L,\Sigma_L,\mu)\to(B,\Sigma)$ is said to be \emph{ramified along $\tau'\in\Sigma_L$} if $\mu(\tau')>1$. The number $Tr_{\pi}(\mu)$ is called the degree of $\pi:(L,\Sigma_L,\mu)\to(B,\Sigma)$. The subset
    $$S':=S'(\bb{L}):=\bigcup_{\tau'\in\Sigma_L:\mu(\tau')>1}\tau'\subset L$$
    is called the \emph{ramification locus} of $\pi$ and $S:=S(\bb{L}):=\pi(S')$ is called the \emph{branch locus} of $\pi$.
	\end{definition}
	
	\begin{definition}\label{def:po}
	Let $\pi_1:(L_1,\Sigma_{L_1},\mu_1)\to(B,\Sigma),\pi_2:(L_2,\Sigma_{L_2},\mu_2)\to(B,\Sigma)$ be branched covering maps of the same degree. We write $\pi_1\leq\pi_2$ if there exists a surjective morphism of cone complexes $f:(L_2,\Sigma_{L_2})\to(L_1,\Sigma_{L_1})$ such that $\pi_1\circ f=\pi_2$ and $Tr_f(\mu_2)=\mu_1$.
	\end{definition}

	\begin{definition}[\cite{branched_cover_fan}, Definition 2.26]
	A branched covering map $\pi:(L,\Sigma_L,\mu)\to(B,\Sigma)$ is called \emph{maximal} if it is maximal with respective to the partial ordering given in Definition \ref{def:po}.
	\end{definition}

	Given a cone complex $(L,\Sigma_L)$, we define
	$$L^{(n-k)}:=\bigcup_{\tau'\in\Sigma_L:\codim(\tau')=k}\tau'\subset L,$$
	the codimension $k$ stratum of $(L,\Sigma_L)$. Payne showed in \cite{branched_cover_fan}, Proposition 2.30 that if $\Sigma$ is a complete fan in $N_{\bb{R}}$, the ramification locus of any maximal branched covering map $\pi:(L,\Sigma_L,\mu)\to(N_{\bb{R}},\Sigma)$ lies in the codimension 2 stratum $L^{(n-2)}$ of $(L,\Sigma_L)$.	Now we focus on $B=N_{\bb{R}}\cong\bb{R}^n$ and $\Sigma$ is a complete fan on $N_{\bb{R}}$. In this case, $B$ carries a natural affine structure and $\Sigma$ turns $(B,\Sigma)$ into a cone complex. If $\pi:(L,\Sigma_L,\mu)\to(B,\Sigma)$ is a branched covering map, then for any $\sigma'\in\Sigma_L(n)$, we have $\pi^*M=\pi^*M(\sigma)=M(\sigma')$ as $\pi|_{\sigma'}:\sigma'\to\sigma$ is an isomorphism. Hence we can identity $M(\sigma')$ with $M$ via $\pi^*$ naturally. We can then define
	$$Lin(L):=\{f\in C^0(L,\bb{R}):\exists m\in M\text{ such that }f|_{\sigma'}=m,\forall \sigma'\in\Sigma_L\},$$
	to be the space of linear function on $L$. It is clear that $Lin(L)\subset PL(L,\Sigma_L)$. Moreover, as $L$ is assumed to be connected, it is clear that $Lin(L)=Lin(B)=M$.
	
	\begin{definition}\label{def:TLMS}
	 Let $\Sigma$ be a complete fan on $N_{\bb{R}}$. A \emph{tropical Lagrangian multi-section of rank $r$ over $\Sigma$} is a quintuple $\bb{L}:=(L,\Sigma_L,\mu,\pi,\varphi)$, where
	 \begin{enumerate}
	     \item $(L,\Sigma_L)$ is a connected cone complex weighted by $\mu$.
	     \item $\pi:(L,\Sigma_L,\mu)\to(N_{\bb{R}},\Sigma)$ is a branched covering map such that $Tr_{\pi}(\mu)=r$.
	     \item $\varphi$ is a piecewise linear function on $(L,\Sigma_L)$.
	 \end{enumerate}
	 The number $r$ is called the \emph{rank} of $\bb{L}$ and is denoted by $rk(\bb{L})$. The underlying branched covering map of $\bb{L}$ is denoted by $\ul{\bb{L}}$. A tropical Lagrangian multi-section $\bb{L}$ is said to be \emph{maximal} if $\ul{\bb{L}}$ is maximal.
	\end{definition}
	
	\begin{remark}
	In \cite{Suen_TP2}, the author provided a definition of tropical Lagrangian multi-sections over integral affine manifolds with singularities whose domain of the branched covering map is a topological manifold. While in \cite{CMS_k3bundle}, the authors gave a definition of tropical Lagrangian multi-sections over 2-dimensional integral affine manifolds with singularities \emph{equipped with polyhedral decomposition}, where they also assumed the domain is also a topological manifold equipped with a polyhedral decomposition that is compatible with the covering map. Of course, if we restrict our attention to the case where the affine manifold is $\bb{R}^2$ with polyhedral decomposition being a fan $\Sigma$, Definition \ref{def:TLMS} extends Definition 3.6 in \cite{CMS_k3bundle} because we don't assume $L$ is a topological manifold here.
	\end{remark}
	
	\begin{remark}
	In \cite{Abouzaid09}, Abouzaid used the terminology ``tropical Lagrangian section" to stand for an honest Lagrangian section of the torus fibration ${\rm{Log}}:(\bb{C}^{\times})^n\to\bb{R}^n$. The term ``tropical" in this paper stands for a combinatorial/discrete replacement for Lagrangian multi-sections, which are supposed to be mirror to vector bundles on $X_{\Sigma}$. However, it is not hard to show that a tropical Lagrangian section ($r=1$) in our combinatorial sense always produces a tropical Lagrangian section in the sense of Abouzaid by smoothing the piecewise linear function $\varphi:|\Sigma|\to\bb{R}$ suitably. Thus our definition is somehow a generalization of Abouzaid's one. Nevertheless, we apologize for any possible confusion with the use of the terminology here.
	\end{remark}
	
	\begin{definition}\label{def:order}
	Let $\bb{L}_1,\bb{L}_2$ be tropical Lagrangian multi-sections of the same rank. We write $\bb{L}_1\leq\bb{L}_2$ if $\ul{\bb{L}}_1\leq\ul{\bb{L}}_2$ via some $f$ such that $f^*\varphi_1=\varphi_2$.
	\end{definition}
	
	%\begin{remark}
	%The ordering $\bb{L}_1\leq\bb{L}_2$ still makes sense for disjoint union of %tropical Lagrangian multi-sections. We will use this generalization later.
	%\end{remark}

	\begin{definition}
	Let $\bb{L}_1,\bb{L}_2$ be tropical Lagrangian multi-sections over a fan $\Sigma$. We write $\bb{L}_2\sim_c\bb{L}_1$ if $rk(\bb{L}_1)=rk(\bb{L}_2)$ and there exists a tropical Lagrangian multi-section $\bb{L}$ over $\Sigma$ such that $\bb{L}\leq\bb{L}_i$, for all $i=1,2$. We say $\bb{L}_1$ is \emph{combinatorially equivalent} to $\bb{L}_2$ if there exists a sequence of tropical Lagrangian multi-sections $\bb{L}_1',\bb{L}_2',\dots,\bb{L}_k'$ such that $\bb{L}_1'=\bb{L}_1,\bb{L}_k'=\bb{L}_2$ and $\bb{L}_{i+1}'\sim_c\bb{L}_i'$, for all $i=1,\dots,k-1$.
	\end{definition}
	
	\begin{remark}
	The relation $\sim_c$ is only reflexive and symmetric. The notion of combinatorially equivalence is the transitive closure of $\sim_c$ and hence, an equivalence relation.
	\end{remark}

	Now we define an important class of tropical Lagrangian multi-sections.

	\begin{definition}\label{def:separated}
	A tropical Lagrangian multi-section $\bb{L}=(L,\Sigma_L,\mu,\pi,\varphi)$ is said to be \emph{$k$-separated} if it satisfies the following condition: For any $\tau\in\Sigma(k)$ and distinct lifts $\tau^{(\alpha)},\tau^{(\beta)}\in\Sigma_L(k)$ of $\tau$, we have $\varphi|_{\tau^{(\alpha)}}\neq\varphi|_{\tau^{(\beta)}}$. Note that $k$-separability implies $K$-separability for all $K\geq k$. A tropical Lagrangian multi-section is said to be \emph{separated} if it is $1$-separated.
	\end{definition}
	
	\begin{remark}
	Definition \ref{def:separated} holds vacuously for all rank 1 tropical Lagrangian multi-sections.
	\end{remark}
	
	We can always ``separate" a tropical Lagrangian multi-section in the following sense.
	
	\begin{proposition}\label{prop:seperated}
	For any tropical Lagrangian multi-section $\bb{L}$ over $\Sigma$, there exists a separated tropical Lagrangian multi-section $\bb{L}_{sep}$ over $\Sigma$ such that $\bb{L}_{sep}\leq\bb{L}$. In particular, every tropical Lagrangian multi-section is combinatorially equivalent to a separated one.
	\end{proposition}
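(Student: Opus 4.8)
The plan is to construct $\bb{L}_{sep}$ as a quotient of $\bb{L}$ that collapses together those lifts of a cone along which the piecewise linear function agrees. Concretely, declare two cones $\sigma',\sigma''\in\Sigma_L$ equivalent, $\sigma'\approx\sigma''$, when $\pi(\sigma')=\pi(\sigma'')=:\sigma$ and the two functions $\varphi\circ(\pi|_{\sigma'})^{-1}$ and $\varphi\circ(\pi|_{\sigma''})^{-1}$ on $\sigma$ coincide — this makes sense because $M(\sigma')=\pi^*M(\sigma)$ for a branched covering over a complete fan. This is visibly an equivalence relation, and it is compatible with the face poset: if $\sigma'\approx\sigma''$ and $\tau'\subset\sigma'$ is a face, the unique face $\tau''\subset\sigma''$ with $\pi(\tau'')=\pi(\tau')$ satisfies $\tau'\approx\tau''$, since restricting $\varphi$ to a face commutes with $\pi$. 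I would then promote $\approx$ to an equivalence relation on points of $L$ (identify $x\in\Int(\sigma')$ with $y\in\Int(\sigma'')$ when $\sigma'\approx\sigma''$ and $\pi(x)=\pi(y)$), set $L_{sep}:=L/\!\approx$ with quotient map $f\colon L\to L_{sep}$, and give it the cone complex structure $\Sigma_{sep}:=\{f(\sigma'):\sigma'\in\Sigma_L\}$ with $M(f(\sigma')):=\pi_{sep}^*M(\pi(\sigma'))$, where $\pi_{sep}\colon L_{sep}\to N_{\bb{R}}$ is induced by $\pi$. Since each $\pi|_{\sigma'}$ is injective, so is $f|_{\sigma'}$; hence $f(\sigma')$ is a polyhedral cone isomorphic to $\pi(\sigma')$, the relative interiors $\Int(f(\sigma'))$ partition $L_{sep}$, and $L_{sep}$ is connected as a continuous image of the connected $L$. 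Finally put $\mu_{sep}:=Tr_f(\mu)$, let $\varphi_{sep}$ be the function with $\varphi_{sep}\circ f=\varphi$ (well defined precisely because $\approx$ identifies only cones with equal $\varphi$), and set $\bb{L}_{sep}:=(L_{sep},\Sigma_{sep},\mu_{sep},\pi_{sep},\varphi_{sep})$.

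The next step is to check that $\pi_{sep}$ is a branched covering map with $Tr_{\pi_{sep}}(\mu_{sep})=r$, so that $\bb{L}_{sep}$ is a genuine tropical Lagrangian multi-section. Condition (1) of the definition holds because $\pi_{sep}|_{f(\sigma')}$ is identified via $f|_{\sigma'}$ with the homeomorphism $\pi|_{\sigma'}$. For the trace condition (2), fix a connected open $U\subset N_{\bb{R}}$ and a connected $V\subset\pi_{sep}^{-1}(U)$. From $f^{-1}(\Int(f(\sigma')))=\bigsqcup_{\sigma''\approx\sigma'}\Int(\sigma'')$ one gets $\mu_{sep}(f(\sigma'))=\sum_{\sigma''\approx\sigma'}\mu(\sigma'')$, and a short computation then gives the identity $Tr_{\pi_{sep}|_V}(\mu_{sep})=Tr_{\pi|_{f^{-1}(V)}}(\mu)$ of functions on $U$. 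Now $f^{-1}(V)$ is open in $\pi^{-1}(U)$ but need not be connected, so I would decompose it into connected components $W_1,W_2,\dots$, apply the branched covering property of $\pi$ to each $W_j\subset\pi^{-1}(U)$ to conclude that $Tr_{\pi|_{W_j}}(\mu)$ is constant on $U$, and sum. Taking $U=N_{\bb{R}}$ and $V=L_{sep}$ (so $f^{-1}(V)=L$) yields $Tr_{\pi_{sep}}(\mu_{sep})=Tr_{\pi}(\mu)=r$. Since $\varphi_{sep}|_{f(\sigma')}$ is identified with $\varphi|_{\sigma'}\in M(\sigma')=\pi^*M(\pi(\sigma'))$, it lies in $M(f(\sigma'))$, so $\varphi_{sep}$ is piecewise linear.

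It then remains to verify $\bb{L}_{sep}\leq\bb{L}$ and that $\bb{L}_{sep}$ is separable. The map $f$ is a surjective morphism of cone complexes with $\pi_{sep}\circ f=\pi$, $Tr_f(\mu)=\mu_{sep}$, and $f^*\varphi_{sep}=\varphi$ by construction — exactly the condition $\bb{L}_{sep}\leq\bb{L}$ of Definition \ref{def:order}. Separability is built into the construction: two distinct lifts $\tau^{(\alpha)},\tau^{(\beta)}\in\Sigma_{sep}(1)$ of some $\tau\in\Sigma(1)$ are the $\approx$-classes of two rays of $\Sigma_L$ that are, by the very definition of $\approx$, not equivalent, so $\varphi_{sep}|_{\tau^{(\alpha)}}\neq\varphi_{sep}|_{\tau^{(\beta)}}$; with connectedness this makes $\bb{L}_{sep}$ $1$-separable. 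The last assertion is then immediate: $\bb{L}_{sep}\leq\bb{L}$ and $\bb{L}_{sep}\leq\bb{L}_{sep}$ show $\bb{L}_{sep}\sim_c\bb{L}$, so $\bb{L}$ is combinatorially equivalent to the separable $\bb{L}_{sep}$.

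The step I expect to require the most care is confirming that $\pi_{sep}$ still obeys the trace/degree axiom of a branched covering: because $f$ is only a quotient map, preimages of connected sets can break into several components, and one must see that the transferred weight $\mu_{sep}$ interacts with this splitting exactly so that constancy of traces passes from $\bb{L}$ to $\bb{L}_{sep}$. Verifying that $\approx$ descends to a bona fide cone complex structure — face-compatibility, and correct behaviour of $M(f(\sigma'))$ under restriction to faces — is the other point needing attention, though it is more bookkeeping than genuine difficulty.
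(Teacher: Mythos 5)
Your construction is exactly the one the paper uses: quotient $L$ by identifying lifts of a cone on which $\varphi$ agrees, push forward the multiplicity via $Tr_q$, and let $\varphi$ descend, which gives $\bb{L}_{sep}\leq\bb{L}$ with separability built in. Your write-up is in fact more careful than the paper's (which dismisses the face-compatibility, trace, and continuity checks as ``clear''), so the proposal is correct and matches the intended proof.
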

	\begin{proof}
	    We define a cone complex $(L_{sep},\Sigma_{sep}')$ as follows. Let $\sigma\in\Sigma$. Two lifts $\sigma^{(\alpha)},\sigma^{(\beta)}\in\Sigma$ of $\sigma$ are identified if and only if $\varphi|_{\sigma^{(\alpha)}}=\varphi|_{\sigma^{(\beta)}}$. We denote the quotient map $L\to L_{sep}$ by $q$. The set of cones is given by
	    $$\Sigma_{sep}':=\{q(\sigma')\,|\,\sigma'\in\Sigma_L\}.$$
	    The projection map $\pi:L\to N_{\bb{R}}$ factors through $q$ and hence descends to a projection $\pi_{sep}:L_{sep}\to N_{\bb{R}}$. Define $\mu_{sep}:=Tr_q(\mu)$. It is clear that $\pi_{sep}:(L_{sep},\Sigma_{sep}',\mu_{sep})\to(N_{\bb{R}},\Sigma)$ is a branched covering map. We define $\varphi_{sep}:L_{sep}\to\bb{R}$ by
	    $$\varphi_{sep}|_{q(\sigma')}=\varphi|_{\sigma'}.$$
	    It is clear that $\varphi_{sep}|_{q(\sigma')}$ is independent of the choice of $\sigma'\in\Sigma_L$ and $\varphi_{sep}$ is continuous. It also follows from construction that $q^*\varphi_{sep}=\varphi$. Hence $\bb{L}_{sep}\leq\bb{L}$.
	\end{proof}
	
	\begin{example}
	Given a tropical Lagrangian multi-section $\bb{L}$ as shown in Figure \ref{fig:separation}, its canonical separation $\bb{L}_{sep}$ is given by gluing $\sigma_0^{(1)},\sigma_0^{(2)}$ over $\sigma_0$.
	\begin{figure}[H]
		\centering
		\includegraphics[width=140mm]{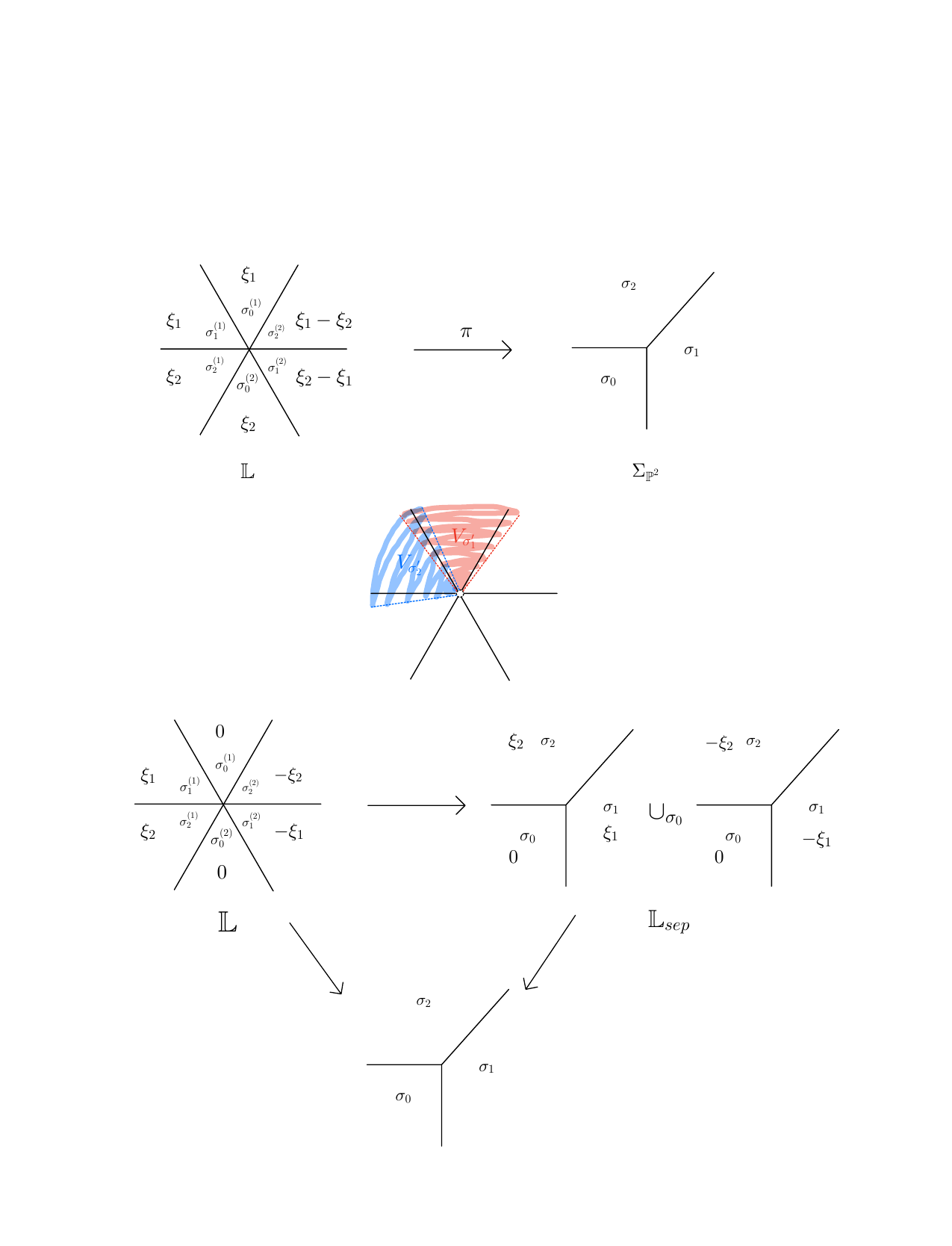}
	    \caption{The canonical separation $\bb{L}_{sep}$ of $\bb{L}$.}
		\label{fig:separation}
    \end{figure}
	\end{example}
	
	\begin{definition}
	The tropical Lagrangian multi-section $\bb{L}_{sep}$ constructed in the proof of Proposition \ref{prop:seperated} is called the \emph{canonical separation of $\bb{L}$}.
	\end{definition}
	
	\begin{construction}
    There are three natural operations on tropical Lagrangian multi-sections. As we will see in Proposition \ref{prop:operations}, they correspond to algebraic operations of toric vector bundles.
    \begin{enumerate}
        \item Given $\bb{L}=(L,\Sigma_L,\mu,\pi,\varphi)$, we put $-\bb{L}:=(L,\Sigma_L,\mu,\pi,-\varphi)$, called the \emph{dual of $\bb{L}$}.
        \item Given two tropical Lagrangian multi-sections $\bb{L}_1,\bb{L}_2$ with rank $r_1,r_2$, respectively, we can construct another tropical Lagrangian multi-section $\bb{L}_1\cup_c\bb{L}_2$ by gluing $\tau'\in\Sigma_1',\tau''\in\Sigma_2'$ whenever $\pi_1(\tau')=\pi(\tau'')=\tau$ and $\varphi_1|_{\tau'}=\varphi_1|_{\tau''}$. The domain of $\bb{L}_1\cup_c\bb{L}_2$ is denoted by $L_1\cup_cL_2$ and the quotient map $L_1\sqcup L_2\to L_1\cup_cL_2$ is denoted by $q$. The set of cones is given by
    	$$\Sigma_1'\cup_c\Sigma_2':=\{q(\sigma')\,|\,\sigma'\in\Sigma_1'\sqcup\Sigma_2'\}$$
	    and the multiplicity map is given by
    	$$(\mu_1\cup_c\mu_2)(\ol{\sigma}'):=\sum_{\sigma_1'\in\Sigma_1':q(\sigma_1')=\ol{\sigma}'}\mu_1(\sigma_1')+\sum_{\sigma_2'\in\Sigma_2':q(\sigma_2')=\ol{\sigma}'}\mu_2(\sigma_2')$$
    	In particular, the rank of $\bb{L}_1\cup_c\bb{L}_2$ is $r_1+r_2$. Finally, the piecewise linear function is given by
    	$$(\varphi_1\cup_c\varphi_2)|_{\ol{\sigma}'}=\begin{cases}
    	\varphi_1|_{\sigma_1'} & \text{ if }q(\sigma_1')=\ol{\sigma}'\in q(\Sigma_1'),\\
    	\varphi_2|_{\sigma'} & \text{ if }q(\sigma_2')=\ol{\sigma}'\in q(\Sigma_2').
    	\end{cases}$$
    	It follows from the definition of $q$ that $\varphi_1\cup_c\varphi_2$ is well-defined and continuous. We call the tropical Lagrangian multi-section $\bb{L}_1\cup_c\bb{L}_2$ the \emph{combinatorial union of $\bb{L}_1,\bb{L}_2$}.
    	\item We define the tropical Lagrangian multi-section $\bb{L}_1\times_{\Sigma}\bb{L}_2$ of rank $r_1r_2$ with domain $L_1\times_{|\Sigma|}L_2$, the set of cones $\Sigma_1'\times_{\Sigma}\Sigma_2'$, the multiplicity map
    	$$\sigma_1'\times\sigma_2'\mapsto\mu_1(\sigma_1)\mu_2(\sigma_2)$$
    	and the projection $\sigma_1\times_{\sigma}\sigma_2\mapsto\sigma$. The piecewise linear function is given by
    	$$(x_1,x_2)\mapsto\varphi_1(x_1)+\varphi_2(x_2).$$
    	Finally, denote the canonical separation of $\bb{L}_1\times_{|\Sigma|}\bb{L}_2$ by $\bb{L}_1\times_c\bb{L}_2$, called the \emph{combinatorial fiber product of $\bb{L}_1,\bb{L}_2$}.
    \end{enumerate}
    Note that $\bb{L}_1\cup_c\bb{L}_2,\bb{L}_1\times_c\bb{L}_2$ are always separated by construction.
	\end{construction}
	
	\begin{definition}
	Let $\bb{L},\bb{L}_1,\bb{L}_2$ be tropical Lagrangian multi-sections over $\Sigma$. We say $\bb{L}$ is \emph{combinatorially decomposable} by $\bb{L}_1,\bb{L}_2$ if $\bb{L}$ is combinatorially equivalent to $\bb{L}_1\cup_c\bb{L}_2$. A tropical Lagrangian multi-section is said to be \emph{combinatorially indecomposable} if it is not combinatorially decomposable for all pairs of $\bb{L}_1,\bb{L}_2$.
	\end{definition}
	
	Every tropical Lagrangian multi-section can be combinatorially decomposed into a union of indecomposable ones. However, such decomposition is not unique most of the time.
	
	\begin{example}\label{ex:P2}
	Figure \ref{fig:P2} shows a combinatorial indecomposable tropical multi-section over the fan of $\bb{P}^2$. It is also separated as the piecewise linear function has different slopes along distinct lifts of every ray. This tropical Lagrangian multi-section is in fact the associated branched covering map of cone complexes of $T_{\bb{P}^2}$. See \cite{branched_cover_fan}.
	\begin{figure}[H]
		\centering
		\includegraphics[width=120mm]{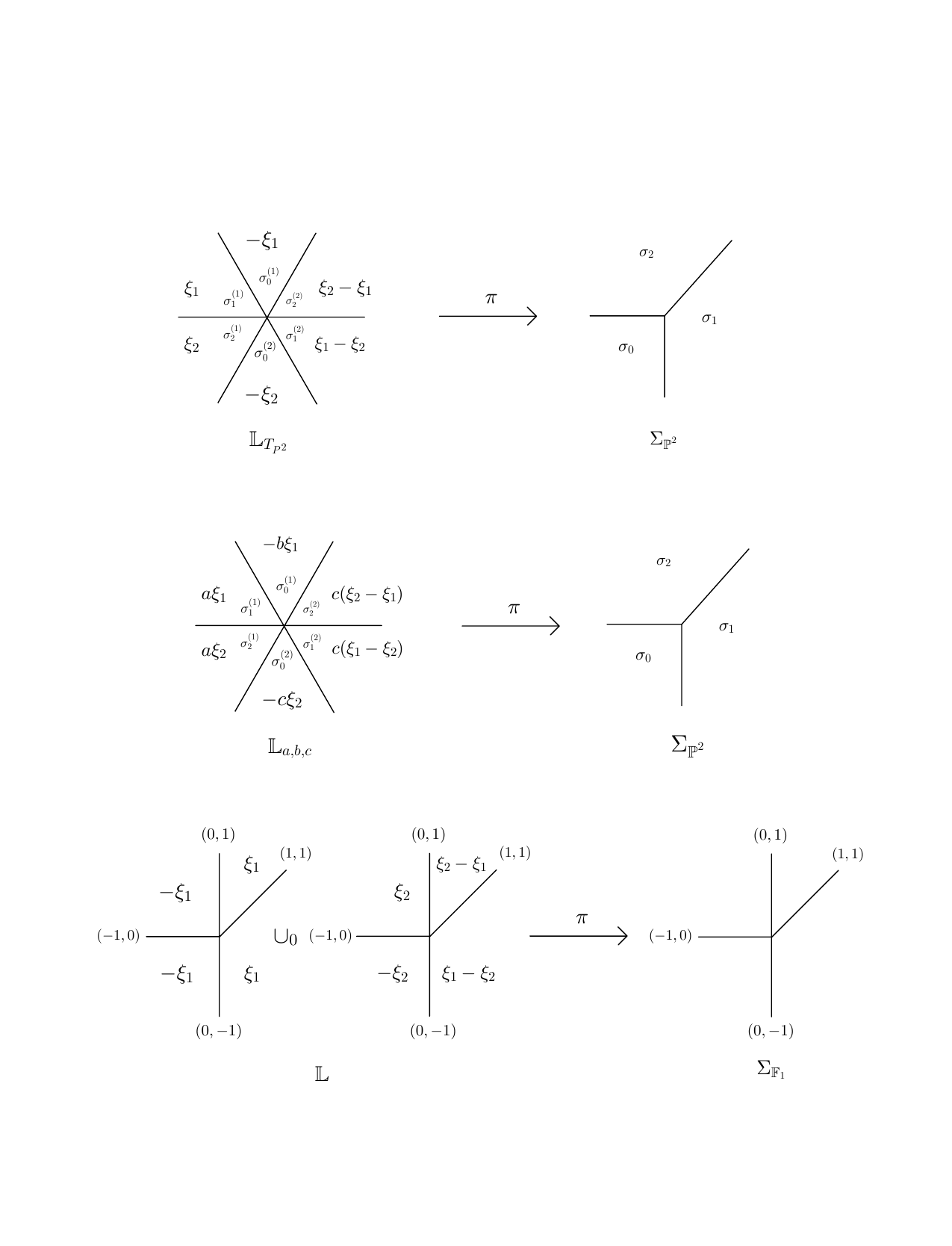}
	    \caption{A combinatorially indecomposable tropical Lagrangian multi-section over the fan of $\bb{P}^2$}
		\label{fig:P2}
    \end{figure}
	\end{example}
	\begin{example}\label{ex:F1}	
    Figure \ref{fig:P2} shows a combinatorial indecomposable tropical Lagrangian multi-section over the fan $\Sigma_{\bb{F}_1}$ of the Hirzebruch surface $\bb{F}_1$. The notation $\cup_0$ stands for gluing the two cone complexes (both are $(\bb{R}^2,\Sigma_{\bb{F}_1})$, but decorated by two different piecewise linear functions) on the left at the origin $0\in N_{\bb{R}}$. Again, it is easy to see that this tropical Lagrangian multi-section is also separated.
    \begin{figure}[H]
		\centering
		\includegraphics[width=160mm]{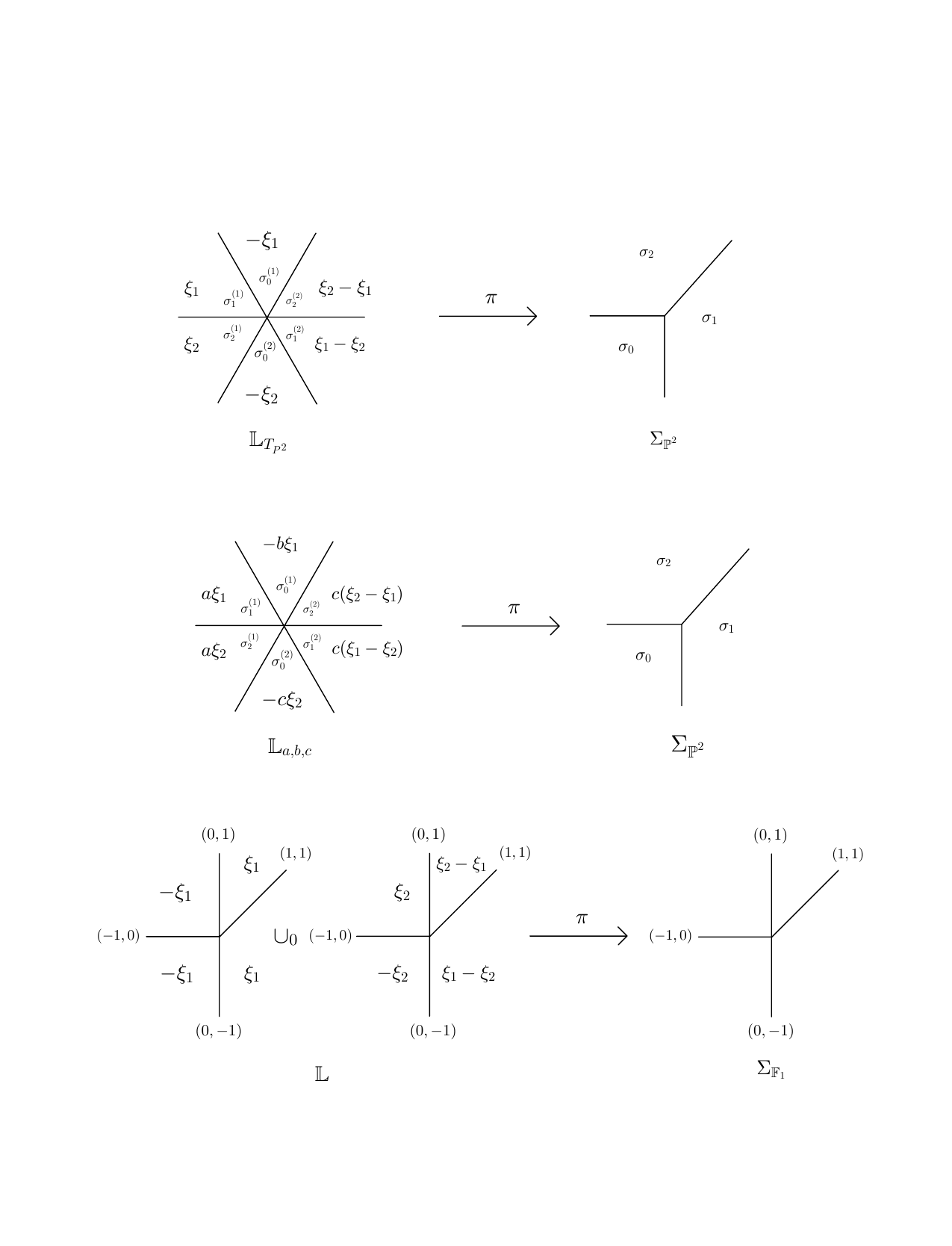}
	    \caption{A combinatorially decomposable tropical Lagrangian multi-section over the fan of $\bb{F}_1$}
		\label{fig:F_1}
    \end{figure}
	\end{example}
	
	As Example \ref{ex:P2} suggests, there is a relation between combinatorially indecomposablity and separability.
	
	\begin{proposition}\label{prop:com_inde_separated}
    	Suppose $\bb{L}$ is combinatorially indecomposable. Then $\bb{L}$ is $(n-1)$-separated and the ramification locus of $\pi:L\to N_{\bb{R}}$ lies in the codimension 2 strata $L^{(n-2)}$ of $(L,\Sigma_L)$. When $\dim(N_{\bb{R}})=2$, the converse is true with the stronger assumption that $\bb{L}$ is maximal. 
	\end{proposition}
	\begin{proof}
	   We first prove that combinatorially indecomposability implies $(n-1)$-separability under the assumption $S'(\bb{L})\subset L^{(n-2)}$. Suppose $\bb{L}$ is not $(n-1)$-separated, that is, there exists $\tau\in\Sigma(n-1)$ and distinct lifts $\tau^{(\alpha)},\tau^{(\beta)}\in\Sigma_L$ such that $\varphi|_{\tau^{(\alpha)}}=\varphi|_{\tau^{(\beta)}}$. Choose a loop $\gamma:[0,1]\to N_{\bb{R}}\backslash S(\bb{L})$ so that $\gamma(0)=\gamma(1)\in\Int(\tau)$ and it goes into the interior of each maximal cone once and transverse to the codimension 1 strata. By concatenating $\gamma$ with itself and using the path lifting lemma, we obtain a lift $\gamma':[0,1]\to L\backslash S'(\bb{L})$ of $\gamma$ so that $\gamma'(0)\in\Int(\tau^{(\alpha)})$ and $\gamma'(1)\in\Int(\tau^{(\beta)})$. Let
	   \begin{align*}
	       \Sigma_{\gamma'}^{(1)}:=&\{\sigma'\in\Sigma_L:\Int(\sigma')\cap\gamma'\neq\emptyset\},\\
	       \til{L}_{\gamma'}^{(1)}:=&\bigcup_{\sigma'\in\Sigma_{\gamma'}(n)}\sigma'\subset L.
	   \end{align*}
	   Then there is a cone complex $(L_{\gamma'}^{(1)},\Sigma_{L_{\gamma'}^{(1)}})$ obtained by gluing $\tau^{(\alpha)},\tau^{(\beta)}$. Denote the quotient map by $q_1:\til{L}_{\gamma'}^{(1)}\to L_{\gamma'}^{(1)}$. By considering $\Sigma_{\gamma'}^{(2)}=(\Sigma_L\backslash\Sigma_{\gamma'}^{(1)})\cup\{\tau^{(\alpha)},\tau^{(\beta)}\}$ and gluing $\tau^{(\alpha)},\tau^{(\beta)}$, we obtain another cone complex $(L_{\gamma'}^{(2)},\Sigma_{L_{\gamma'}^{(2)}})$ and a quotient map $q_2$. There are two obvious projections $\pi_{\gamma'}^{(i)}:L_{\gamma'}^{(i)}\to N_{\bb{R}}$. We take $\mu_{L_{\gamma'}^{(i)}}:=Tr_{q_i}(\mu|_{\til{L}_{\gamma'}^{(i)}})$ to make $\pi_{\gamma'}^{(i)}$'s into branch covering maps. The function $\varphi|_{\til{L}_{\gamma'}^{(i)}}$ descends to $L_{\gamma'}^{(i)}$ and turn them into two tropical Lagrangian multi-sections $\bb{L}_{\gamma'}^{(1)},\bb{L}_{\gamma'}^{(2)}$. It is then clear that $\bb{L}=\bb{L}_{\gamma'}^{(1)}\cup_c\bb{L}_{\gamma'}^{(2)}$.
	   
	   Now we handle the general case. Suppose $S'(\bb{L})\not\subset L^{(n-2)}$. Then there is a codimension 1 cone $\tau\in\Sigma(n-1)$ such that $\tau\subset S(\bb{L})$. Pass to a cover $f:\bb{L}'\to\bb{L}$ such that $S'(\bb{L}')$ lies in the codimension 2 strata of $(L',\Sigma_{L'})$. Then $\tau$ has two distinct lifts $\tau^{(\alpha)},\tau^{(\beta)}\in\Sigma_{L'}(n-1)$ such that $f^*\varphi|_{\tau^{(\alpha)}}=f^*\varphi|_{\tau^{(\beta)}}$. Hence $\bb{L}'$ is not $(n-1)$-separated and hence combinatorially decomposable. But $\bb{L}'$ is combinatorially equivalent to $\bb{L}$ and so $\bb{L}$ is also combinatorially decomposable.
	   
	   For the converse, note that $1$-separability of $\bb{L}$ implies any covering morphism of the form $\bb{L}\to\bb{L}'$ is an isomorphism. Indeed, if $f:\bb{L}\to\bb{L}'$ is not injective, there exists distinct $\tau^{(\alpha)},\tau^{(\beta)}\in\Sigma_L(1)$ so that $f(\sigma^{(\alpha)})=f(\sigma^{(\beta)})$. This implies $\varphi|_{\tau^{(\alpha)}}=\varphi|_{\tau^{(\beta)}}$. As $\tau^{(\alpha)}\neq\tau^{(\beta)}$, this contradicts separability. However, maximality of $\bb{L}$ also implies all covering morphism of the form $\bb{L}'\to\bb{L}$ is an isomorphism. Therefore, if $\bb{L}$ is combinatorially decomposable, say by $\bb{L}_1,\bb{L}_2$, then $\bb{L}\cong\bb{L}_1\cup_c\bb{L}_2$, which violate maximality.
	\end{proof}
	
	\begin{remark}
	The converse of Proposition \ref{prop:com_inde_separated} is not true without the maximality assumption. For example, let $\Sigma$ be the fan of $\bb{P}^2$ and $\varphi_0,\varphi_1$ be the piecewise linear functions correspond to $\cu{O}_{\bb{P}^2},\cu{O}_{\bb{P}^2}(D_1+D_2-2D_0)$, where $D_0,D_1,D_2$ are invariant divisors. Then $\bb{L}_i:=(N_{\bb{R}},\Sigma,1,id_{N_{\bb{R}}},\varphi_i)$, $i=0,1$ are tropical Lagrangian multi-sections. Then it is easy to see that $\bb{L}_0\cup_c\bb{L}_1$ is separated with the zero cone being the only ramification point. It is obvious that $\bb{L}_0\cup_c\bb{L}_1$ is combinatorially decomposable by $\bb{L}_0,\bb{L}_1$.
	\end{remark}

	\subsection{From  toric  vector  bundles  to  tropical  Lagrangian  multi-sections}\label{sec:bundle_lag}
	
	Let $X_{\Sigma}$ be the associated toric variety of $\Sigma$. Given a rank $r$ toric vector bundle $\cu{E}$ on $X_{\Sigma}$, we can associate a rank $r$ tropical Lagrangian multi-section $\bb{L}_{\cu{E}}$ over $\Sigma$ by following the construction in \cite{branched_cover_fan}.
	
	Let $\sigma\in\Sigma$ and $U(\sigma)$ be the affine toric variety corresponding to $\sigma$. The toric vector bundle splits equivariantly on $U(\sigma)$ as
	$$\cu{E}|_{U(\sigma)}\cong\bigoplus_{m(\sigma)\in\textbf{m}(\sigma)}\cu{L}_{m(\sigma)},$$
	where $\textbf{m}(\sigma)\subset M(\sigma):=M/(\sigma^{\perp}\cap M)$ is a multi-set and $\cu{L}_{m(\sigma)}$ is the line bundle corresponds to the linear function $m(\sigma)\in M(\sigma)$. We define $\bb{L}_{\cu{E}}$ as follows. Let
	Let $|\Sigma|\to\Sigma$ be the map given by mapping $x\in|\Sigma|$ to the unique cone $\sigma\in\Sigma$ such that $x\in \Int(\sigma)$. Equip $\Sigma$ with the quotient topology. Define
	$$\Sigma_{\cu{E}}:=\{(\sigma,m(\sigma))\,|\,\sigma\in\Sigma, m(\sigma)\in\textbf{m}(\sigma)\}$$
	and $\Sigma_{\cu{E}}\to\Sigma$ be the projection 
	$$(\sigma,m(\sigma))\mapsto\sigma.$$
	We emphasize that although $\textbf{m}(\sigma)$ is a multi-set, $\Sigma_{\cu{E}}$ is not. Equip $\Sigma_{\cu{E}}$ a poset structure
	$$(\sigma_1,m(\sigma_1))\leq (\sigma_2,m(\sigma_2))\Longleftrightarrow\sigma_1\subset\sigma_2\text{ and }m(\sigma_2)|_{\sigma_1}=m(\sigma_1)$$
	and equip it with the poset topology, namely, a subset $K\subset\Sigma_L$ is closed if and only if $$\{(\sigma_1,m(\sigma_1))\,|\,(\sigma_1,m(\sigma_1))\leq(\sigma_2,m(\sigma_2))\}\subset K$$
	for all $(\sigma_2,m(\sigma_2))\in K$. Define
	$$L_{\cu{E}}:=|\Sigma|\times_{\Sigma}\Sigma_{\cu{E}}.$$
	Let the set of cones on $L_{\cu{E}}$ be $\Sigma\times_{\Sigma}\Sigma_{\cu{E}}\cong\Sigma_{\cu{E}}$. The multiplicity $\mu_{\cu{E}}:L_{\cu{E}}\to\bb{Z}_{>0}$ is defined by
	$$\mu_{\cu{E}}(\sigma,m(\sigma)):=\text{number of times that }m(\sigma)\text{ appears in }{\bf{m}}(\sigma).$$
	The projection map $\pi_{\cu{E}}:L_{\cu{E}}\to|\Sigma|$ then induces a rank $r$ branched covering map of cone complexes $\pi_{\cu{E}}:(L_{\cu{E}},\Sigma_{\cu{E}},\mu_{\cu{E}})\to(N_{\bb{R}},\Sigma)$. The piecewise linear function $\varphi_{\cu{E}}:L_{\cu{E}}\to\bb{R}$ is tautologically given by
	$$\varphi_{\cu{E}}|_{(\sigma,m(\sigma))}:=\pi_{\cu{E}}^*m(\sigma).$$
	This gives a tropical Lagrangian multi-section $\bb{L}_{\cu{E}}:=(L_{\cu{E}},\Sigma_{\cu{E}},\mu_{\cu{E}},\pi_{\cu{E}},\varphi_{\cu{E}})$.
 
    \begin{proposition}\label{prop:L_E_separated}
    The tropical Lagrangian multi-section $\bb{L}_{\cu{E}}$ is separated.
    \end{proposition}
    \begin{proof}
        By construction, if $\omega^{(\alpha)},\omega^{(\beta)}\in\Sigma_{\cu{E}}$ are distinct lifts of some $\omega\in\Sigma$, then $\varphi_{\cu{E}}|_{\omega^{(\alpha)}}\neq\varphi_{\cu{E}}|_{\omega^{(\alpha)}}$. In particular, slopes on different codimension 1 cones are different.
    \end{proof}
    
    \begin{proposition}\label{prop:operations}
    Let $\cu{E},\cu{E}_1,\cu{E}_2$ be toric vector bundles on $X_{\Sigma}$. Then we have
    \begin{enumerate}
        \item $\bb{L}_{\cu{E}^*}=-\bb{L}_{\cu{E}}$.
        \item $\bb{L}_{\cu{E}_1\oplus\cu{E}_2}=\bb{L}_{\cu{E}_1}\cup_c\bb{L}_{\cu{E}_2}$.
        \item $\bb{L}_{\cu{E}_1\otimes\cu{E}_2}=\bb{L}_{\cu{E}_1}\times_c\bb{L}_{\cu{E}_2}$.
    \end{enumerate}
    \end{proposition}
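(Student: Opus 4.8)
The plan is to verify each of the three identities by directly unwinding the two sides of the equation and checking that the resulting cone complexes, multiplicity maps, projections, and piecewise linear functions literally coincide. The common engine is the equivariant splitting $\cu{E}|_{U(\sigma)}\cong\bigoplus_{m(\sigma)\in\mathbf{m}(\sigma)}\cu{L}_{m(\sigma)}$ over each affine chart, so the key input is how the multi-set $\mathbf{m}(\sigma)$ behaves under dualization, direct sum, and tensor product. For part (1), the splitting of $\cu{E}^*|_{U(\sigma)}$ is $\bigoplus_{m(\sigma)}\cu{L}_{-m(\sigma)}$, so $\mathbf{m}_{\cu{E}^*}(\sigma)=-\mathbf{m}_{\cu{E}}(\sigma)$; hence $\Sigma_{\cu{E}^*}$ is identified with $\Sigma_{\cu{E}}$ via $(\sigma,m(\sigma))\mapsto(\sigma,-m(\sigma))$, this identification respects the poset order (since $m(\sigma_2)|_{\sigma_1}=m(\sigma_1)$ iff $(-m(\sigma_2))|_{\sigma_1}=-m(\sigma_1)$), it is compatible with the projections to $\Sigma$, it preserves $\mu$, and it negates $\varphi_{\cu{E}}$ fiberwise. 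So $\bb{L}_{\cu{E}^*}=(L_{\cu{E}},\Sigma_{\cu{E}},\mu_{\cu{E}},\pi_{\cu{E}},-\varphi_{\cu{E}})=-\bb{L}_{\cu{E}}$.

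For part (2), the splitting $(\cu{E}_1\oplus\cu{E}_2)|_{U(\sigma)}\cong\bigoplus_{m\in\mathbf{m}_1(\sigma)}\cu{L}_m\oplus\bigoplus_{m\in\mathbf{m}_2(\sigma)}\cu{L}_m$ gives $\mathbf{m}_{\cu{E}_1\oplus\cu{E}_2}(\sigma)=\mathbf{m}_1(\sigma)\sqcup\mathbf{m}_2(\sigma)$ as multi-sets. On the other side, $\bb{L}_{\cu{E}_1}\cup_c\bb{L}_{\cu{E}_2}$ is built by taking the disjoint union of $L_{\cu{E}_1}$ and $L_{\cu{E}_2}$ and gluing two cones $\tau'\in\Sigma_{\cu{E}_1},\tau''\in\Sigma_{\cu{E}_2}$ precisely when they lie over the same $\tau\in\Sigma$ and $\varphi_{\cu{E}_1}|_{\tau'}=\varphi_{\cu{E}_2}|_{\tau''}$, i.e. when they record the same linear function. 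Thus a cone of $\bb{L}_{\cu{E}_1}\cup_c\bb{L}_{\cu{E}_2}$ over $\sigma$ is exactly a distinct element of the underlying set of $\mathbf{m}_1(\sigma)\sqcup\mathbf{m}_2(\sigma)$, and the $\cup_c$-multiplicity sums the multiplicities of the two pieces — which is exactly $\mu_{\cu{E}_1\oplus\cu{E}_2}$. One then checks the poset structures, projections, and the tautological PL functions agree; this is where I would be slightly careful, because $\cup_c$ by construction produces a separable object, so I must invoke Proposition \ref{prop:L_E_separable} to know $\bb{L}_{\cu{E}_1\oplus\cu{E}_2}$ is already separable and hence the gluing on the left is ``already done''.

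For part (3), $(\cu{E}_1\otimes\cu{E}_2)|_{U(\sigma)}\cong\bigoplus_{m_1\in\mathbf{m}_1(\sigma)}\bigoplus_{m_2\in\mathbf{m}_2(\sigma)}\cu{L}_{m_1+m_2}$, so $\mathbf{m}_{\cu{E}_1\otimes\cu{E}_2}(\sigma)$ is the multi-set of pairwise sums $m_1(\sigma)+m_2(\sigma)$. On the other side, $\bb{L}_{\cu{E}_1}\times_{|\Sigma|}\bb{L}_{\cu{E}_2}$ has cones $\sigma_1'\times_\sigma\sigma_2'$ with multiplicity $\mu_1(\sigma_1')\mu_2(\sigma_2')$ and PL function $\varphi_{\cu{E}_1}(x_1)+\varphi_{\cu{E}_2}(x_2)$, which over $\sigma$ records the linear function $m_1(\sigma)+m_2(\sigma)$; passing to the canonical separation $\times_c$ collapses lifts that carry the same linear function and sums their multiplicities, producing exactly the multi-set $\mathbf{m}_{\cu{E}_1\otimes\cu{E}_2}(\sigma)$ with its natural multiplicities. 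Again one checks the order relations, the projection, and $\varphi$ match. The main obstacle I anticipate is purely bookkeeping: making sure the various quotient/identification steps (the poset topology on $\Sigma_{\cu{E}}$, the $\cup_c$-gluing, and the canonical separation) are applied consistently and that the resulting cone complexes are honestly isomorphic as weighted cone complexes over $(N_{\bb{R}},\Sigma)$ — not merely combinatorially equivalent. Since $\bb{L}_{\cu{E}}$ is always separable (Proposition \ref{prop:L_E_separable}), and both $\cup_c$ and $\times_c$ land among separable objects, there is no discrepancy and the identities hold on the nose.
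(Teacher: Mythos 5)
Your proposal is correct and follows essentially the same route as the paper: the paper's proof simply records the induced equivariant structures on $\cu{E}^*$, $\cu{E}_1\oplus\cu{E}_2$ and $\cu{E}_1\otimes\cu{E}_2$, which is precisely the statement that the character multi-sets ${\bf{m}}(\sigma)$ transform by negation, disjoint union, and pairwise sums on each chart — the key input you identify. Your additional care about separability (invoking Proposition \ref{prop:L_E_separable} so that the $\cup_c$ and $\times_c$ gluings match $\bb{L}_{\cu{E}_1\oplus\cu{E}_2}$ and $\bb{L}_{\cu{E}_1\otimes\cu{E}_2}$ on the nose rather than merely up to equivalence) is a worthwhile detail the paper leaves implicit.
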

    \begin{proof}
        They follow from the induced equivariant structure:
        \begin{align*}
            \lambda\cdot f:=&\,f(\lambda^{-1}\cdot v),\\
            \lambda\cdot(v_1\oplus v_2):=&\,(\lambda\cdot v_1)\oplus(\lambda\cdot v_2),\\
            \lambda\cdot(v_1\otimes v_2):=&\,(\lambda\cdot v_1)\otimes(\lambda\cdot v_2),
        \end{align*}
        where $f\in\cu{E}^*,v\in\cu{E},v_1\in\cu{E}_1,v_2\in\cu{E}_2$.
    \end{proof}
	
	The assignment $\cu{E}\mapsto\bb{L}_{\cu{E}}$ is not injective as the following example shows.
	
	\begin{example}\label{eg:TP2}
	Consider the toric vector bundles $\cu{E}_1:=\bigoplus\limits_{i=1}^2\cu{O}_{\bb{P}^2}(D_i)$ and $\cu{E}_2:=T_{\bb{P}^2}\oplus\cu{O}_{\bb{P}^2}$. Via the Euler sequence
	$$0\to\cu{O}_{\bb{P}^2}\to\bigoplus_{i=1}^2\cu{O}_{\bb{P}^2}(D_i)\to T_{\bb{P}^2}\to 0,$$
	$\cu{E}_1,\cu{E}_2$ share the same equivariant Chern class and hence $\bb{L}_{\cu{E}_1}=\bb{L}_{\cu{E}_2}$ by Proposition 3.4 of \cite{branched_cover_fan}. This example also shows that combinatorially indecomposable components are not unique. Indeed, we have $\bb{L}_{\cu{E}_1}=\bb{L}_{\cu{O}_{\bb{P}^2}(D_0)}\cup_c\bb{L}_{\cu{O}_{\bb{P}^2}(D_1)}\cup_c\bb{L}_{\cu{O}_{\bb{P}^2}(D_2)}$, $\bb{L}_{\cu{E}_2}=\bb{L}_{\cu{O}_{\bb{P}^2}}\cup_c\bb{L}_{T_{\bb{P}^2}}$ and it is easy to see that $\bb{L}_{T_{\bb{P}^2}}$ is maximal and separated, hence combinatorially indecomposable.
	\end{example}

	\section{Kaneyama's classification via SYZ-type construction}\label{sec:reconsturction}
	
	\subsection{Kaneyama's classification}
	
	We first rewrite Kaneyama's classification result in terms of the language of tropical Lagrangian multi-sections. By doing so, some properties of toric vector bundles can be read off from the tropical Lagrangian multi-sections.
	
	In \cite{Kaneyama_classification}, Kaneyama classified toric vector bundles by both combinatorial and linear algebra data. We can rewrite and refine these data in terms of the language of tropical Lagrangian multi-sections. Let $\bb{L}=(L,\Sigma_L,\mu,\pi,\varphi)$ be a tropical Lagrangian multi-section over $\Sigma$. For a maximal cone $\sigma'\in\Sigma_L$, we use the notation $m(\sigma')$ to denote the slope of $\varphi$ on $\sigma'$, which is an element in $M$. We also count lifts of a maximal cone with multiplicities (recall that each cone $\sigma'\in\Sigma_L$ has a multiplicity $\mu(\sigma')$).
	
    \begin{definition}\label{def:compatible}
    Let $\bb{L}$ be a tropical Lagrangian multi-section of rank $r$ over $\Sigma$. A \emph{Kaneyama data of $\bb{L}$} is a collection ${\bf{g}}:=\{g_{\sigma_1\sigma_2}\}_{\sigma_1,\sigma_2\in\Sigma_L(n)}\subset GL(r,\bb{C})$ such that
    \begin{enumerate}
        \item [(G1)] For any $\sigma\in\Sigma(n)$, we have $g_{\sigma\sigma}=Id$
        \item [(G2)] For any $\sigma_1,\sigma_2\in\Sigma(n)$, the $(\alpha,\beta)$-entry $g_{\sigma_1^{(\alpha)}\sigma_2^{(\beta)}}$ of $g_{\sigma_1\sigma_2}$ is non-zero only if $\sigma_1^{(\alpha)}\cap\sigma_2^{(\beta)}\neq\emptyset$ and  $$m(\sigma_1^{(\alpha)})-m(\sigma_2^{(\beta)})\in (\sigma_1\cap\sigma_2)^{\vee}\cap M.$$
	    \item [(G3)] For any $\sigma_1,\sigma_2,\sigma_3\in\Sigma(n)$, we have
	    $$g_{\sigma_1\sigma_2}g_{\sigma_2\sigma_3}=g_{\sigma_1\sigma_3}.$$
    \end{enumerate}
    We denote by $\til{\cu{K}}(\bb{L})$ the set of Kaneyama data on $\bb{L}$. Two Kaneyama data ${\bf{g}},{\bf{g}}'\in\til{\cu{K}}(\bb{L})$ is said to be \emph{equivalent} if for any $\sigma\in\Sigma(n)$, there exists $h_{\sigma}:=(h_{\sigma^{(\alpha)}\sigma^{(\beta)}})\in GL(r,\bb{C})$ such that
    \begin{enumerate}
        \item [(H1)] $h_{\sigma^{(\alpha)}\sigma^{(\beta)}}\neq 0$ only if
        $$m(\sigma^{(\alpha)})-m(\sigma^{(\beta)})\in\sigma^{\vee}\cap M$$
        \item [(H2)] For any $\sigma_1,\sigma_2\in\Sigma(n)$,
        $$h_{\sigma_2}g_{\sigma_1\sigma_2}=g_{\sigma_1\sigma_2}'h_{\sigma_1}.$$
    \end{enumerate}
    We denote by $\cu{K}(\bb{L})$ the set of equivalence classes of Kaneyama data on $\bb{L}$.
    \end{definition}
    
    \begin{remark}
    In Kaneyama's work \cite{Kaneyama_classification}, Pages 74-75, conditions (i) and (i') there are equivalent to continuity of $\varphi$, conditions (ii) is equivalent to (G1),(G2),(G3) and Condition (iii) is equivalent to (H1),(H2).
	\end{remark}
	
	\begin{theorem}[A Reformulation of \cite{Kaneyama_classification}, Theorem 4.2]\label{thm:Lag_bundle}
    Let $\bb{L}$ be a tropical Lagrangian multi-section over $\Sigma$. If $\bb{L}$ admits a Kaneyama data ${\bf{g}}$, then there is a toric vector bundle $\cu{E}(\bb{L},{\bf{g}})$ over $X_{\Sigma}$ such that $\bb{L}_{\cu{E}(\bb{L},{\bf{g}})}\leq\bb{L}$. Two Kaneyama data ${\bf{g}},{\bf{g}}'\in\til{\cu{K}}(\bb{L})$ are equivalent if and only if $\cu{E}(\bb{L},{\bf{g}})\cong\cu{E}(\bb{L},{\bf{g}}')$ as toric vector bundles.
    \end{theorem}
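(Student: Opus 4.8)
The plan is to construct $\cu{E}(\bb{L},{\bf{g}})$ directly by gluing trivial bundles over the affine charts using the matrix data ${\bf{g}}$, and then to verify the equivariance and the two claimed properties. First I would fix, for each maximal cone $\sigma\in\Sigma(n)$ and each lift $\sigma^{(\alpha)}$ of $\sigma$, the character $m(\sigma^{(\alpha)})\in M$ given by the slope of $\varphi$; these characters specify a $(\bb{C}^{\times})^n$-action on $U(\sigma)\times\bb{C}^r$ that is diagonal on fibers, namely $\lambda\cdot(z,t_1,\dots,t_r)=(\lambda\cdot z,\lambda^{m(\sigma^{(1)})}t_1,\dots,\lambda^{m(\sigma^{(r)})}t_r)$. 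Using (G2), I would then set, on the overlap $U(\sigma_1\cap\sigma_2)$,
\[
G_{\sigma_1\sigma_2}^{(\alpha\beta)}(z):=g_{\sigma_1^{(\alpha)}\sigma_2^{(\beta)}}\,z^{m(\sigma_1^{(\alpha)})-m(\sigma_2^{(\beta)})},
\]
which is a regular function on $U(\sigma_1\cap\sigma_2)$ precisely because (G2) forces the exponent to lie in $(\sigma_1\cap\sigma_2)^{\vee}\cap M$ whenever the coefficient is nonzero. Condition (G1) gives $G_{\sigma\sigma}=\mathrm{Id}$ and (G3) gives the cocycle identity $G_{\sigma_1\sigma_2}G_{\sigma_2\sigma_3}=G_{\sigma_1\sigma_3}$ on triple overlaps, so these glue to a genuine rank $r$ vector bundle $\cu{E}(\bb{L},{\bf{g}})$ on $X_{\Sigma}$; compatibility of the monomial factors $z^{m(\sigma_1^{(\alpha)})-m(\sigma_2^{(\beta)})}$ with the chosen characters makes the local $(\bb{C}^{\times})^n$-actions agree on overlaps, so the bundle is toric. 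The step that needs a little care is checking that $G_{\sigma_1\sigma_2}$ is actually invertible as a map of sheaves over $U(\sigma_1\cap\sigma_2)$ rather than merely invertible generically — but this follows formally from (G1) and (G3) by taking $\sigma_3=\sigma_1$, which exhibits $G_{\sigma_2\sigma_1}$ as a two-sided inverse built from the same kind of monomial data.

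Next I would identify the associated tropical Lagrangian multi-section. By the construction of $\bb{L}_{\cu{E}}$ in Section \ref{sec:bundle_lag}, the equivariant splitting of $\cu{E}(\bb{L},{\bf{g}})$ over $U(\sigma)$ is $\bigoplus_{\alpha}\cu{L}_{m(\sigma^{(\alpha)})|_{\sigma}}$, so over each cone $\sigma$ the fiber $\pi_{\cu{E}}^{-1}(\mathrm{Int}(\sigma))$ records exactly the multiset $\{m(\sigma^{(\alpha)})|_{\sigma}\}$ with $\varphi_{\cu{E}}$ equal to the corresponding linear function. Two lifts $\sigma^{(\alpha)},\sigma^{(\beta)}$ of $\bb{L}$ with the same restriction of $\varphi$ to a face $\tau$ get glued in $\bb{L}_{\cu{E}(\bb{L},{\bf{g}})}$, which is precisely the relation defining the canonical separation and, more generally, exhibits a surjective morphism of cone complexes $f:L\to L_{\cu{E}(\bb{L},{\bf{g}})}$ with $\pi_{\cu{E}}\circ f=\pi$, $Tr_f(\mu)=\mu_{\cu{E}}$, and $f^*\varphi_{\cu{E}}=\varphi$. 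Hence $\bb{L}_{\cu{E}(\bb{L},{\bf{g}})}\leq\bb{L}$ in the sense of Definition \ref{def:order}. (In particular $\bb{L}_{\cu{E}(\bb{L},{\bf{g}})}$ is separable, consistent with Proposition \ref{prop:L_E_separable}.)

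For the equivalence statement I would argue in both directions. Given an isomorphism $\Phi:\cu{E}(\bb{L},{\bf{g}})\to\cu{E}(\bb{L},{\bf{g}}')$ of toric vector bundles, it must respect the $(\bb{C}^{\times})^n$-weights, so on each chart $U(\sigma)$ it is given by a matrix $h_\sigma\in GL(r,\bb{C})$ whose $(\alpha,\beta)$-entry is multiplied by $z^{m(\sigma^{(\alpha)})-m(\sigma^{(\beta)})}$; regularity over $U(\sigma)$ is exactly condition (H1), and the intertwining relation $\Phi\circ G_{\sigma_1\sigma_2}=G'_{\sigma_1\sigma_2}\circ\Phi$ on overlaps unwinds to (H2), $h_{\sigma_2}g_{\sigma_1\sigma_2}=g'_{\sigma_1\sigma_2}h_{\sigma_1}$. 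Conversely, given $\{h_\sigma\}$ satisfying (H1)–(H2), reading the same formulas backwards produces a globally well-defined equivariant bundle isomorphism. The main obstacle in the whole argument is bookkeeping: one must consistently track the indexing of lifts (with multiplicity), confirm that every monomial appearing really is regular on the relevant affine chart — which is where (G2) and (H1) are used in an essential way — and check that the chosen characters $m(\sigma^{(\alpha)})$ on different charts patch to a single equivariant structure rather than just a collection of local ones; once these compatibility checks are in place, the cocycle condition (G3) and the identities (G1), (H1), (H2) make all the gluings and the claimed isomorphisms formal.
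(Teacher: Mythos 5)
Your proposal is correct and follows essentially the same route as the paper: glue the equivariant line-bundle sums $\bigoplus_\alpha\cu{L}_{m(\sigma^{(\alpha)})}$ via the monomial transition matrices (with (G1)--(G3) giving identity, regularity, cocycle, and invertibility), define the comparison map $\sigma'\mapsto\pi(\sigma')\times\{m(\sigma')\}$ to get $\bb{L}_{\cu{E}(\bb{L},{\bf{g}})}\leq\bb{L}$, and unwind an equivariant isomorphism into the data (H1)--(H2). The only difference is that the paper delegates the last assertion to Kaneyama's classification, whereas you sketch that argument directly; both are fine.
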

    \begin{proof}
        The $(\bb{C}^{\times})^n$-action on the toric vector bundle $\cu{E}_{\sigma}=\bigoplus\limits_{\alpha=1}^r\cu{L}_{m(\sigma^{(\alpha)})}$ on $U(\sigma)$ is given by
        \begin{equation}\label{eqn:equ_str}
            \lambda\cdot (p,1(\sigma^{(\alpha)})):=(\lambda\cdot p,\lambda^{m(\sigma^{(\alpha)})}1(\sigma^{(\alpha)})),
        \end{equation}
        where $p\in U(\sigma)$ and $1(\sigma^{(\alpha)})$ is an equivariant holomorphic frame of $\cu{L}_{m(\sigma^{(\alpha)})}$. It is a straightforward checking that this action is compatible with the transition maps $$G_{\sigma_1\sigma_2}:1(\sigma_1^{(\alpha)})\mapsto\sum_{\beta=1}^rg_{\sigma_1^{(\alpha)}\sigma_2^{(\beta)}}z^{m(\sigma_1^{(\alpha)})-m(\sigma_2^{(\beta)})}1(\sigma_2^{(\beta)}).$$
        To prove that $\bb{L}_{\cu{E}(\bb{L},{\bf{g}})}\leq\bb{L}$, we define
        $$f_{\sigma'}:\sigma'\to\pi(\sigma')\times\{m(\sigma')\}.$$
        By continuity of $\varphi$, $\{f_{\sigma'}\}_{\sigma'\in\Sigma_L}$ can be glued to a continuous map $f:L\to L_{\cu{E}(\bb{L},{\bf{g}})}$ which maps cones in $\Sigma_L$ to cones in $\Sigma_{\cu{E}(\bb{L},{\bf{g}})}$ homeomorphically. By definition, $f^*\varphi_{\cu{E}(\bb{L},{\bf{g}})}=\varphi$ and for any $\sigma\times\{m(\sigma)\}$, we have
        $$Tr_f(\mu)(\sigma\times\{m(\sigma)\})=\sum_{\sigma':\varphi|_{\sigma'}=m(\sigma)}\mu(\sigma')=\#\{m\in{\bf{m}}(\sigma):m=m(\sigma)\}=\mu_{\cu{E}(\bb{L},{\bf{g}})}(\sigma\times\{m(\sigma)\}).$$
        Hence $\bb{L}_{\cu{E}(\bb{L},{\bf{g}})}\leq\bb{L}$ via $f$. The last assertion follows from Condition (iii) in \cite{Kaneyama_classification}.
    \end{proof}

	Suppose $\bb{L}$ admits a Kaneyama data ${\bf{g}}$. The composition
	$$\bb{L}\mapsto\cu{E}(\bb{L},{\bf{g}})\mapsto\bb{L}_{\cu{E}(\bb{L},{\bf{g}})}$$
	may not be the identity map. For instance, suppose $\pi:L\to N_{\bb{R}}$ is a 2-fold cover conjugate to the square map $z\mapsto z^2$ on $\bb{C}$. Let $\Sigma$ be the fan of $\bb{P}^2$. Then there is a natural collection of cones $\Sigma'$ on $L$. Equip $L$ with the 0 function. Then, the Kaneyama data ${\bf{g}}$ there gives a rank $2$ toric vector bundle, which is just $\cu{O}_{\bb{P}^2}^{\oplus 2}$ with the trivial equivariant structure. But it is clear that the associated tropical Lagrangian multi-section of $\cu{O}_{\bb{P}^2}^{\oplus 2}$ is given by $(N_{\bb{R}},\Sigma,\mu,id_{N_{\bb{R}}},0)$, with $\mu(\sigma)=2$. Nevertheless, the map $\pi:L\to N_{\bb{R}}$ gives a branched covering of cone complexes that preserve the function. More generally, we have the following
	
	\begin{theorem}\label{thm:combin_equiv}
	Let $\bb{L}_1,\bb{L}_2$ be tropical Lagrangian multi-sections of the same rank $r$. If $\bb{L}_1,\bb{L}_2$ are combinatorially equivalent, then there exists a bijection $f_*:\cu{K}(\bb{L}_1)\to\cu{K}(\bb{L}_2)$ such that $\cu{E}(\bb{L}_1,{\bf{g}}_1)\cong\cu{E}(\bb{L}_2,f_*({\bf{g}}_1))$ as toric vector bundles. Conversely, if $\cu{E}(\bb{L}_1,{\bf{g}}_1)\cong\cu{E}(\bb{L}_2,{\bf{g}}_2)$ for some Kaneyama data, then $\bb{L}_1$ is combinatorially equivalent to $\bb{L}_2$.
	\end{theorem}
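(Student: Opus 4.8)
The plan is to prove the two implications separately. For the converse --- assuming $\cu{E}(\bb{L}_1,{\bf{g}}_1)\cong\cu{E}(\bb{L}_2,{\bf{g}}_2)$ as toric vector bundles --- I would first note that the construction $\cu{E}\mapsto\bb{L}_{\cu{E}}$ of Section \ref{sec:bundle_lag} uses only the multisets ${\bf{m}}(\sigma)$ of characters appearing in the equivariant splittings $\cu{E}|_{U(\sigma)}\cong\bigoplus\cu{L}_{m(\sigma)}$, and these are invariants of the equivariant isomorphism class of $\cu{E}$; hence $\bb{L}_{\cu{E}(\bb{L}_1,{\bf{g}}_1)}=\bb{L}_{\cu{E}(\bb{L}_2,{\bf{g}}_2)}=:\bb{L}_0$. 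By Theorem \ref{thm:Lag_bundle} one then has $\bb{L}_0\leq\bb{L}_1$, $\bb{L}_0\leq\bb{L}_2$ and $rk(\bb{L}_0)=r$, so $\bb{L}_0$ is a common lower bound of $\bb{L}_1$ and $\bb{L}_2$; thus $\bb{L}_1\sim_c\bb{L}_2$ and a fortiori $\bb{L}_1$ is combinatorially equivalent to $\bb{L}_2$.

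For the forward direction I would reduce twice. Composing the bijections --- and the induced bundle isomorphisms --- along a chain $\bb{L}_1=\bb{L}_1'\sim_c\cdots\sim_c\bb{L}_k'=\bb{L}_2$ of connected rank-$r$ objects reduces the problem to a single step $\bb{L}_1\sim_c\bb{L}_2$, i.e.\ to the case of a common lower bound $\bb{L}\leq\bb{L}_1$, $\bb{L}\leq\bb{L}_2$. Here $\bb{L}$ is automatically connected, since the defining morphism $L_i\to L$ is a continuous surjection out of the connected space $L_i$; so it suffices to construct, for every $\bb{L}\leq\bb{L}'$ with $\bb{L},\bb{L}'$ connected, a bijection $\cu{K}(\bb{L}')\to\cu{K}(\bb{L})$ under which the associated toric vector bundles are isomorphic, and then compose the two such maps coming from $\bb{L}\leq\bb{L}_1$ and $\bb{L}\leq\bb{L}_2$. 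To build it, fix $f\colon L'\to L$ realizing $\bb{L}\leq\bb{L}'$ (so $\pi\circ f=\pi'$, $Tr_f(\mu')=\mu$, $f^*\varphi=\varphi'$) and use that a branched covering morphism carries maximal cones homeomorphically onto maximal cones: then $f$ sends each lift of $\sigma\in\Sigma(n)$ in $\Sigma_{L'}$ to a lift in $\Sigma_L$ of the same $\varphi$-slope, with multiplicities adding, so for each $\sigma$ the multiset of $r$ slopes over $\sigma$ is the same for $\bb{L}$ as for $\bb{L}'$. The key observation is that, for a connected tropical Lagrangian multi-section, the set of Kaneyama data depends only on $\Sigma$ and on these slope multisets: (G1), (G3), (H1), (H2) involve only the matrices, the slope clause of (G2) involves only the slopes and the cone $\sigma_1\cap\sigma_2\in\Sigma$, and the clause $\sigma_1^{(\alpha)}\cap\sigma_2^{(\beta)}\neq\emptyset$ in (G2) holds automatically because for connected $\bb{L}$ the fibre $\pi^{-1}(0)$ is a single point contained in every maximal cone of $\Sigma_L$. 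Choosing for each $\sigma$ a slope-preserving bijection between the $r$ slots of $\bb{L}$ over $\sigma$ and those of $\bb{L}'$ over $\sigma$, encoded by a permutation matrix $P_\sigma\in GL(r,\bb{C})$, the relabeling ${\bf{g}}=\{g_{\sigma_1\sigma_2}\}\mapsto\{P_{\sigma_1}^{-1}g_{\sigma_1\sigma_2}P_{\sigma_2}\}$ should be a bijection $\til{\cu{K}}(\bb{L}')\to\til{\cu{K}}(\bb{L})$ respecting (G1)--(G3), descending to $\cu{K}(\bb{L}')\to\cu{K}(\bb{L})$ (via $h_\sigma\mapsto P_\sigma^{-1}h_\sigma P_\sigma$), and sending $\cu{E}(\bb{L}',{\bf{g}})$ to the toric vector bundle obtained from it by the constant change of equivariant frame $\{P_\sigma\}$, hence to an isomorphic one. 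Combining over the chain yields the desired $f_*$.

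The hard part will be organizational rather than conceptual: once the observation that Kaneyama data see only the fan and the slope multisets --- both preserved along $\leq$ --- is isolated, the statement is essentially forced, but one still has to set up the permutations $P_\sigma$ so that lifts counted with multiplicity are matched consistently, and check that (G1)--(G3) and the equivalence relation (H1)--(H2) transform as claimed, which requires tracking the matrix-composition conventions of the transition cocycle with care. A minor point to dispose of is that the intermediate $\bb{L}$ in $\bb{L}_1\sim_c\bb{L}_2$ need not admit any Kaneyama data; this is harmless, as $f_*$ is then a bijection between empty sets.
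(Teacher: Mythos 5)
Your proposal is correct and follows essentially the same route as the paper: the converse is deduced from Theorem \ref{thm:Lag_bundle} exactly as in the text, and the forward direction reduces to a single relation $\bb{L}\leq\bb{L}'$ and transports Kaneyama data via slope-preserving matchings of lifts, with different matchings differing by a permutation of the equivariant frame. Your explicit observation that connectedness makes the clause $\sigma_1^{(\alpha)}\cap\sigma_2^{(\beta)}\neq\emptyset$ in (G2) automatic is a small clarification the paper leaves implicit, but it does not change the argument.
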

	\begin{proof}
	   It suffices to prove that if $\bb{L}_2\leq\bb{L}_1$ via some $f$, then any Kaneyama data of $\bb{L}_1$ gives a Kaneyama data of $\bb{L}_2$ such that their associated toric vector bundles are the same and vice versa. Let $\sigma_1',\sigma_2'\in\Sigma_1'(n)$ be maximal cones. By the assumption $f^*\varphi_2=\varphi_1$, we have
	   $$m(f(\sigma_1'))-m(f(\sigma_2'))=m(\sigma_1')-m(\sigma_2').$$
	   Moreover, counting with multiplicity, $f$ induces a permutation of the index set $\{1,\dots,r\}$, which parametrizes lifts of a maximal cell. Thus if ${\bf{g}}$ is a Kaneyama data of $\bb{L}_1$, then we can simply define
	   $$(f_*g)_{f(\sigma_1^{(\alpha)})f(\sigma_2^{(\beta)})}:=g_{\sigma_1^{(\alpha)}\sigma_2^{(\beta)}},$$
	   where $\sigma_1^{(\alpha)},\sigma_2^{(\beta)}$ are preimage of $f(\sigma_1^{(\alpha)}),f(\sigma_2^{(\beta)})$ such that $m(f(\sigma_1^{(\alpha)}))=m(\sigma_1^{(\alpha)}),m(f(\sigma_2^{(\beta)}))=m(\sigma_2^{(\beta)})$. Although the lifts $\sigma_1^{(\alpha)},\sigma_2^{(\beta)}$ are not unique, the slopes are and hence $f_*g$ is well-defined. It is straightforward to check that $f_*({\bf{g}}):=\{(f_*g)_{\sigma_1\sigma_2}\}$ is a Kaneyama data for $\bb{L}_2$ and any two choices of preimages above differ by a permutation of the equivariant frame $\{1(\sigma^{(\alpha)})\}_{\alpha=1}^r$ and the torus action is preserved. It is then easy to see that there is an isomorphism $\cu{E}(\bb{L}_1,{\bf{g}})\cong\cu{E}(\bb{L}_2,f_*({\bf{g}}))$ of toric vector bundles. By pulling back, Kaneyama data on $\bb{L}_2$ induces a Kaneyama data on $\bb{L}_1$. Modulo equivalence, we obtain the desired bijection. The converse follows from Theorem \ref{thm:Lag_bundle}.
	\end{proof}
	
	\begin{remark}
    Theorem \ref{thm:combin_equiv} has the following analog in mirror symmetry. Non-Hamiltonian equivalent Lagrangian branes in a symplectic manifold may give rise to the same mirror object as they can still be isomorphic in the derived Fukaya category. For example, in \cite{CS_SYZ_imm_Lag}, Example 5.5 gives a Lagrangian immersion and a Lagrangian embedding in a symplectic 2-torus that shares the same mirror sheaf.
	\end{remark}

	\begin{proposition}\label{prop:embedding}
	Suppose $\bb{L}=\bb{L}_1\cup_c\bb{L}_2$. Then there exists an embedding $\cu{K}(\bb{L}_1)\times\cu{K}(\bb{L}_2)\to\cu{K}(\bb{L})$.
	\end{proposition}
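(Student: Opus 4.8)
The plan is to build the embedding directly from the block-diagonal structure that $\bb{L}=\bb{L}_1\cup_c\bb{L}_2$ forces on maximal cones. Recall that the combinatorial union glues a lift $\tau'\in\Sigma_1'$ to a lift $\tau''\in\Sigma_2'$ exactly when $\pi_1(\tau')=\pi_2(\tau'')$ and the slopes agree. In particular, over each $\sigma\in\Sigma(n)$ the set of lifts $\sigma^{(1)},\dots,\sigma^{(r_1+r_2)}$ of $\sigma$ in $\Sigma_1'\cup_c\Sigma_2'$ decomposes (counted with multiplicity) as the disjoint union of the $r_1$ lifts coming from $\bb{L}_1$ and the $r_2$ lifts coming from $\bb{L}_2$; if a lift from $\bb{L}_1$ and one from $\bb{L}_2$ happen to be identified by $q$, we still treat the corresponding frame vector as carried once by each summand, so the index set $\{1,\dots,r_1+r_2\}$ is genuinely partitioned into an $\bb{L}_1$-block $I_1(\sigma)$ and an $\bb{L}_2$-block $I_2(\sigma)$. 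Thus I would first fix, for every $\sigma\in\Sigma(n)$, such a decomposition of frame indices, compatible with the projections $L_i\to L_1\cup_cL_2$.

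Next, given Kaneyama data $\mathbf{g}^{(1)}=\{g^{(1)}_{\sigma_1^{(\alpha)}\sigma_2^{(\beta)}}\}\in\til{\cu{K}}(\bb{L}_1)$ and $\mathbf{g}^{(2)}\in\til{\cu{K}}(\bb{L}_2)$, I would define $\mathbf{g}:=\mathbf{g}^{(1)}\oplus\mathbf{g}^{(2)}$ on $\bb{L}$ by setting $g_{\sigma_1^{(\alpha)}\sigma_2^{(\beta)}}:=g^{(i)}_{\sigma_1^{(\alpha)}\sigma_2^{(\beta)}}$ whenever $\alpha\in I_i(\sigma_1)$ and $\beta\in I_i(\sigma_i)$ for the same $i$, and $g_{\sigma_1^{(\alpha)}\sigma_2^{(\beta)}}:=0$ when $\alpha,\beta$ lie in blocks of different colour. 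One then checks (G1)–(G3): (G1) is immediate since each $\mathbf{g}^{(i)}$ satisfies it; (G2) holds because the slope condition $m(\sigma_1^{(\alpha)})-m(\sigma_2^{(\beta)})\in(\sigma_1\cap\sigma_2)^\vee\cap M$ is inherited from $\bb{L}_i$ on same-colour pairs and is irrelevant on the forced-zero off-diagonal blocks; (G3) is the block-diagonal product rule $g_{\sigma_1\sigma_2}g_{\sigma_2\sigma_3}=g_{\sigma_1\sigma_3}$, which splits into the two colour-blocks and there reduces to the cocycle identity for $\mathbf{g}^{(1)}$ and for $\mathbf{g}^{(2)}$ separately. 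Hence $(\mathbf{g}^{(1)},\mathbf{g}^{(2)})\mapsto\mathbf{g}^{(1)}\oplus\mathbf{g}^{(2)}$ gives a map $\til{\cu{K}}(\bb{L}_1)\times\til{\cu{K}}(\bb{L}_2)\to\til{\cu{K}}(\bb{L})$.

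It remains to pass to the quotients $\cu{K}(\bb{L}_i)$ and show the induced map $\cu{K}(\bb{L}_1)\times\cu{K}(\bb{L}_2)\to\cu{K}(\bb{L})$ is well-defined and injective. Well-definedness is easy: if $h_\sigma^{(i)}$ implements an equivalence $\mathbf{g}^{(i)}\sim\mathbf{g}'^{(i)}$, then the block-diagonal gauge $h_\sigma:=h_\sigma^{(1)}\oplus h_\sigma^{(2)}$ satisfies (H1) (each block does, and the slope constraint is vacuous across blocks where we may take the entries zero) and (H2) by the same colour-wise computation as for (G3). For injectivity — which I expect to be the one point needing care — suppose $\mathbf{g}^{(1)}\oplus\mathbf{g}^{(2)}$ and $\mathbf{g}'^{(1)}\oplus\mathbf{g}'^{(2)}$ are equivalent in $\til{\cu{K}}(\bb{L})$ via some $h_\sigma\in GL(r_1+r_2,\bb{C})$. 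The point is that conjugation by $h_\sigma$ must respect the two colour-blocks: by construction all off-diagonal ($\bb{L}_1$-to-$\bb{L}_2$) entries of every transition matrix vanish, so writing $h_\sigma$ in block form and examining (H2) forces the off-diagonal blocks of $h_\sigma$ to vanish as well — here one uses connectedness of each $\bb{L}_i$ together with completeness of $\Sigma$ so that there is "enough" transition data to propagate the vanishing across all of $\Sigma(n)$, exactly as in the proof that $\bb{L}_i$ connected forces the relevant index set to be permuted blockwise. Once $h_\sigma$ is block-diagonal, its two diagonal blocks are gauge transformations exhibiting $\mathbf{g}^{(i)}\sim\mathbf{g}'^{(i)}$ for $i=1,2$, so the classes in $\cu{K}(\bb{L}_1)\times\cu{K}(\bb{L}_2)$ coincide. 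This establishes the embedding.
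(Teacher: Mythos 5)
Your construction of the map is exactly the paper's: the paper's entire proof is the single sentence that the embedding is given by taking direct sums of matrices, and your block-diagonal definition of ${\bf{g}}^{(1)}\oplus{\bf{g}}^{(2)}$ together with the verification of (G1)--(G3) and of well-definedness under block-diagonal gauges is a correct expansion of that sentence.

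The gap is in the injectivity step, precisely where you say care is needed. Condition (H2) does \emph{not} force the off-diagonal blocks of $h_{\sigma}$ to vanish: writing $h_{\sigma}$ in block form with $(1,2)$-block $B_{\sigma}$, the corresponding block of (H2) reads $B_{\sigma_2}\,g^{(2)}_{\sigma_1\sigma_2}={g'}^{(1)}_{\sigma_1\sigma_2}\,B_{\sigma_1}$, which together with (H1) says exactly that $\{B_{\sigma}\}$ is an equivariant homomorphism $\cu{E}(\bb{L}_2,{\bf{g}}^{(2)})\to\cu{E}(\bb{L}_1,{\bf{g}}'^{(1)})$. Such homomorphisms are typically nonzero (already for $\cu{O}$ and $\cu{O}(D)$ on $\bb{P}^1$), and connectedness of the $\bb{L}_i$ only propagates the intertwining relation across $\Sigma(n)$, not vanishing. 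What injectivity actually needs is a cancellation statement: if the two direct sums are related by \emph{some} gauge, then the factors are separately equivalent; by Theorem \ref{thm:Lag_bundle} this is a Krull--Schmidt-type assertion about toric vector bundles, not a statement about the shape of $h_{\sigma}$. Moreover the naive direct-sum map can genuinely fail to be injective: if $\bb{L}_1=\bb{L}_2=\bb{L}_0$ and $\cu{K}(\bb{L}_0)$ contains two distinct classes $[a]\neq[b]$ (Example \ref{eg:TP2} supplies such an $\bb{L}_0$), then $([a],[b])$ and $([b],[a])$ have the same image, since the block-swap matrix satisfies (H1) (the slopes of the two copies agree) and exhibits ${\bf{g}}^{(1)}\oplus{\bf{g}}^{(2)}\sim{\bf{g}}^{(2)}\oplus{\bf{g}}^{(1)}$; equivalently, the two direct-sum bundles are literally isomorphic. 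So your argument cannot be completed as written without additional hypotheses separating $\bb{L}_1$ from $\bb{L}_2$ --- though, to be fair, the paper's one-line proof does not address this point either.
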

	\begin{proof}
	    The embedding is given by taking the direct sum of matrices.
	\end{proof}
	
	Every tropical Lagrangian multi-section can be combinatorially decomposed into combinatorially indecomposable ones. By Proposition \ref{prop:embedding}, to obtain Kaneyama data on a general tropical Lagrangian multi-section, it suffices to consider its combinatorially indecomposable components.
	
	\begin{theorem}\label{thm:L_cid_E_id}
	If $\bb{L}$ is combinatorially indecomposable, then $\cu{E}(\bb{L},{\bf{g}})$ is indecomposable for any Kaneyama data ${\bf{g}}$ of $\bb{L}$. The converse is also true if $\bb{L}$ can be decomposed into a combinatorial union of two tropical Lagrangian multi-sections $\bb{L}_1,\bb{L}_2$ that admits Kaneyama data.
	\end{theorem}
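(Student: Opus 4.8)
The plan is to prove the two implications separately, in both cases translating statements about (in)decomposability of $\cu{E}(\bb{L},{\bf{g}})$ into statements about combinatorial (in)decomposability of $\bb{L}$ via the dictionary established in Proposition \ref{prop:operations}, Proposition \ref{prop:embedding} and Theorem \ref{thm:combin_equiv}. For the forward direction, suppose for contradiction that $\bb{L}$ is combinatorially indecomposable but $\cu{E}:=\cu{E}(\bb{L},{\bf{g}})$ splits as $\cu{E}_1\oplus\cu{E}_2$ with both summands nonzero. The first point is that a direct sum decomposition of a \emph{toric} vector bundle can be taken to be equivariant: the isotypic decomposition with respect to the $(\bb{C}^\times)^n$-action refines any algebraic splitting on each affine chart, so one may assume $\cu{E}_1,\cu{E}_2$ are toric subbundles. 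Then by Proposition \ref{prop:operations}(2), $\bb{L}_{\cu{E}}=\bb{L}_{\cu{E}_1}\cup_c\bb{L}_{\cu{E}_2}$. By Theorem \ref{thm:Lag_bundle}, $\bb{L}_{\cu{E}}\leq\bb{L}$, so $\bb{L}_{\cu{E}}$ is combinatorially equivalent to $\bb{L}$; hence $\bb{L}$ is combinatorially equivalent to $\bb{L}_{\cu{E}_1}\cup_c\bb{L}_{\cu{E}_2}$, i.e.\ combinatorially decomposable, a contradiction — \emph{provided} both $\bb{L}_{\cu{E}_i}$ are connected. This connectedness is the crux of the forward direction and must be argued: if, say, $\bb{L}_{\cu{E}_1}$ had two connected components, each component would itself underlie a toric vector bundle (restrict $\cu{E}_1$ to the sub-local-system of its equivariant frames over that component), giving a finer splitting of $\cu{E}$ and eventually of $\bb{L}$ into connected pieces, still contradicting combinatorial indecomposability. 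So the forward implication reduces to bookkeeping once the equivariant-splitting and connectedness points are in place.

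For the converse, assume $\bb{L}=\bb{L}_1\cup_c\bb{L}_2$ with $\bb{L}_1,\bb{L}_2$ connected and each admitting a Kaneyama data, and assume $\bb{L}$ is combinatorially \emph{decomposable}; we must produce a nontrivial splitting of $\cu{E}(\bb{L},{\bf{g}})$. By Proposition \ref{prop:embedding} the pair $({\bf{g}}_1,{\bf{g}}_2)\in\cu{K}(\bb{L}_1)\times\cu{K}(\bb{L}_2)$ embeds into $\cu{K}(\bb{L})$ as the block-diagonal data ${\bf{g}}_1\oplus{\bf{g}}_2$, and tautologically $\cu{E}(\bb{L},{\bf{g}}_1\oplus{\bf{g}}_2)\cong\cu{E}(\bb{L}_1,{\bf{g}}_1)\oplus\cu{E}(\bb{L}_2,{\bf{g}}_2)$, which is decomposable since both $\bb{L}_i$ are nonzero (positive rank). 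The remaining issue is that the statement is about an \emph{arbitrary} Kaneyama data ${\bf{g}}$ on $\bb{L}$, not necessarily the block-diagonal one; but the intended reading (cf.\ the phrasing ``$\bb{L}$ can be decomposed into a combinatorial union... that admits Kaneyama data'') is exactly that decomposability of $\bb{L}$ witnesses decomposability of \emph{some} associated bundle, so it suffices to exhibit one such ${\bf{g}}$, namely ${\bf{g}}_1\oplus{\bf{g}}_2$. (If instead one wants: decomposability of $\cu{E}(\bb{L},{\bf{g}})$ for the \emph{given} ${\bf{g}}$, then one invokes the forward direction contrapositively: if $\cu{E}(\bb{L},{\bf{g}})$ were indecomposable, the forward implication just proved, applied with the roles swapped, would force... — but this is circular, so I would stick to the ``some ${\bf{g}}$'' reading.)

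The main obstacle I anticipate is the connectedness argument in the forward direction: translating an algebraic direct sum $\cu{E}=\cu{E}_1\oplus\cu{E}_2$ into a \emph{combinatorial} decomposition of $\bb{L}$ into \emph{connected} tropical Lagrangian multi-sections, since the definition of combinatorial decomposability (via $\sim_c$ and combinatorial equivalence of connected objects) insists on connectivity of the pieces. I would handle this by first decomposing $\cu{E}$ fully into indecomposable toric summands $\cu{E}=\bigoplus_j\cu{E}_j$ (Krull--Schmidt), observing that each $\bb{L}_{\cu{E}_j}$ decomposes into connected components $\bb{L}_{\cu{E}_j}=\bigsqcup_\ell\bb{L}_{j,\ell}$, noting that each connected piece $\bb{L}_{j,\ell}$ is again of the form $\bb{L}_{\cu{E}'}$ for a toric bundle $\cu{E}'$ on $X_\Sigma$ (the restriction of $\cu{E}_j$ to the corresponding direct summand picked out by the component), and then assembling: $\bb{L}$ is combinatorially equivalent to the iterated combinatorial union $\bigcup_{c}{}_{j,\ell}\,\bb{L}_{j,\ell}$ of connected objects, which is a genuine combinatorial decomposition unless there is only one summand — i.e.\ unless $\cu{E}$ is itself indecomposable with connected $\bb{L}_{\cu{E}}$. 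A secondary subtlety is checking that the combinatorial union $\bb{L}_{\cu{E}_1}\cup_c\bb{L}_{\cu{E}_2}$ appearing from Proposition \ref{prop:operations}(2) is compatible with the partial order $\leq$ on the nose (so that equivalence, not just $\leq$, can be concluded), which follows from the construction of the gluing map $q$ preserving $\varphi$.
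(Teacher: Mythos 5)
Your proposal follows essentially the same route as the paper's proof: the forward direction is the contrapositive via $\bb{L}_{\cu{E}_1\oplus\cu{E}_2}=\bb{L}_{\cu{E}_1}\cup_c\bb{L}_{\cu{E}_2}$ (Proposition \ref{prop:operations}) combined with $\bb{L}_{\cu{E}(\bb{L},{\bf{g}})}\leq\bb{L}$ (Theorem \ref{thm:Lag_bundle}), and the converse exhibits the block-diagonal Kaneyama data from the embedding of Proposition \ref{prop:embedding}, exactly as in the paper. The two points you flag as needing care --- that the algebraic splitting can be taken equivariant, and that the pieces $\bb{L}_{\cu{E}_i}$ must be connected (or further split into connected components) to meet the definition of combinatorial decomposability --- are genuine details the paper's proof passes over silently, and your treatment of them is sound.
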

	\begin{proof}
	   If $\cu{E}(\bb{L},{\bf{g}})$ is decomposable for some ${\bf{g}}$, say by $\cu{E}_1,\cu{E}_2$, then $\bb{L}_{\cu{E}(\bb{L},{\bf{g}})}=\bb{L}_{\cu{E}_1}\cup_c\bb{L}_{\cu{E}_2}$. Since $\bb{L}_{\cu{E}(\bb{L},{\bf{g}})}\leq\bb{L}$ by Theorem \ref{thm:Lag_bundle}, $\bb{L}$ is also combinatorially decomposable. Conversely, suppose $\bb{L}=\bb{L}_1\cup_c\bb{L}_2$ for some unobstructed $\bb{L}_1,\bb{L}_2$. Let ${\bf{g}}_1,{\bf{g}}_2$ be some Kaneyama data of $\bb{L}_1,\bb{L}_2$, respectively. Denote the image of $({\bf{g}}_1,{\bf{g}}_2)$ under the embedding $\cu{K}(\bb{L}_1)\times\cu{K}(\bb{L}_2)\to\cu{K}(\bb{L})$ by ${\bf{g}}$. Then we have $\cu{E}(\bb{L},{\bf{g}})=\cu{E}(\bb{L}_1,{\bf{g}}_1)\oplus\cu{E}(\bb{L}_2,{\bf{g}}_2)$.
	\end{proof}
	
	Since sections ($r=1$) always admit Kaneyama data, we have the following
	
	\begin{corollary}\label{cor:indecomposable_rk2}
	A rank 2 tropical Lagrangian multi-section $\bb{L}$ is combinatorially indecomposable if and only if $\cu{E}(\bb{L},{\bf{g}})$ is indecomposable for any Kaneyama data ${\bf{g}}$ of $\bb{L}$.
	\end{corollary}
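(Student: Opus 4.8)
The plan is to read the corollary off directly from Theorem \ref{thm:L_cid_E_id}, using the fact that in rank $2$ any combinatorial decomposition is forced to be into two rank-$1$ pieces.

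The "only if" direction is immediate: it is exactly the first assertion of Theorem \ref{thm:L_cid_E_id}, which says that combinatorial indecomposability of $\bb{L}$ forces $\cu{E}(\bb{L},{\bf{g}})$ to be indecomposable for every Kaneyama data ${\bf{g}}$, with no hypothesis on the rank. So no new work is needed there.

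For the "if" direction I would argue by contraposition. Assume $\bb{L}$ is combinatorially decomposable, i.e.\ combinatorially equivalent to $\bb{L}_1\cup_c\bb{L}_2$ for some connected tropical Lagrangian multi-sections $\bb{L}_1,\bb{L}_2$. Since $rk(\bb{L}_1\cup_c\bb{L}_2)=rk(\bb{L}_1)+rk(\bb{L}_2)$ and each summand has rank at least $1$, the identity $2=rk(\bb{L})=rk(\bb{L}_1)+rk(\bb{L}_2)$ forces $rk(\bb{L}_1)=rk(\bb{L}_2)=1$. As recorded just before the statement, every rank-$1$ tropical Lagrangian multi-section admits a Kaneyama data (one may take all transition constants equal to $1$: conditions (G1) and (G3) are then trivial, and (G2) collapses to the continuity of $\varphi$, since across $\sigma_1\cap\sigma_2$ the two slopes agree, so $m(\sigma_1)-m(\sigma_2)\in(\sigma_1\cap\sigma_2)^{\perp}\subset(\sigma_1\cap\sigma_2)^{\vee}$). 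Hence the hypothesis of the "converse" half of Theorem \ref{thm:L_cid_E_id} is met, and that half produces a Kaneyama data ${\bf{g}}$ of $\bb{L}$ for which $\cu{E}(\bb{L},{\bf{g}})\cong\cu{E}(\bb{L}_1,{\bf{g}}_1)\oplus\cu{E}(\bb{L}_2,{\bf{g}}_2)$ is decomposable. Thus it is not the case that $\cu{E}(\bb{L},{\bf{g}})$ is indecomposable for all Kaneyama data, which is the contrapositive of the assertion to be proved.

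The only point requiring attention is that the object literally displayed as a combinatorial union is $\bb{L}_1\cup_c\bb{L}_2$, not $\bb{L}$ itself; one transports the Kaneyama data and the resulting bundle isomorphism along the combinatorial equivalence using Theorem \ref{thm:combin_equiv} (equivalently, one combines Proposition \ref{prop:embedding} with Theorem \ref{thm:combin_equiv}). Since Theorem \ref{thm:L_cid_E_id} already phrases its converse for $\bb{L}$ "decomposed into a combinatorial union", this bookkeeping is subsumed in the cited statement, and I do not anticipate any genuine obstacle: the corollary is a formal consequence of the rank-$2$ rank count together with the two cited theorems.
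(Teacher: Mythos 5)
Your proof is correct and follows the same route the paper takes: the corollary is deduced from Theorem \ref{thm:L_cid_E_id} by noting that in rank $2$ any combinatorial decomposition must be into two rank-$1$ pieces, which always admit Kaneyama data (the paper states this fact in the sentence preceding the corollary; you additionally verify it, which is fine). No further comment is needed.
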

	
	\begin{remark}\label{rmk:counter_example}
	The converse of Theorem \ref{thm:L_cid_E_id} or Corollary \ref{cor:indecomposable_rk2} is not true if we just ask for $\cu{E}(\bb{L},{\bf{g}})$ to be indecomposable \emph{for some ${\bf{g}}$}. For instance, take any indecomposable toric vector bundle $\cu{E}$ that contains a toric subbundle. Then $\bb{L}_{\cu{E}}$ is combinatorially decomposable since $\cu{E}$ fits into an exact sequence of toric vector bundles. A concrete example is given by the tangent bundle of the Hirzebruch surface $\bb{F}_1$, which is indecomposable. But it contains a line bundle as a toric subbundle. See Corollary 4.1.2 of \cite{toric_bundle_Bott_tower}.
	\end{remark}
	
	\subsection{A mirror symmetric approach}\label{sec:SYZ}
	
	Now we go into one of the main themes of this paper. We would like to interpret Kaneyama's result in terms of mirror symmetry. We assume from now on all tropical Lagrangian multi-sections are combinatorially indecomposable and hence by Proposition \ref{prop:com_inde_separated}, they are separated and the ramification locus $S'$ always lies in the codimension 2 strata of $(L,\Sigma_L)$.

    \subsubsection{The semi-flat bundle}\label{sec:sf}
    
    For a tropical multi-section $\bb{L}=(L,\Sigma_L,\mu,\pi,\varphi)$, we have denoted the ramification locus by $S'$ and the branch locus by $S$. Both of them are assumed to be contained in the codimension 2 strata. We define the 1-skeleton of $X_{\Sigma}$:
    $$X_{\Sigma}^{(1)}:=\bigcup_{\tau\in\Sigma(n-1)}X_{\tau}=X_{\Sigma}\big\backslash\bigcup_{\dim(\omega)\leq n-1}U(\omega).$$
    The semi-flat bundle is a locally free sheaf on $X_{\Sigma}^{(1)}$. To construct it, we first provide a good open cover for $L\backslash L^{(n-2)}$. For each $\sigma'\in\Sigma_L(n)$, choose a small neighborhood $V_{\sigma'}\subset L\backslash L^{(n-2)}$ contains $\sigma'\backslash L^{(n-2)}$ such that $V_{\sigma_1'}\cap V_{\sigma_2'}\neq\emptyset$ if and only if $\sigma_1'\cap\sigma_2'\in\Sigma_L(n-1)$. See Figure \ref{fig:nbhd_V}.
    \begin{figure}[H]
		\centering
		\includegraphics[width=45mm]{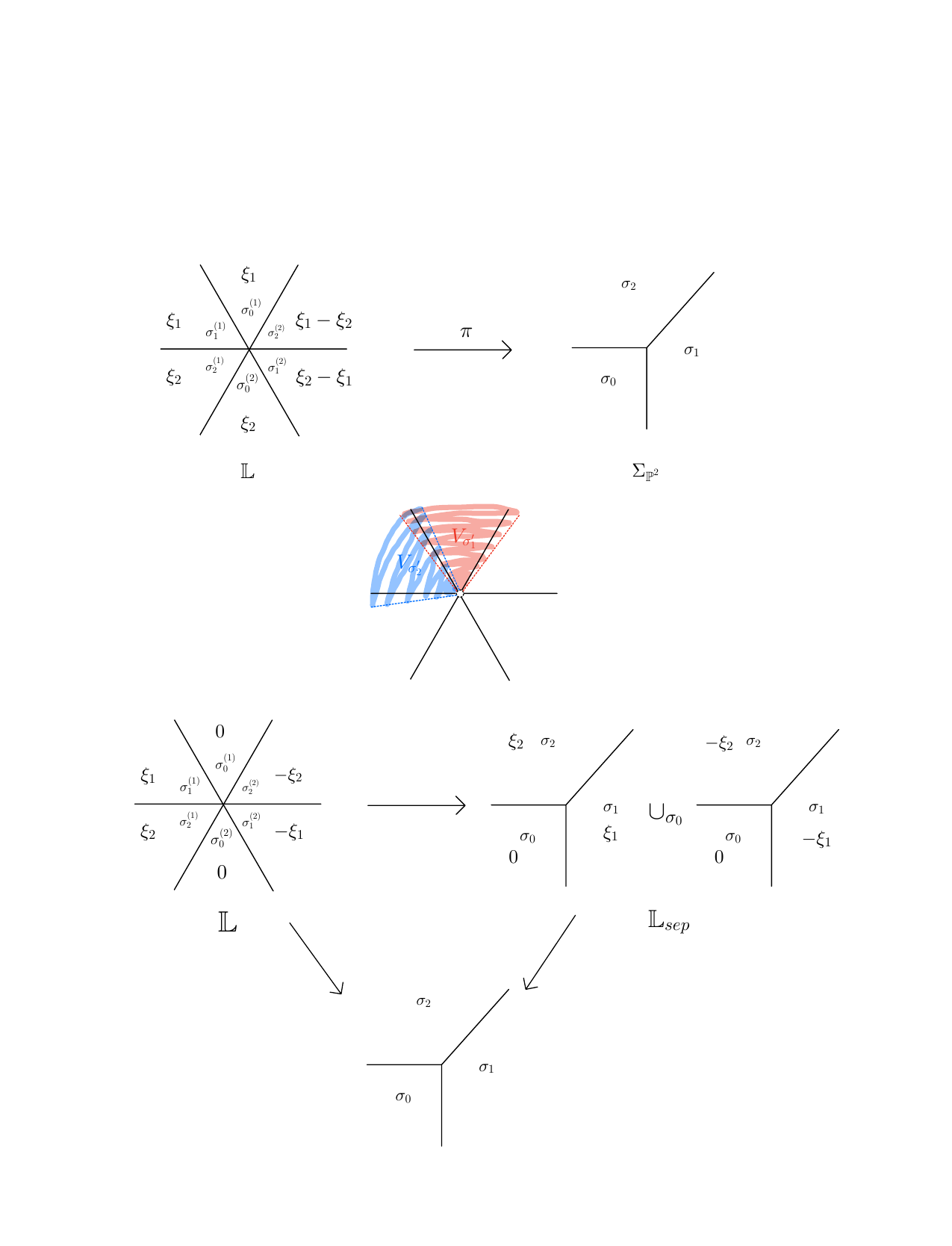}
	    \caption{The space $L\backslash L^{(n-2)}$ and the neighborhoods $V_{\sigma_1'},V_{\sigma_2'}$.}
		\label{fig:nbhd_V}
    \end{figure}
    Choose any $\bb{C}^{\times}$-local system $\cu{L}$ on $L\backslash L^{(n-2)}$. Denote the transition map on $V_{\sigma_1'}\cap V_{\sigma_2'}$ by
    $$1_{\sigma_1'}\mapsto g_{\sigma_1'\sigma_2'}^{sf}1_{\sigma_2'},$$
    where $\sigma_2'\in\Sigma_L(n)$ is the unique lift of $\sigma_2$ such that $\sigma_1'\cap\sigma_2'\in\Sigma_L(n-1)$. For a cone $\sigma\in\Sigma$, let $V(\sigma):=U(\sigma)\cap X_{\Sigma}^{(1)}$. If $\omega\subset S$, then $V(\omega)=\emptyset$. Thus, $\{V(\sigma)\}_{\sigma\in\Sigma(n)}$ forms an open cover of $X_{\Sigma}^{(1)}$ such that if $\sigma_1\cap\sigma_2\in\Sigma(n-1)$, we have $\emptyset\neq V(\sigma_1\cap\sigma_2)\subset X_{\Sigma}^{(1)}$. For a maximal cone $\sigma\in\Sigma(n)$, we put
    $$\cu{E}_{\sigma}:=\bigoplus_{\alpha=1}^r\cu{L}_{m(\sigma^{(\alpha)})},$$
    which is a toric vector bundle defined on $U(\sigma)$. For $\sigma_1,\sigma_2\in\Sigma(n)$ so that $\sigma_1\cap\sigma_2\in\Sigma(n-1)$, we define $G_{\sigma_1\sigma_2}^{sf}:\cu{E}_{\sigma_1}|_{V(\sigma_1\cap\sigma_2)}\to\cu{E}_{\sigma_2}|_{V(\sigma_1\cap\sigma_2)}$ by
    $$G_{\sigma_1\sigma_2}^{sf}:1(\sigma_1^{(\alpha)})\mapsto g_{\sigma_1^{(\alpha)}\sigma_2^{(\beta)}}^{sf}z^{m(\sigma_1^{(\alpha)})-m(\sigma_2^{(\beta)})}1(\sigma_2^{(\beta)}),$$
    where $\sigma_2^{(\beta)}$ is uniquely determined by the conditions $\emptyset\neq\sigma_1^{(\alpha)}\cap\sigma_2^{(\beta)}\in\Sigma_L(n-1)$ and $\pi(\sigma_2^{(\beta)})=\sigma_2$. Since we have no triple intersections, $\{g_{\sigma_1'\sigma_2'}^{sf}\}$ immediately satisfies the cocycle condition.
    
    \begin{definition}
    Let $\bb{L}=(L,\Sigma',\mu,\pi,\varphi)$ be a tropical Lagrangian multi-section over $\Sigma$. Equip $L\backslash L^{(n-2)}$ with a $\bb{C}^{\times}$-local system $\cu{L}$. The vector bundle $\cu{E}^{sf}(\bb{L},\cu{L})$ is called the \emph{semi-flat bundle of $(\bb{L},\cu{L})$}.
    \end{definition}
    
    \subsubsection{Wall-crossing factors}\label{sec:wall}
    
    After constructing the semi-flat bundle $\cu{E}^{sf}(\bb{L},\cu{L})$ of $(\bb{L},\cu{L})$, we would like to extend $\cu{E}^{sf}(\bb{L},\cu{L})$ to the whole space $X_{\Sigma}$. To do this, we may need to correct $G_{\sigma_1\sigma_2}^{sf}$ by certain factors. Let $\tau\in\Sigma(n-1)$ and $\sigma_1,\sigma_2\in\Sigma(n)$ be the unique maximal cones so that $\sigma_1\cap\sigma_2=\tau$. For each $\omega'\in\Sigma_L$, we define a bundle map $N_{\tau}(\omega'):\cu{E}_{\sigma_1}|_{U(\tau)}\to\cu{E}_{\sigma_1}|_{U(\tau)}$ so that with respective to the frame $\{1(\sigma_1^{(\alpha)})\}$, the $(\alpha,\beta)$-entry is given by
	$$N_{\tau,\sigma_1}^{(\alpha\beta)}(\omega'):=\begin{cases}
	n_{\tau,\sigma_1}^{(\alpha\beta)}(\omega')z^{m(\sigma_1^{(\alpha)})-m(\sigma_1^{(\beta)})} & \text{ if }\omega'\subset\sigma_1^{(\alpha)}\cap\sigma_1^{(\beta)},\alpha\neq\beta\text{ and }m(\sigma_1^{(\alpha)})-m(\sigma_1^{(\beta)})\in\tau^{\vee}\cap M\\
	0 & \text{ otherwise},
	\end{cases}$$
	for some $n_{\tau,\sigma_1}^{(\alpha\beta)}(\omega')\in\bb{C}$. Note that $N_{\tau,\sigma_1}(\omega')=0$ if $\omega'\not\subset S'$. Put
    $$S_{\tau}'(\sigma_1):=\{\sigma_1^{(\alpha)}\cap\sigma_1^{(\beta)}\,|\,m(\sigma_1^{(\alpha)})-m(\sigma_1^{(\beta)})\in(\sigma_1\cap\sigma_2)^{\vee}\cap M\}.$$
    By assumption, cones in $S_{\tau}'$ are of codimension $\geq 2$. Moreover, there is a natural bijection $S_{\tau}'(\sigma_1)\cong S_{\tau}'(\sigma_2)$. Indeed, for $\sigma_1^{(\alpha)}\cap\sigma_1^{(\beta)}\in S_{\tau}'$, there exists unique $\sigma_2^{(\alpha')},\sigma_2^{(\beta')}\in\Sigma_L(n)$ so that $\sigma_1^{(\alpha)}\cap\sigma_2^{(\alpha')},\sigma_1^{(\beta)}\cap\sigma_2^{(\beta')}\in\Sigma_L(n-1)$. Then
    $$m(\sigma_2^{(\alpha')})-m(\sigma_2^{(\beta')})=(m(\sigma_2^{(\alpha')})-m(\sigma_1^{(\alpha)}))-(m(\sigma_2^{(\beta')})-m(\sigma_1^{(\beta)}))+(m(\sigma_1^{(\alpha)})-m(\sigma_1^{(\beta)})).$$
    The first two terms of the right-hand side are in $\tau^{\perp}\cap M$ by continuity and the last term is in $\tau^{\vee}\cap M$ by definition. Hence $\sigma_2^{(\alpha')}\cap\sigma_2^{(\beta')}\in S_{\tau}'$. As $\sigma_2^{(\alpha')},\sigma_2^{(\beta')}$ are uniquely determined by $\sigma_1^{(\alpha)},\sigma_1^{(\beta)}$ and vice versa, the assignment $\sigma_1^{(\alpha)}\cap\sigma_1^{(\beta)}\mapsto\sigma_2^{(\alpha')}\cap\sigma_2^{(\beta')}$ gives the desired bijection. Now we define
	$$N_{\tau,\sigma_1}:=\sum_{\omega'\in S_{\tau}'(\sigma_1)}N_{\tau,\sigma_1}(\omega').$$
	
	\begin{remark}
	Note that the change of frame from $\{1(\sigma_1^{(\alpha)})\}$ to $\{1(\sigma_2^{(\alpha)})\}$ is compatible with the bijection $S_{\tau}'(\sigma_1)\cong S_{\tau}'(\sigma_2)$, namely, by choosing suitable $n_{\tau,\sigma_2}^{(\alpha\beta)}(\omega')$, we have $G_{\sigma_1\sigma_2}^{sf}\circ N_{\tau,\sigma_1}(\omega')\circ G_{\sigma_2\sigma_1}^{sf}=N_{\tau,\sigma_2}$. As $N_{\tau,\sigma_1}, N_{\tau,\sigma_2}$ are related by a change of frame, to simplify our notation, we simply right $N_{\tau}(\omega')$ for $N_{\tau,\sigma_1}(\omega')$ and $N_{\tau}$ for $N_{\tau,\sigma_1}$. We will also write $N_{\sigma_1\sigma_2}$ for $N_{\tau}$ when we want to emphasize the maximal cones $\sigma_1,\sigma_2$ so that $\sigma_1\cap\sigma_2=\tau$.
	\end{remark}
	
	\begin{lemma}\label{lem:N_commute}
	    For any $\tau\in\Sigma(n-1)$ and $\omega'\in\Sigma_L$, $N_{\tau}(\omega')$ is nilpotent and for any distinct lifts $\omega',\omega''\in\Sigma_L$ of $\omega\in\Sigma$, $N_{\tau}(\omega')N_{\tau}(\omega'')=0$. In particular, $[N_{\tau}(\omega'),N_{\tau}(\omega'')]=0$, for any lifts $\omega',\omega''\in\Sigma_L$ of $\omega$.
	\end{lemma}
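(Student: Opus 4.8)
**The plan is to prove the three claims in sequence, working entirely with the matrix entries of $N_\tau(\omega')$ as described in the displayed formula.**

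\textbf{Step 1: Nilpotency of $N_\tau(\omega')$.} The idea is that $N_\tau(\omega')$, being essentially a strictly-triangular matrix with respect to a suitable ordering of the lifts, must be nilpotent. More precisely, the $(\alpha,\beta)$-entry is nonzero only if $\alpha\neq\beta$ and $m(\sigma_1^{(\alpha)})-m(\sigma_1^{(\beta)})\in\tau^\vee\cap M$ with $\omega'\subset\sigma_1^{(\alpha)}\cap\sigma_1^{(\beta)}$. I would define a partial (pre-)order on the index set $\{1,\dots,r\}$ by declaring $\beta\prec\alpha$ whenever $m(\sigma_1^{(\alpha)})-m(\sigma_1^{(\beta)})\in\tau^\vee\cap M$. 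Since $\tau\in\Sigma(n-1)$ has $\tau^\vee$ a half-space (its lineality space is $\tau^\perp$, which has codimension $1$), this relation is transitive; moreover, by separability (Proposition \ref{prop:com_inde_separable}), distinct lifts have distinct slopes on $\tau$, so $m(\sigma_1^{(\alpha)})-m(\sigma_1^{(\beta)})\notin\tau^\perp$ unless $\alpha=\beta$ — this makes $\prec$ a genuine strict partial order. Then $N_\tau(\omega')$ is supported on strictly $\prec$-decreasing pairs, so after choosing a linear extension of $\prec$ it becomes strictly upper triangular; hence $N_\tau(\omega')^r=0$. (One must be slightly careful: the $z$-powers $z^{m(\sigma_1^{(\alpha)})-m(\sigma_1^{(\beta)})}$ are regular functions on $U(\tau)$ precisely because the exponents lie in $\tau^\vee\cap M$, so the multiplication of such matrices stays within the same ring and the triangularity argument is legitimate.)

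\textbf{Step 2: Orthogonality $N_\tau(\omega')N_\tau(\omega'')=0$ for distinct lifts of the same $\omega$.} Here the point is a support/codimension obstruction. Each factor $N_\tau(\omega')$ has $(\alpha,\gamma)$-entry supported on pairs with $\omega'\subset\sigma_1^{(\alpha)}\cap\sigma_1^{(\gamma)}$, and $N_\tau(\omega'')$ has $(\gamma,\beta)$-entry supported on pairs with $\omega''\subset\sigma_1^{(\gamma)}\cap\sigma_1^{(\beta)}$. So the $(\alpha,\beta)$-entry of the product is a sum over $\gamma$ with $\omega',\omega''\subset\sigma_1^{(\gamma)}$. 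I would argue that this forces $\omega'=\omega''$: since $\omega'\subset S'$ is a codimension-$\geq 2$ cone and $\pi|_{\sigma_1^{(\gamma)}}$ is an isomorphism onto $\sigma_1$, the face of $\sigma_1^{(\gamma)}$ lying over $\omega$ is unique, so $\omega'$ and $\omega''$ — both being lifts of $\omega$ contained in the single maximal cone $\sigma_1^{(\gamma)}$ — must coincide, contradicting distinctness. Hence no term survives and the product is the zero matrix. I expect this to be the cleanest of the three. The commutator statement $[N_\tau(\omega'),N_\tau(\omega'')]=0$ is then immediate: if $\omega'\neq\omega''$ both products vanish by Step 2, and if $\omega'=\omega''$ the commutator is trivially zero.

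\textbf{Anticipated main obstacle.} The subtle point is justifying that the order $\prec$ in Step 1 is strict and transitive, i.e. that $\tau^\vee\cap M$ behaves like the positive part of a total-ish order on the relevant differences — this rests on $\tau$ being \emph{codimension one} (so $\tau^\vee$ is a half-space, giving transitivity) together with separability (so the only slope-difference in $\tau^\perp$ is zero, giving antisymmetry). Once this is in place, everything reduces to elementary linear algebra with triangular matrices over the coordinate ring $\bb{C}[\tau^\vee\cap M]$. A minor bookkeeping nuisance is that $N_\tau$ is really $N_{\tau,\sigma_1}$ and changing the reference maximal cone conjugates by $G^{sf}_{\sigma_1\sigma_2}$ (Remark above), but since nilpotency, the vanishing of products, and commutativity are all invariant under conjugation, the statements are independent of this choice and we may argue with $\sigma_1$ fixed throughout.
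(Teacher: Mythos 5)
Your proof is correct, and your Step 2 (the vanishing of $N_{\tau}(\omega')N_{\tau}(\omega'')$ for distinct lifts) is exactly the paper's argument: any surviving middle index $\gamma$ would force the single maximal cone $\sigma_1^{(\gamma)}$ to contain both $\omega'$ and $\omega''$, which is impossible since $\pi|_{\sigma_1^{(\gamma)}}$ is injective. Where you genuinely diverge is the nilpotency step. The paper proves by induction that every power $N_{\tau}(\omega')^k$ has zero diagonal entries --- the inductive step being that a nonzero diagonal contribution would make both $z^{m(\sigma_1^{(\alpha)})-m(\sigma_1^{(\beta)})}$ and its inverse regular on $U(\tau)$, forcing $m(\sigma_1^{(\alpha)})-m(\sigma_1^{(\beta)})\in\tau^{\perp}$ and contradicting $(n-1)$-separability --- and then concludes nilpotency (implicitly via the trace criterion). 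You instead turn the same separability fact into the asymmetry of a relation $\prec$ on the index set and triangularize: this is cleaner, gives the explicit bound $N_{\tau}(\omega')^r=0$, and avoids the unstated ``all powers have zero trace $\Rightarrow$ nilpotent'' step. Both arguments ultimately rest on the identical key input, namely that $\tau^{\vee}\cap(-\tau^{\vee})=\tau^{\perp}$ together with $(n-1)$-separability rules out $\alpha\neq\beta$ with both differences in $\tau^{\vee}$.

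One justification in your Step 1 is wrong as stated, though the conclusion it supports is true. You claim $\tau^{\vee}$ is a half-space because its lineality space $\tau^{\perp}$ ``has codimension $1$''; in fact $\tau^{\perp}$ has \emph{dimension} $1$ (codimension $n-1$), so $\tau^{\vee}$ is a half-space only when $n=2$. For $n\geq 3$ it is an $n$-dimensional cone with a $1$-dimensional lineality space. Fortunately transitivity of $\prec$ does not need the half-space property at all: $\tau^{\vee}$ is a convex cone, hence closed under addition, and the slope differences telescope, so $m(\sigma_1^{(\alpha)})-m(\sigma_1^{(\beta)})\in\tau^{\vee}$ and $m(\sigma_1^{(\beta)})-m(\sigma_1^{(\gamma)})\in\tau^{\vee}$ already give $m(\sigma_1^{(\alpha)})-m(\sigma_1^{(\gamma)})\in\tau^{\vee}$. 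Replace the half-space remark with this and the argument is complete.
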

	\begin{proof}
	    We show that $N_{\tau}(\omega')^k=0$ has zero diagonal entries, for all $k\geq 1$. The case $k=1$ is by definition. Assume $N_{\tau}(\omega')^k$ has zero diagonal entries for some $k\geq 1$. If there exists $\alpha$ such that
	    $$\sum_{\beta=1}^r(N_{\tau}(\omega')^k)^{(\alpha\beta)}N_{\tau}^{(\beta\alpha)}(\omega')\neq 0,$$
	    there must exist $\beta\neq\alpha$ such that 
	    $$(N_{\tau}(\omega')^k)^{(\alpha\beta)}N_{\tau}^{(\beta\alpha)}(\omega')\neq 0,$$
	    as both $N_{\tau}(\omega')^k,N_{\tau}(\omega')$ have zero diagonal entries. This implies both $z^{m(\sigma_1^{(\alpha)})-m(\sigma_1^{(\beta)})}$ and $z^{m(\sigma_1^{(\beta)})-m(\sigma_1^{(\alpha)})}$ are regular functions on the affine chart $U(\tau)$. Hence $z^{m(\sigma_1^{(\alpha)})-m(\sigma_1^{(\beta)})}$ must be invertible on $U(\tau)$ and so $m(\sigma_1^{(\alpha)})-m(\sigma_1^{(\beta)})\in\tau^{\perp}$, which means $m(\sigma_1^{(\alpha)})|_{\tau}=m(\sigma_1^{(\beta)})|_{\tau}$. This violates $(n-1)$-separability. Hence $N_{\tau}(\omega')^{k+1}$ has zero diagonal entries too. By induction, we are done. 
	    
	    For the last part, we have
	    $$\sum_{\beta=1}^rn_{\tau}^{(\alpha\beta)}(\omega')n_{\tau}^{(\beta\gamma)}(\omega'')z^{m(\sigma_1^{(\alpha)})-m(\sigma_1^{(\gamma)})}.$$
	    This sum is non-zero only if $\omega'\subset\sigma_1^{(\alpha)}\cap\sigma_1^{(\beta)}$ and $\omega''\subset\sigma_1^{(\beta)}\cap\sigma_1^{(\gamma)}$, for some $\beta$. Then we must have $\omega''=\omega'$ as $\sigma_1^{(\beta)}$ can only contain one lift of $\omega$.
	\end{proof}
	
	Once a choice of $\{n_{\tau,\sigma}^{(\alpha\beta)}(\omega')\}$ is fixed, Lemma \ref{lem:N_commute} allows us to define the product of matrices
	\begin{equation}\label{eqn:theta}
	    \Theta_{\tau}:=\prod_{\omega'\in S_{\tau}'}\Theta_{\tau}(\omega'):=\prod_{\omega'\in S_{\tau}'}\exp(N_{\tau}(\omega'))
	\end{equation}
	unambiguously. Moreover, we have $\det(\Theta_{\tau})=1$ and so $\Theta_{\tau}$ is invertible over $\bb{C}[\tau^{\vee}\cap M]$.
	
	\begin{definition}
	For $\omega'\in\Sigma_L$, the factors $\{\Theta_{\tau}(\omega')\}_{\tau\in\Sigma(n-1)}$ are called \emph{wall-crossing automorphisms associated to $\omega'$}.
	\end{definition}
	
	\begin{remark}
	Similar to the notation $N_{\sigma_1\sigma_2}$, we write $\Theta_{\sigma_1\sigma_2}$ for $\Theta_{\tau}$ when we want to emphasize the unique maximal cones $\sigma_1,\sigma_2$ so that $\sigma_1\cap\sigma_2=\tau$.
	\end{remark}
	
	Now for $\tau\in\Sigma(n-1)$, put
	$$G_{\sigma_1\sigma_2}:=G_{\sigma_1\sigma_2}^{sf}\circ\Theta_{\sigma_1\sigma_2},$$
	where $\sigma_1,\sigma_2\in\Sigma(n)$ are uniquely determined by $\tau=\sigma_1\cap\sigma_2$. If we express $G_{\sigma_1\sigma_2}$ in terms of the frames $\{1(\sigma_1^{(\alpha)})\},\{1(\sigma_2^{(\gamma)})\}$, we have
	$$G_{\sigma_1\sigma_2}:1(\sigma_1^{(\alpha)})\mapsto\sum_{\beta=1}^r\theta_{\tau}^{(\alpha\beta)}g_{\sigma_1^{(\beta)}\sigma_2^{(\gamma)}}^{sf}z^{m(\sigma_1^{(\alpha)})-m(\sigma_2^{(\gamma)})}1(\sigma_2^{(\gamma)}).$$
	In particular, it is easy to choose $n_{\sigma_1\sigma_2}^{(\alpha\beta)}$'s such that
	$$G_{\sigma_2\sigma_1}=G_{\sigma_1\sigma_2}^{-1}.$$
    We haven't defined $G_{\sigma_1\sigma_2}$ for general $\sigma_1,\sigma_2\in\Sigma(n)$. To do this, given any $\sigma_1,\sigma_2\in\Sigma(n)$ such that $\tau:=\sigma_1\cap\sigma_2$, we consider a sequence of maximal cones $\sigma_1=\sigma_{1'},\sigma_{2'},\dots,\sigma_{l'}=\sigma_2\in\Sigma(n)$ such that $\tau\subset\sigma_{i'}$ and $\sigma_{i'}\cap\sigma_{(i+1)'}\in\Sigma(n-1)$, for all $i$. Such a sequence always exists since the branch locus $S$ is of codimension at least $2$. Then we put
    $$G_{\sigma_1\sigma_2}:=G_{\sigma_{(l-1)'}\sigma_{l'}}|_{U(\tau)}\circ\cdots\circ G_{\sigma_{1'}\sigma_{2'}}|_{U(\tau)},$$
    which is defined on $U(\tau)$. We need to ensure $G_{\sigma_1\sigma_2}$ is independent of the choice of such a sequence of maximal cones.
    
    \begin{definition}\label{def:consistent}
    Given a combinatorially indecomposable tropical Lagrangian multi-section $\bb{L}$ and a $\bb{C}^{\times}$-local system $\cu{L}$ on $L\backslash L^{(n-2)}$, a collection of wall-crossing automorphisms $\Theta:=\{\Theta_{\tau}(\omega')\}_{\tau\in\Sigma(n-1),\omega'\subset S'}$ defined by (\ref{eqn:theta}) is said to be \emph{$\omega$-consistent} if for any cycle of maximal cones
    $$\sigma_1,\sigma_2,\dots,\sigma_l,\sigma_{l+1}=\sigma_1$$
    such that $\omega\subset\sigma_i$ and $\sigma_i\cap\sigma_{i+1}\in\Sigma(n-1)$, for all $i$, the composition
    \begin{equation}\label{eqn:composition}
        G_{\sigma_l\sigma_{l+1}}|_{U(\omega)}\circ\cdots\circ G_{\sigma_1\sigma_2}|_{U(\omega)}:\cu{E}_{\sigma_1}|_{U(\omega)}\to\cu{E}_{\sigma_1}|_{U(\omega)}
    \end{equation}
    equals to the identity map on $\cu{E}_{\sigma_1}|_{U(\omega)}$. A collection of automorphisms $\Theta$ is said to be \emph{consistent} if it is $\omega$-consistent for all $\omega\in\Sigma$.
    \end{definition}
    
    \begin{proposition}
    A collection of wall-crossing automorphisms $\Theta$ is consistent if and only if it is $\omega$-consistent for all $\omega\in\Sigma(n-2)$.
    \end{proposition}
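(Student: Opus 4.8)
One direction is free: since $\Sigma(n-2)\subset\Sigma$, consistency of $\Theta$ trivially includes $\omega$-consistency for every $\omega\in\Sigma(n-2)$. For the converse the plan is to show that $\omega$-consistency for codimension $2$ cones propagates to all cones, by writing an arbitrary consistency cycle around a cone $\omega$ as a product of ``elementary'' cycles around codimension $2$ faces. First I would dispose of small codimension: if $\omega$ has codimension $0$ or $1$, at most two maximal cones contain it, and since $G_{\sigma_2\sigma_1}=G_{\sigma_1\sigma_2}^{-1}$ holds by construction, every admissible cycle obviously composes to the identity; codimension $2$ is the hypothesis itself. So the substance is the case of codimension $c\geq 3$.

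For such an $\omega$ I would pass to the link: set $\Sigma_\omega:=(\operatorname{Star}\omega)/(\bb{R}\cdot\omega)$, which is a complete fan on $N_{\bb{R}}/(\bb{R}\cdot\omega)\cong\bb{R}^c$ because $\Sigma$ is complete, and note that $\rho\mapsto\bar\rho$ is a codimension-preserving bijection between cones of $\Sigma$ containing $\omega$ and cones of $\Sigma_\omega$. Form the dual graph $\Gamma_\omega$ whose vertices are the maximal cones $\sigma\supset\omega$ and whose edges are the walls $\tau\supset\omega$; assigning to an oriented edge $\sigma\to\sigma'$ the isomorphism $G_{\sigma\sigma'}|_{U(\omega)}\colon\cu{E}_\sigma|_{U(\omega)}\to\cu{E}_{\sigma'}|_{U(\omega)}$ (the reversed edge getting the inverse) gives a functor from the fundamental groupoid $\Pi_1(\Gamma_\omega)$ to the groupoid of isomorphisms among the bundles $\cu{E}_\sigma|_{U(\omega)}$, and $\omega$-consistency is exactly the statement that its holonomy on every loop is the identity, i.e.\ that \eqref{eqn:composition} is the identity for every cycle.

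The heart of the argument is then topological: $\Gamma_\omega$ is the $1$-skeleton of the dual cell complex $\Sigma_\omega^\vee$ of the polytopal decomposition of $S^{c-1}$ induced by $\Sigma_\omega$, its $2$-cells are dual to the codimension $2$ cones $\bar\rho$ of $\Sigma_\omega$ (equivalently the codimension $2$ cones $\rho\supset\omega$ of $\Sigma$), and the boundary of the $2$-cell dual to $\bar\rho$ is precisely the cyclic sequence of maximal cones around $\rho$ --- an admissible consistency cycle for $\rho$. Since $c\geq 3$, $|\Sigma_\omega^\vee|\cong S^{c-1}$ is simply connected, so $\pi_1(\Gamma_\omega)$ is normally generated by these elementary loops; thus any loop of $\Gamma_\omega$ is a product of conjugates $p\,\ell_\rho\,p^{-1}$, with $\ell_\rho$ the elementary loop around some codimension $2$ cone $\rho\supset\omega$. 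By the hypothesis the composition around $\ell_\rho$ is the identity on $\cu{E}_\bullet|_{U(\rho)}$; restricting along $U(\omega)\hookrightarrow U(\rho)$ (available since $\omega$ is a face of $\rho$) it is the identity on $\cu{E}_\bullet|_{U(\omega)}$, conjugation by the isomorphism attached to $p$ preserves this, and a product of identities is the identity. Hence the holonomy around every cycle is trivial, i.e.\ $\Theta$ is $\omega$-consistent; running this over all $\omega$ yields consistency.

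I expect the main obstacle to be making the topological input rigorous rather than the groupoid bookkeeping: one must check that the fan $\Sigma_\omega$ induces a genuine \emph{regular} polytopal subdivision of $S^{c-1}$ admitting a dual CW complex homeomorphic to $S^{c-1}$, and that the $2$-cells of this dual complex are attached along exactly the cyclic loops of maximal cones around codimension $2$ cones. These are standard facts about complete fans, but they are where the geometry actually enters; once granted, everything reduces to the formal computation above, and in the end all that is used about $\Theta$ is codimension $2$ consistency together with the normalization $G_{\sigma_2\sigma_1}=G_{\sigma_1\sigma_2}^{-1}$.
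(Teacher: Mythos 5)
Your proof is correct and rests on the same underlying idea as the paper's: an arbitrary consistency cycle is decomposed into elementary cycles around codimension-$2$ cones, so that codimension-$2$ consistency propagates. The packaging differs. The paper realizes a cycle of maximal cones as a loop $\gamma$ in $N_{\bb{R}}\backslash|\Sigma(n-2)|$, observes that the composition (\ref{eqn:composition}) depends only on the homotopy class of $\gamma$, and uses that $\pi_1(N_{\bb{R}}\backslash|\Sigma(n-2)|)$ is generated by small loops around the codimension-$2$ strata. You instead work inside the star of $\omega$, form the dual graph $\Gamma_\omega$ of the link fan $\Sigma_\omega$, and invoke simple connectivity of the link sphere $S^{c-1}$ for $c\geq 3$ (via its dual CW structure) to normally generate $\pi_1(\Gamma_\omega)$ by the boundary loops of the $2$-cells, which are exactly the elementary cycles around codimension-$2$ cones $\rho\supset\omega$. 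Your version buys one small thing: since every elementary loop encircles a $\rho$ containing $\omega$, the identity on $\cu{E}_\bullet|_{U(\rho)}$ restricts directly along $U(\omega)\subset U(\rho)$, whereas in the paper's decomposition the generators of $\pi_1(N_{\bb{R}}\backslash|\Sigma(n-2)|)$ need not a priori encircle cones containing $\omega$, so one must either note that the decomposition can be carried out within the star of $\omega$ or compare the compositions on the dense torus. The price is having to justify the regular dual CW structure on the link sphere, which you correctly flag as the point where the geometry of complete fans enters; both topological inputs are standard, and both arguments equally rely on the normalization $G_{\sigma_2\sigma_1}=G_{\sigma_1\sigma_2}^{-1}$ to make the holonomy homotopy-invariant.
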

    \begin{proof}
    Fix $\omega\in\Sigma$. For each cycle of maximal cones
    $$\sigma_1,\sigma_2,\dots,\sigma_l,\sigma_{l+1}=\sigma_1$$
    that satisfy the condition in Definition \ref{def:consistent}, there is a loop $\gamma:[0,1]\to N_{\bb{R}}\backslash|\Sigma(n-2)|$ such that $\gamma(0)=\gamma(1)\in\Int(\sigma_1)$ and intersecting the codimension 1 cones $\Int(\sigma_i\cap\sigma_{i+1})$ transversely for all $i$. Note that the corresponding composition defined by (\ref{eqn:composition}) only depends on the homotopy class of $\gamma$. As $\pi_1(N_{\bb{R}}\backslash|\Sigma(n-2)|)$ is generated by loops around codimension 2 strata of $(N_{\bb{R}},\Sigma)$, we may write $\gamma$ in terms of these generators
    $$\gamma=\gamma_1*\cdots*\gamma_k.$$
    By choosing sufficiently generic $\gamma_i$'s, each of them determines a cycle of maximal cones that satisfies the condition stated in Definition \ref{def:consistent}. As the compositions correspond to $\gamma_i$'s equal to the identity, the composition corresponds to $\gamma$ also equal to the identity. Hence codimension 2 consistency implies consistency. The converse is trivial.
    \end{proof}
    
    It is clear that if $\Theta$ is consistent, then $G_{\sigma_1\sigma_2}$ is well-defined for all $\sigma_1,\sigma_2\in\Sigma(n)$ and the cocycle condition holds on arbitrary triple intersections. Let's make the following definition.
    
    \begin{definition}\label{def:unobs}
    A combinatorially indecomposable tropical Lagrangian multi-section $\bb{L}$ is called \emph{unobstructed} if there exists a $\bb{C}^{\times}$-local system $\cu{L}$ on $L\backslash L^{(n-2)}$ and a collection of consistent wall-crossing automorphisms $\Theta$. If $\bb{L}$ is unobstructed, we denote by $\cu{E}(\bb{L},\cu{L},\Theta)$ the vector bundle associated to the data $(\bb{L},\cu{L},\Theta)$.
    \end{definition}
    
    \begin{remark}\label{rmk:unobstructed}
    The notion of (weakly) unobstructed Lagrangian submanifolds was introduced in \cite{FOOO1} and \cite{AJ} for the immersed case. The main feature of an unobstructed Lagrangian submanifolds is that its Floer cohomology is well-defined and hence defines an object in the Fukaya category. In particular, unobstructed Lagrangian submanifolds should have the corresponding mirror objects. As the existence of Kaneyama's data or the data $(\cu{L},\Theta)$ are equivalent to the existence of toric vector bundles, we should think of the tropical Lagrangian multi-section can be ``realized" by an unobstructed Lagrangian. Thus, we borrow the terminology here.
    \end{remark}
    
    In defining $G_{\sigma_1\sigma_2}^{sf}$, we have chosen a 1-cocycle to represent the local system $\cu{L}$. When $\bb{L}$ is unobstructed, $\cu{E}(\bb{L},\cu{L},\Theta)$ is independent of such choice as the following proposition shows.
    
    \begin{proposition}\label{prop:independent}
    For any isomorphism $\cu{L}'\cong\cu{L}$ of local system on $L\backslash L^{(n-2)}$, there is an isomorphism $\cu{E}(\bb{L},\cu{L},\Theta)\cong\cu{E}(\bb{L},\cu{L}',\Theta')$ of toric vector bundles, for some consistent $\Theta'$.
    \end{proposition}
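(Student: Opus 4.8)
The plan is to track exactly how $\cu{E}(\bb{L},\cu{L},\Theta)$ depends on the $1$-cocycle $\{g^{sf}_{\sigma_1'\sigma_2'}\}$ chosen to represent $\cu{L}$, and to show that passing to a cohomologous cocycle — which is precisely what an isomorphism $\cu{L}'\cong\cu{L}$ supplies, since the good cover $\{V_{\sigma'}\}_{\sigma'\in\Sigma'(n)}$ of $L\backslash L^{(n-2)}$ has no triple overlaps — changes the output only by an isomorphism of toric vector bundles. First I would record that the isomorphism $\cu{L}'\cong\cu{L}$ is encoded by a $0$-cochain $\{c_{\sigma'}\}_{\sigma'\in\Sigma'(n)}\subset\bb{C}^{\times}$ with $g^{sf\prime}_{\sigma_1'\sigma_2'}=c_{\sigma_1'}^{-1}g^{sf}_{\sigma_1'\sigma_2'}c_{\sigma_2'}$ on each overlap $V_{\sigma_1'}\cap V_{\sigma_2'}$, where $g^{sf\prime}$ is a cocycle representing $\cu{L}'$.

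Next I would introduce, for each $\sigma\in\Sigma(n)$, the diagonal automorphism $h_{\sigma}:=\mathrm{diag}(c_{\sigma^{(1)}},\dots,c_{\sigma^{(r)}})$ of $\cu{E}_{\sigma}=\bigoplus_{\alpha}\cu{L}_{m(\sigma^{(\alpha)})}$ sending $1(\sigma^{(\alpha)})\mapsto c_{\sigma^{(\alpha)}}1(\sigma^{(\alpha)})$; rescaling each equivariant line summand by a constant is visibly an isomorphism of toric vector bundles over $U(\sigma)$. A one-line computation on the frame $\{1(\sigma_1^{(\alpha)})\}$ then gives $h_{\sigma_2}\circ G^{sf}_{\sigma_1\sigma_2}\circ h_{\sigma_1}^{-1}=G^{sf\prime}_{\sigma_1\sigma_2}$ whenever $\sigma_1\cap\sigma_2\in\Sigma(n-1)$, where the right-hand side is the semi-flat transition map built from $g^{sf\prime}$. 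Because $h_{\sigma_1}$ is diagonal, conjugation sends each $N_{\tau}(\omega')$ to a matrix $N_{\tau}'(\omega')$ of the same type — only the scalar $n^{(\alpha\beta)}_{\tau}(\omega')$ gets rescaled to $c_{\sigma_1^{(\alpha)}}c_{\sigma_1^{(\beta)}}^{-1}n^{(\alpha\beta)}_{\tau}(\omega')$, while the support pattern and the monomials $z^{m(\sigma_1^{(\alpha)})-m(\sigma_1^{(\beta)})}$ of Section \ref{sec:wall} are unchanged — and conjugation commutes with $\exp$ and preserves the nilpotency and mutual annihilation of Lemma \ref{lem:N_commute}. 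Hence $\Theta_{\tau}':=h_{\sigma_1}\Theta_{\tau}h_{\sigma_1}^{-1}=\prod_{\omega'}\exp(N_{\tau}'(\omega'))$ is again an admissible family of wall-crossing automorphisms, and the frame-change compatibility $G^{sf\prime}_{\sigma_1\sigma_2}\circ N_{\tau,\sigma_1}'\circ G^{sf\prime}_{\sigma_2\sigma_1}=N_{\tau,\sigma_2}'$ is inherited from the corresponding identity for $\Theta$ by conjugating with $h_{\sigma_2}$.

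Combining the two computations yields $h_{\sigma_2}\circ G_{\sigma_1\sigma_2}\circ h_{\sigma_1}^{-1}=G^{sf\prime}_{\sigma_1\sigma_2}\circ\Theta_{\tau}'=:G'_{\sigma_1\sigma_2}$ for adjacent maximal cones, hence for all $\sigma_1,\sigma_2\in\Sigma(n)$ by composing along any chain of maximal cones joining them. To check that $\Theta'$ is consistent I would take a cycle $\sigma_1,\dots,\sigma_{l+1}=\sigma_1$ of maximal cones around a given $\omega\in\Sigma(n-2)$ and telescope the cancelling interior factors: $G'_{\sigma_l\sigma_{l+1}}\circ\cdots\circ G'_{\sigma_1\sigma_2}=h_{\sigma_1}\circ\big(G_{\sigma_l\sigma_{l+1}}\circ\cdots\circ G_{\sigma_1\sigma_2}\big)\circ h_{\sigma_1}^{-1}$, which is the identity because $\Theta$ is consistent. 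Therefore $\cu{E}(\bb{L},\cu{L}',\Theta')$ is defined, and $\{h_{\sigma}\}_{\sigma\in\Sigma(n)}$ glues to an equivariant isomorphism $\cu{E}(\bb{L},\cu{L},\Theta)\cong\cu{E}(\bb{L},\cu{L}',\Theta')$. I expect the only real subtlety — the step worth stating carefully — to be the verification that $\Theta'$ remains an \emph{admissible} collection of wall-crossing factors (right support, nilpotency and commutation, and the $\sigma_1\leftrightarrow\sigma_2$ frame-change compatibility); it is exactly the choice of a diagonal $h_{\sigma}$, rather than an arbitrary equivariant automorphism of $\cu{E}_{\sigma}$, that keeps this verification trivial.
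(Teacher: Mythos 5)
Your proposal is correct and follows essentially the same route as the paper: the diagonal automorphisms $h_{\sigma}=\mathrm{diag}(c_{\sigma^{(1)}},\dots,c_{\sigma^{(r)}})$ are exactly the local matrices of the induced isomorphism $F:\pi_*\cu{L}\to\pi_*\cu{L}'$ used in the paper, and $\Theta'$ is obtained by conjugation in both arguments. Your write-up is in fact somewhat more careful than the paper's, which leaves implicit the verification that the conjugated $\Theta'$ still has the required support pattern and remains consistent.
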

    \begin{proof}
        Let $f:\cu{L}\to\cu{L}'$ be an isomorphism of local systems. It induces an isomorphism $F:\pi_*\cu{L}\to\pi_*\cu{L}'$ of rank $r$ local systems. Locally, $F$ is given by a constant matrix and thus can be regarded as a toric automorphism on a chart $U(\sigma)\subset X_{\Sigma}$. We also have
        $$F_{\sigma_2}\circ G_{\sigma_1\sigma_2}^{sf}=G_{\sigma_1\sigma_2}'^{sf}\circ F_{\sigma_1}.$$
        If $\Theta$ is a consistent data, we simply define $\Theta'$ by conjugation by $F$, that is,
        $$\Theta_{\sigma_1\sigma_2}':=F_{\sigma_1}\circ\Theta_{\sigma_1\sigma_2}\circ F_{\sigma_1}^{-1}.$$
        Then it is by definition that
        $$F_{\sigma_2}\circ G_{\sigma_1\sigma_2}=G_{\sigma_1\sigma_2}'\circ F_{\sigma_1},$$
        which means $\cu{E}(\bb{L},\cu{L},\Theta)\cong\cu{E}(\bb{L},\cu{L}',\Theta')$ as toric vector bundles.
    \end{proof}
    
    combinatorially indecomposability implies the following relation between $\cu{E}^{sf}(\bb{L},\cu{L})$ and $\cu{E}(\bb{L},\cu{L},\Theta)$. 
    
    \begin{theorem}\label{thm:restrict_to_sf}
    If $\bb{L}$ is combinatorially indecomposable, then $\Theta_{\sigma_1\sigma_2}|_{X_{\Sigma}^{(1)}}=Id$, for any $\sigma_1,\sigma_2\in\Sigma(n)$ so that $\sigma_1\cap\sigma_2\not\subset S$. In particular, if $\bb{L}$ is unobstructed, then $\cu{E}(\bb{L},\cu{L},\Theta)|_{X_{\Sigma}^{(1)}}=\cu{E}^{sf}(\bb{L},\cu{L})$.
    \end{theorem}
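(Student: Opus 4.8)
The plan is to prove the sharper statement that each factor $\Theta_{\sigma_1\sigma_2}$ restricts to the identity map on the $1$-skeleton, so that on $X_{\Sigma}^{(1)}$ the corrected transition maps $G_{\sigma_1\sigma_2}=G^{sf}_{\sigma_1\sigma_2}\circ\Theta_{\sigma_1\sigma_2}$ coincide with the semi-flat ones. First I would collect the inputs that come for free. By Proposition \ref{prop:com_inde_separable}, combinatorial indecomposability makes $\bb{L}$ both $(n-1)$-separable and forces $S'\subset L^{(n-2)}$; in particular $\mu(\rho')=1$ for every $\rho'\in\Sigma_L$ of codimension $\leq 1$. I would also record the toric fact that for $\tau\in\Sigma(n-1)$ the open set $V(\tau)=U(\tau)\cap X_{\Sigma}^{(1)}$ is exactly the distinguished torus orbit $O(\tau)\subset U(\tau)$, and that for $m\in\tau^{\vee}\cap M$ the monomial $z^{m}$ is a nowhere-vanishing character on $O(\tau)$ if $m\in\tau^{\perp}$ and vanishes identically on $O(\tau)$ otherwise.

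The core step is to show that $N_{\tau}(\omega')|_{V(\tau)}=0$ for every $\tau=\sigma_1\cap\sigma_2\in\Sigma(n-1)$ and every $\omega'$. Its diagonal is zero by construction, and each off-diagonal entry that is not already forced to vanish is a constant multiple of $z^{m(\sigma_1^{(\alpha)})-m(\sigma_1^{(\beta)})}$ with $\alpha\neq\beta$ and $m(\sigma_1^{(\alpha)})-m(\sigma_1^{(\beta)})\in\tau^{\vee}\cap M$; by the description of $V(\tau)$ it therefore suffices to rule out $m(\sigma_1^{(\alpha)})-m(\sigma_1^{(\beta)})\in\tau^{\perp}$. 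Suppose it held, and let $\tau^{(\alpha)}\prec\sigma_1^{(\alpha)}$, $\tau^{(\beta)}\prec\sigma_1^{(\beta)}$ be the facets lying over $\tau$. Since $\varphi$ agrees with $m(\sigma_1^{(\alpha)})$ on all of $\sigma_1^{(\alpha)}$, under the $\pi^{*}$-identifications $M(\tau^{(\alpha)})\cong M(\tau)\cong M(\tau^{(\beta)})$ the assumption says precisely $\varphi|_{\tau^{(\alpha)}}=\varphi|_{\tau^{(\beta)}}$, and $(n-1)$-separability then forces $\tau^{(\alpha)}=\tau^{(\beta)}=:\til{\tau}$. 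Thus $\til{\tau}$ would be a common facet of the two lifts $\sigma_1^{(\alpha)},\sigma_1^{(\beta)}$ of $\sigma_1$. Now apply condition (2) of the definition of a branched covering map to the open star $V$ of $\Int(\til{\tau})$ in $L$ and the open star $U$ of $\Int(\tau)$ in $N_{\bb{R}}$: $V$ is connected, $\pi(V)\subset U$, $\til{\tau}$ is the only lift of $\tau$ contained in $V$, and $\mu\equiv1$ on the maximal cells of $V$; so $Tr_{\pi|_V}(\mu)$ is constantly $\mu(\til{\tau})=1$ on $U$, which means exactly one lift of $\sigma_1$ is incident to $\til{\tau}$. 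Hence $\sigma_1^{(\alpha)}=\sigma_1^{(\beta)}$, contradicting $\alpha\neq\beta$. Therefore $m(\sigma_1^{(\alpha)})-m(\sigma_1^{(\beta)})\notin\tau^{\perp}$, the monomial vanishes on $V(\tau)$, $N_{\tau}(\omega')|_{V(\tau)}=0$, and consequently $\Theta_{\sigma_1\sigma_2}|_{V(\tau)}=\prod_{\omega'}\exp\!\bigl(N_{\tau}(\omega')|_{V(\tau)}\bigr)=\mathrm{id}$.

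Finally I would assemble the two bundles. On the cover $\{V(\sigma)\}_{\sigma\in\Sigma(n)}$ of $X_{\Sigma}^{(1)}$, the overlap $V(\sigma_1)\cap V(\sigma_2)=V(\sigma_1\cap\sigma_2)$ is $V(\tau)$ when $\sigma_1\cap\sigma_2=\tau\in\Sigma(n-1)$ and is empty as soon as $\dim(\sigma_1\cap\sigma_2)\leq n-2$; so on every nonempty overlap $G_{\sigma_1\sigma_2}$ restricts to $G^{sf}_{\sigma_1\sigma_2}$ by the core step, while the composite factors $\Theta_{\sigma_1\sigma_2}$ attached to lower-dimensional intersections are trivial on $X_{\Sigma}^{(1)}$ simply because $V(\sigma_1\cap\sigma_2)=\emptyset$ there. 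Since $\cu{E}(\bb{L},\cu{L},\Theta)$ and $\cu{E}^{sf}(\bb{L},\cu{L})$ are built by gluing the same local models $\cu{E}_{\sigma}$ along these transition maps, restricting the former to $X_{\Sigma}^{(1)}$ yields exactly the latter.

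The only genuine obstacle is the geometric heart of the core step: extracting a contradiction from $m(\sigma_1^{(\alpha)})\equiv m(\sigma_1^{(\beta)})\pmod{\tau^{\perp}}$. Translating this to an equality of $\varphi$ on two lifts of $\tau$ and invoking $(n-1)$-separability is routine, but one then has to use the local structure of a branched cover of cone complexes near a codimension-$1$ cell of multiplicity $1$ — that a maximal cone of the base has a unique lift incident to such a cell — which is exactly where $S'\subset L^{(n-2)}$ is used; the open-star trace computation above is the cleanest way I know to get this directly from the branched-covering axioms.
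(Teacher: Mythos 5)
Your proof is correct and follows essentially the same route as the paper: reduce to showing each off-diagonal entry of $N_{\tau}(\omega')$ is a monomial $z^{m(\sigma_1^{(\alpha)})-m(\sigma_1^{(\beta)})}$ with exponent in $\tau^{\vee}\setminus\tau^{\perp}$, so that it vanishes on $V(\tau)=U(\tau)\cap X_{\tau}$ and $\Theta_{\tau}$ restricts to the identity there. The only difference is that you supply, via the open-star trace computation, the justification for the step the paper leaves implicit — namely that distinct lifts $\sigma_1^{(\alpha)},\sigma_1^{(\beta)}$ must contain \emph{distinct} lifts of $\tau$ (using $S'\subset L^{(n-2)}$), which is what licenses the appeal to $(n-1)$-separability; this is a genuine and correctly filled gap rather than a change of method.
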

    \begin{proof}
        Let $\tau:=\sigma_1\cap\sigma_2\in\Sigma(n-1)$. For $m(\sigma_1^{(\alpha)})-m(\sigma_1^{(\beta)})\in\tau^{\vee}\cap M$, $(n-1)$-separability implies that there exists a ray $\rho\subset\tau$ so that
        $$(m(\sigma_1^{(\alpha)})-m(\sigma_1^{(\beta)}))(v_{\rho})>0,$$
        where $v_{\rho}$ is a generator of $\rho$. Hence $z^{m(\sigma_1^{(\alpha)})-m(\sigma_1^{(\beta)})}$ vanishes along the divisor $U(\tau)\cap X_{\rho}$ and in particular, vanishes on $U(\tau)\cap X_{\tau}$. Hence $\Theta_{\tau}|_{U(\tau)\cap X_{\tau}}=Id$ and this proves $\cu{E}(\bb{L},\cu{L},\Theta)|_{X_{\Sigma}^{(1)}}=\cu{E}^{sf}(\bb{L},\cu{L})$.
    \end{proof}

	By definition, unobstructedness implies the existence of Kaneyama data. It turns out all Kaneyama data arise from our construction.
    
    \begin{theorem}\label{thm:bundle_unobstructed}
    Suppose $\bb{L}$ is combinatorially indecomposable and admits a Kaneyama data ${\bf{g}}$. Then there exists a $\bb{C}^{\times}$-local system $\cu{L}$ on $L\backslash L^{(n-2)}$ and consistent $\Theta$ such that $\cu{E}(\bb{L},\cu{L},\Theta)=\cu{E}(\bb{L},{\bf{g}})$.
    \end{theorem}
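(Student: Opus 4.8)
The plan is to read the local system $\cu L$ and the wall-crossing factors $\Theta$ directly off the Kaneyama cocycle $\{g_{\sigma_1\sigma_2}\}$ and then check that the transition maps produced by the reconstruction are literally the $G_{\sigma_1\sigma_2}$ of $\cu E(\bb L,{\bf g})$. Throughout I use that, by Proposition \ref{prop:com_inde_separable}, $\bb L$ is connected, $(n-1)$-separable, and has $S'\subset L^{(n-2)}$. Two combinatorial facts do the work. (i) Distinct lifts $\sigma^{(\alpha)}\ne\sigma^{(\beta)}$ of a maximal cone $\sigma$ satisfy $m(\sigma^{(\alpha)})\ne m(\sigma^{(\beta)})$ and share no codimension-$1$ face: since $S$ has codimension $\ge 2$, each lift of a wall $\tau\in\Sigma(n-1)$ is a facet of exactly two maximal cones of $\Sigma'$ (otherwise $Tr_\pi(\mu)$ would fail to be locally constant), so the facets of $\sigma^{(\alpha)}$ and $\sigma^{(\beta)}$ over a given $\tau\subset\sigma$ are distinct, and $(n-1)$-separability then forces $m(\sigma^{(\alpha)})|_\tau\ne m(\sigma^{(\beta)})|_\tau$. (ii) Any two distinct lifts meet along a common face $\omega_0:=\sigma^{(\alpha)}\cap\sigma^{(\beta)}$ — nonempty because $L$ is connected, hence contains the minimal cone — with $\mu(\omega_0)\ge 2$, i.e.\ $\omega_0\subset S'$; this follows from local constancy of $Tr_\pi(\mu)$: evaluating the trace on a small connected neighbourhood of a point of $\Int(\omega_0)$ in the $\omega$-direction and in the $\sigma$-direction gives $\mu(\omega_0)=\sum_{\gamma:\,\omega_0\preceq\sigma^{(\gamma)}}\mu(\sigma^{(\gamma)})\ge\mu(\sigma^{(\alpha)})+\mu(\sigma^{(\beta)})\ge 2$.

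I would then construct $\cu L$. For a wall $\tau=\sigma_1\cap\sigma_2$, restrict $G_{\sigma_1\sigma_2}$ to the codimension-$1$ orbit $O(\tau)=U(\tau)\cap X_\Sigma^{(1)}$; since $z^m|_{O(\tau)}=0$ for $m\in\tau^\vee\setminus\tau^\perp$ and is a unit for $m\in\tau^\perp$, while $g_{\sigma_1\sigma_2}^{(\alpha\beta)}\ne 0$ forces $m(\sigma_1^{(\alpha)})-m(\sigma_2^{(\beta)})\in\tau^\vee$ by (G2), the restriction is a generalized permutation matrix supported on the pattern $\beta=\beta_{12}(\alpha)$ determined by $\sigma_1^{(\alpha)}\cap\sigma_2^{(\beta)}\in\Sigma'(n-1)$, which is a bijection by (i). Invertibility of this restriction forces $g_{\sigma_1^{(\alpha)}\sigma_2^{(\beta_{12}(\alpha))}}\ne 0$, so I set $g^{sf}_{\sigma_1^{(\alpha)}\sigma_2^{(\beta_{12}(\alpha))}}:=g_{\sigma_1^{(\alpha)}\sigma_2^{(\beta_{12}(\alpha))}}\in\bb C^\times$; restricting $g_{\sigma_1\sigma_2}g_{\sigma_2\sigma_1}=Id$ (from (G1),(G3)) to $O(\tau)$ gives $g^{sf}_{\sigma_1'\sigma_2'}g^{sf}_{\sigma_2'\sigma_1'}=1$, and since the chosen neighbourhoods $V_{\sigma'}$ have no triple overlaps this defines a $\bb C^\times$-local system $\cu L$ on $L\setminus L^{(n-2)}$. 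By the same vanishing, $G_{\sigma_1\sigma_2}|_{O(\tau)}=G^{sf}_{\sigma_1\sigma_2}|_{O(\tau)}$, so, by the argument of Theorem \ref{thm:restrict_to_sf}, $\cu E(\bb L,{\bf g})|_{X_\Sigma^{(1)}}=\cu E^{sf}(\bb L,\cu L)$.

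Next, on $U(\tau)$ put $\Theta_{\sigma_1\sigma_2}:=(G^{sf}_{\sigma_1\sigma_2})^{-1}\circ G_{\sigma_1\sigma_2}:\cu E_{\sigma_1}|_{U(\tau)}\to\cu E_{\sigma_1}|_{U(\tau)}$. A computation with the two defining formulas shows its $(\alpha,\beta)$-entry equals $\delta_{\alpha\beta}+c^{(\alpha\beta)}z^{m(\sigma_1^{(\alpha)})-m(\sigma_1^{(\beta)})}$ with $c^{(\alpha\beta)}\ne 0$ only when $\alpha\ne\beta$ and $m(\sigma_1^{(\alpha)})-m(\sigma_1^{(\beta)})\in\tau^\vee\cap M$ — hence, by (i), in $\tau^\vee\setminus\tau^\perp$. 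Picking $v_\tau\in\Int(\tau)\cap N$ and ordering the lifts of $\sigma_1$ by the value of $\langle m(\sigma_1^{(\cdot)}),v_\tau\rangle$ makes $\Theta_{\sigma_1\sigma_2}-Id$ strictly triangular, hence nilpotent, so $\Theta_{\sigma_1\sigma_2}$ is unipotent. The unipotent matrices of this ``monomial'' shape form a group $\cu G_\tau$; by (ii), each one-parameter generator $Id+cE_{\alpha\beta}z^{m(\sigma_1^{(\alpha)})-m(\sigma_1^{(\beta)})}$ of $\cu G_\tau$ is a legitimate factor $\Theta_\tau(\omega')=\exp(N_\tau(\omega'))$ with $\omega'=\sigma_1^{(\alpha)}\cap\sigma_1^{(\beta)}\subset S'$ and $\omega'\in S_\tau'$. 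Since the Lie algebra of $\cu G_\tau$ is the sum of the Lie algebras of the subgroups attached to the ramified cones $\omega'\in S_\tau'$, standard unipotent-group theory then lets me write $\Theta_{\sigma_1\sigma_2}=\prod_{\omega'\in S_\tau'}\exp(N_\tau(\omega'))$ for an appropriate choice of scalars $n^{(\alpha\beta)}_\tau(\omega')$. Verifying this factorization — that every off-diagonal ``slot'' of $(G^{sf}_{\sigma_1\sigma_2})^{-1}G_{\sigma_1\sigma_2}$ is licensed by a ramified cone and that the decomposition respects the prescribed supports in (\ref{eqn:theta}) — is the main obstacle.

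Finally, the reconstructed transition $G^{rec}_{\sigma_1\sigma_2}:=G^{sf}_{\sigma_1\sigma_2}\circ\Theta_{\sigma_1\sigma_2}$ equals $G_{\sigma_1\sigma_2}$ for adjacent $\sigma_1,\sigma_2$ by construction, and for a general pair $G^{rec}_{\sigma_1\sigma_2}$ is obtained by composing these along a chain of walls; since $\{g_{\sigma_1\sigma_2}\}$ already satisfies (G3), the composition of the $G_{\sigma_i\sigma_{i+1}}$ around any cycle of maximal cones through a cone $\omega$ restricts to the identity on $U(\omega)$, which is exactly $\omega$-consistency of $\Theta$ for every $\omega\in\Sigma$. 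Hence $(\cu L,\Theta)$ exhibits $\bb L$ as unobstructed, and $\cu E(\bb L,\cu L,\Theta)$, being glued from $\{G^{rec}_{\sigma_1\sigma_2}\}=\{G_{\sigma_1\sigma_2}\}$ over the same cover, coincides with $\cu E(\bb L,{\bf g})$.
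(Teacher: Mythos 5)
Your proposal is correct and follows essentially the same route as the paper: extract the $\bb{C}^{\times}$-local system from the generalized-permutation part of $G_{\sigma_1\sigma_2}$ surviving on $V(\tau)$ (killed off-pattern by $(n-1)$-separability), set $\Theta_{\sigma_1\sigma_2}:=(G^{sf}_{\sigma_1\sigma_2})^{-1}\circ G_{\sigma_1\sigma_2}$, take its logarithm, split into wall-crossing factors, and deduce consistency from the cocycle condition (G3). The step you flag as ``the main obstacle'' is not one: the paper simply decomposes $N_{\sigma_1\sigma_2}=\log\Theta_{\sigma_1\sigma_2}$ entry by entry, assigning the $(\alpha,\beta)$-slot to $\omega'=\sigma_1^{(\alpha)}\cap\sigma_1^{(\beta)}$, which lies in $S'_{\tau}(\sigma_1)$ by definition whenever the slope condition holds and lies in $S'$ by exactly your fact (ii), so no unipotent-group theory is needed.
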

    \begin{proof}
        The transition maps of $\cu{E}(\bb{L},{\bf{g}})$ are of form
        $$1(\sigma_1^{(\alpha)})\mapsto\sum_{\beta=1}^rg_{\sigma_1^{(\alpha)}\sigma_2^{(\beta)}}z^{m(\sigma_1^{(\alpha)})-m(\sigma_2^{(\beta)})}1(\sigma_2^{(\beta)}).$$
        Consider two distinct maximal cones $\sigma_1,\sigma_2\in\Sigma(n)$ such that $\tau:=\sigma_1\cap\sigma_2\in\Sigma(n-1)$. For each lift $\sigma_2^{(\beta)}$ of $\sigma_2$, let $\sigma_1^{(\alpha')}$ be the unique lift of $\sigma_1$ such that $\sigma_1^{(\alpha')}\cap\sigma_2^{(\beta)}\in\Sigma_L(n-1)$. If $\sigma_1^{(\alpha)}\cap\sigma_2^{(\beta)}\subset S'$ then $\alpha\neq\alpha'$ and
        $$z^{m(\sigma_1^{(\alpha)})-m(\sigma_1^{(\alpha')})}=z^{m(\sigma_1^{(\alpha)})-m(\sigma_2^{(\beta)})}z^{m(\sigma_2^{(\beta)})-m(\sigma_1^{(\alpha')})}$$
        is a regular function since $z^{m(\sigma_2^{(\beta)})-m(\sigma_1^{(\alpha')})}$ is nowhere vanishing on $U(\tau)$. As before $(n-1)$-separability implies the monomial $z^{m(\sigma_1^{(\alpha)})-m(\sigma_1^{(\alpha')})}$ vanishes completely on $V(\tau)$. Hence the transition map of $\cu{E}(\bb{L},{\bf{g}})|_{X_{\Sigma}^{(1)}}$ on $V(\tau)$ is given by
        $$G_{\sigma_1\sigma_2}^{sf}:=G_{\sigma_1\sigma_2}|_{V(\tau)}:1(\sigma_1^{(\alpha)})\mapsto g_{\sigma_1^{(\alpha)}\sigma_2^{(\beta)}}z^{m(\sigma_1^{(\alpha)})-m(\sigma_2^{(\beta)})}1(\sigma_2^{(\beta)}),$$
        where $\beta$ is determined by $\alpha$ as before. Then with respect to the cover $\{V_{\sigma'}\}_{\sigma'\in\Sigma_L(n)}$ of $L\backslash L^{(n-2)}$, $\{g_{\sigma_1^{(\alpha)}\sigma_2^{(\beta)}}\}$ gives a $\bb{C}^{\times}$-local system $\cu{L}$ on $L\backslash L^{(n-2)}$. For $\sigma_1\cap\sigma_2\not\subset S$, we define
        $$\Theta_{\sigma_1\sigma_2}:=(G_{\sigma_1\sigma_2}^{sf})^{-1}\circ G_{\sigma_1\sigma_2}.$$
        The diagonal entries of $\Theta_{\sigma_1\sigma_2}$ are all equal to 1 and $(n-1)$-separability implies $\Theta_{\sigma_1\sigma_2}-Id$ is nilpotent (see Lemma \ref{lem:N_commute}). This allows us to define
        $$N_{\sigma_1\sigma_2}:=\log(\Theta_{\sigma_1\sigma_2})=\log(Id+(\Theta_{\sigma_1\sigma_2}-Id))=\sum_{k=1}^{\infty}(-1)^{k-1}\frac{(\Theta_{\sigma_1\sigma_2}-Id)^k}{k}.$$
        With respect to the frame $\{1(\sigma_1^{(\alpha)})\}$, the $(\alpha,\beta)$-entry of $N_{\sigma_1\sigma_2}$ is given by
        $$N_{\sigma_1\sigma_2}^{(\alpha\beta)}=\begin{cases}
        n_{\sigma_1\sigma_2}^{(\alpha\beta)}z^{m(\sigma_1^{(\alpha)})-m(\sigma_1^{(\beta)})} & \text{ if }\alpha\neq\beta\text{ and }m(\sigma_1^{(\alpha)})-m(\sigma_1^{(\beta)})\in(\sigma_1\cap\sigma_2)^{\vee}\cap M,\\
        0 & \text{ otherwise},
        \end{cases}$$
        which can be decomposed as
        $$N_{\sigma_1\sigma_2}=\sum_{\omega'\in S_{\sigma_1\sigma_2}'}N_{\sigma_1\sigma_2}(\omega').$$
        The collection $\{\Theta_{\sigma_1\sigma_2}\}$ is obviously consistent so that $\cu{E}(\bb{L},\cu{L},\Theta)=\cu{E}(\bb{L},{\bf{g}})$.
    \end{proof}
    
    \begin{example}
    We look at the 2-fold tropical Lagrangian multi-section $\bb{L}_{a,b,c}$ over the fan of $\bb{P}^2$. Here $a,b,c>0$.
    \begin{figure}[H]
		\centering
		\includegraphics[width=115mm]{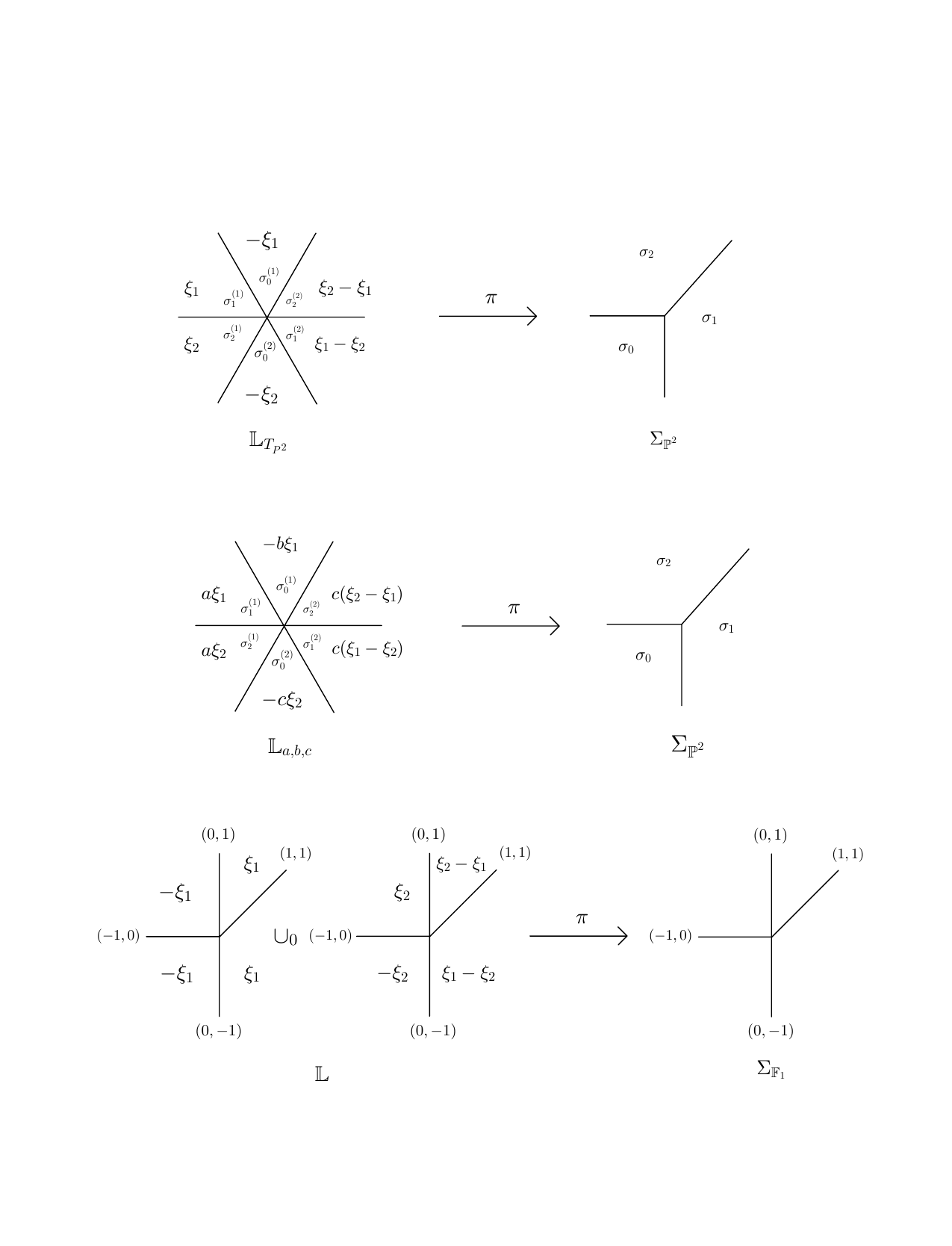}
	    \caption{The tropical Lagrangian multi-section $\bb{L}_{a,b,c}$ over $\Sigma_{\bb{P}^2}$.}
		\label{fig:L_abc_trop}
    \end{figure}
    Choose $\cu{L}$ to be the local system on $L\backslash \pi^{-1}(0)\cong\bb{R}^2\backslash\{0\}$ that has monodromy $-1$ around the minimal cone. Let $z_j^i:=Z_i/Z_j$ be the inhomogeneous coordinates on $U(\sigma_i)\cap U(\sigma_j)\subset\bb{P}^2$. The semi-flat mirror bundle $\cu{E}_0(\bb{L}_{a,b,c},\cu{L})$ on the $\bb{P}^1$-skeleton of $\bb{P}^2$ is given by the transition maps
    $$\tau_{01}^{sf}:=\begin{pmatrix}
    -\frac{1}{(z_0^1)^{a+b}} & 0\\
    0 & \frac{1}{(z_0^1)^c}
    \end{pmatrix},\,
    \tau_{12}^{sf}:=\begin{pmatrix}
    \frac{1}{(z_1^2)^a} & 0\\
    0 & -\frac{1}{(z_1^2)^{b+c}}
    \end{pmatrix},\,
    \tau_{20}^{sf}:=\begin{pmatrix}
    0 & \frac{1}{(z_2^0)^b}\\
    -\frac{1}{(z_2^0)^{a+c}} & 0
    \end{pmatrix}.$$
    We choose the wall-crossing factors to be
    $$\Theta_{01}:=\begin{pmatrix}
    1 & 0\\
    -\frac{(z_0^2)^c}{(z_0^1)^b} & 1
    \end{pmatrix},\,
    \Theta_{12}:=\begin{pmatrix}
    1 & -\frac{(z_1^0)^a}{(z_1^2)^c}\\
    0 & 1
    \end{pmatrix},\,
    \Theta_{20}:=\begin{pmatrix}
    1 & 0\\
    -\frac{(z_2^1)^b}{(z_2^0)^a} & 1
    \end{pmatrix}.$$
    One can see that the resulting toric vector bundle $\cu{E}(\bb{L}_{a,b,c},\cu{L},\Theta)$ is actually isomorphic to $E_{a,b,c}$, the toric vector bundle introduced by Kaneyama in \cite{Kaneyama_classification} using the exact sequence
    $$0\to\cu{O}_{\bb{P}^2}\to\cu{O}(aD_0)\oplus\cu{O}(bD_1)\oplus\cu{O}(cD_2)\to E_{a,b,c}\to 0.$$
    \end{example}

	\begin{remark}\label{rem:SYZ}
	From the symplectic point of view, we may think of $\Theta_{\tau}(\omega')$ as the exponentiation of the generating function of holomorphic disks emitted from the ramification locus $\omega'$, bounded by the Lagrangian multi-section and certain SYZ fibers of $p:T^*N_{\bb{R}}/M\to N_{\bb{R}}$. The exponent $m(\sigma_1^{(\alpha)})-m(\sigma_1^{(\beta)})$ in $\Theta_{\tau}(\omega')$ should be regarded as the direction of a wall if we use the polytope picture in $M_{\bb{R}}$. See \cite{Suen_TP2} for a more detailed discussion in dimension 2.
	\end{remark}

    \section{Unobstructedness in dimension 2}\label{sec:rank2}
    
    In this final section, we would like to determine when $\bb{L}$ is unobstructed when $\bb{L}$ is a combinatorially indecomposable tropical Lagrangian multi-section over a 2-dimensional complete fan. In this case, the ramification locus $S'=L^{(0)}=\pi^{-1}(0)$ is a singleton and $L\backslash\pi^{-1}(0)\cong\bb{R}^2\backslash\{0\}$ topologically. First of all, not all such tropical Lagrangian multi-sections are unobstructed.
  
    \begin{example}\label{eg:separated_obstructed}
    Consider the tropical Lagrangian multi-section $\bb{L}$ depicted as in Figure \ref{fig:separated_obstructed}.
	\begin{figure}[H]
		\centering
		\includegraphics[width=130mm]{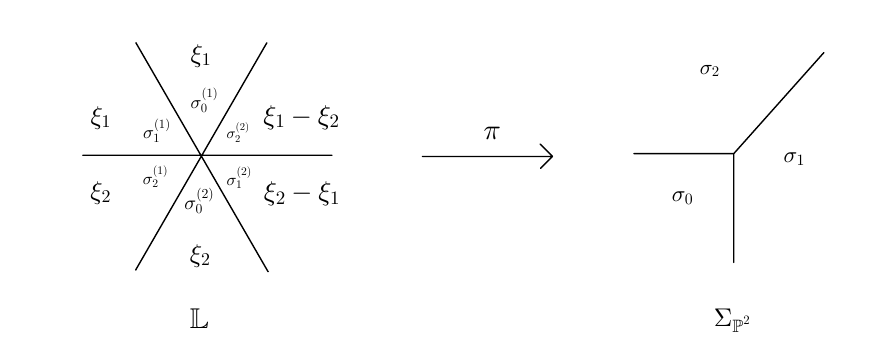}
	    \caption{}
		\label{fig:separated_obstructed}
    \end{figure}
    It is easy to see that $\bb{L}$ is maximal and separated, which implies combinatorially indecomposability by Proposition \ref{prop:com_inde_separated}. However, one checks easily that the matrices $G_{\sigma_0\sigma_1},G_{\sigma_1\sigma_2}$ are all upper-triangular while $G_{\sigma_2\sigma_0}$ must have two non-zero off-diagonal entries. Thus $\bb{L}$ must be obstructed.
    \end{example}

    Therefore, we need an extra assumption on the piecewise linear function $\varphi$ to ensure unobstructedness. We begin with two lemmas.

    \begin{lemma}\label{lem:slope_difference}
        Suppose $\bb{L}$ is a combinatorially indecomposable rank $r$ tropical Lagrangian multi-section over a complete 2-dimensional fan $\Sigma$. Let $\sigma\in\Sigma(2)$ and $\rho\subset\sigma$ be a ray. Then for $\alpha\neq\beta$, either $m(\sigma^{(\alpha)})-m(\sigma^{(\beta)})\in\rho^{\vee}\cap M$ or $m(\sigma^{(\beta)})-m(\sigma^{(\alpha)})\in\rho^{\vee}\cap M$.
    \end{lemma}
    \begin{proof}
         Since $\rho\not\subset S$, by separability, $m(\sigma^{(\alpha)})|_{\rho}\neq m(\sigma^{(\beta)})|_{\rho}$ if $\alpha\neq\beta$. In particular, $m(\sigma^{(\alpha)})-m(\sigma^{(\beta)})\neq 0$. Note that $\rho^{\vee}$ is a half plane in $M_{\bb{R}}$, we have $m(\sigma^{(\alpha)})-m(\sigma^{(\beta)})$ or $m(\sigma^{(\beta)})-m(\sigma^{(\alpha)})$ lies in $\rho^{\vee}$. Separability implies none of them can lay in $\rho^{\perp}$. Hence only one of them can lay in $\rho^{\vee}$.
    \end{proof}

    Being unobstructed also restricts the choice of the local system $\cu{L}$.
    
    \begin{lemma}\label{lem:local_system}
        If $\bb{L}$ is an unobstructed combinatorially indecomposable rank $r$ tropical Lagrangian multi-section over a complete 2-dimensional fan $\Sigma$, then $\cu{L}$ is the unique local system on $L\backslash S'$ that has monodromy $(-1)^{r+1}$ around the unique ramification point of $\pi:L\to N_{\bb{R}}$. 
    \end{lemma}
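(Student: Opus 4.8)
The plan is to read off the monodromy of $\cu{L}$ from the consistency condition around the origin by taking determinants. Since $\bb{L}$ is combinatorially indecomposable and $\dim N_{\bb{R}}=2$, the ramification locus is $S'=L^{(0)}=\pi^{-1}(0)$, a single point, so $\pi$ restricts to a finite covering map $\pi:L\backslash S'\to N_{\bb{R}}\backslash\{0\}$ of degree $r$. As $N_{\bb{R}}\backslash\{0\}$ is homotopy equivalent to a circle and $L$ is connected, this cover is connected; hence its monodromy permutation is a single $r$-cycle, $\pi_1(L\backslash S')\cong\bb{Z}$, and a $\bb{C}^{\times}$-local system on $L\backslash S'$ is determined up to isomorphism by its monodromy $t\in\bb{C}^{\times}$ around the ramification point. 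This already gives the uniqueness statement, so it remains to show that unobstructedness forces $t=(-1)^{r+1}$.

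To that end, enumerate the maximal cones cyclically as $\sigma_1,\dots,\sigma_d,\sigma_{d+1}=\sigma_1$, with $d=\#\Sigma(2)$, and set $\tau_i=\sigma_i\cap\sigma_{i+1}\in\Sigma(1)$. By $(n-1)$-separability (Proposition \ref{prop:com_inde_separable}) the rule $\alpha\mapsto\beta_i(\alpha)$, where $\sigma_{i+1}^{(\beta_i(\alpha))}$ is the unique lift of $\sigma_{i+1}$ with $\sigma_i^{(\alpha)}\cap\sigma_{i+1}^{(\beta_i(\alpha))}\in\Sigma'(n-1)$, defines a permutation $\rho_i\in S_r$. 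In the equivariant frames $G_{\sigma_i\sigma_{i+1}}^{sf}$ is then a monomial matrix whose only nonzero entry in row $\alpha$ is $g_{\sigma_i^{(\alpha)}\sigma_{i+1}^{(\beta_i(\alpha))}}^{sf}\,z^{m(\sigma_i^{(\alpha)})-m(\sigma_{i+1}^{(\beta_i(\alpha))})}$, so writing $c_i:=\sum_{\alpha}m(\sigma_i^{(\alpha)})\in M$ we get
$$\det G_{\sigma_i\sigma_{i+1}}^{sf}=\mathrm{sgn}(\rho_i)\Big(\prod_{\alpha=1}^{r}g_{\sigma_i^{(\alpha)}\sigma_{i+1}^{(\beta_i(\alpha))}}^{sf}\Big)\,z^{c_i-c_{i+1}}.$$

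Since $\bb{L}$ is unobstructed there is a consistent $\Theta$, so in particular the cyclic composition $G_{\sigma_d\sigma_{d+1}}\circ\cdots\circ G_{\sigma_1\sigma_2}$ restricted to $U(\{0\})=(\bb{C}^{\times})^2$ equals the identity of $\cu{E}_{\sigma_1}|_{U(\{0\})}$. Using $G_{\sigma_i\sigma_{i+1}}=G_{\sigma_i\sigma_{i+1}}^{sf}\circ\Theta_{\sigma_i\sigma_{i+1}}$, the fact that $\det\Theta_{\sigma_i\sigma_{i+1}}=1$, and the telescoping $\sum_{i=1}^{d}(c_i-c_{i+1})=0$, taking determinants yields
$$1=\Big(\prod_{i=1}^{d}\mathrm{sgn}(\rho_i)\Big)\Big(\prod_{i=1}^{d}\prod_{\alpha=1}^{r}g_{\sigma_i^{(\alpha)}\sigma_{i+1}^{(\beta_i(\alpha))}}^{sf}\Big).$$
Now $\rho_d\circ\cdots\circ\rho_1$ is the monodromy permutation of the cover $\pi|_{L\backslash S'}$, hence an $r$-cycle, so $\prod_i\mathrm{sgn}(\rho_i)=(-1)^{r-1}=(-1)^{r+1}$; and as $(i,\alpha)$ runs over $\{1,\dots,d\}\times\{1,\dots,r\}$ the edges $\sigma_i^{(\alpha)}\cap\sigma_{i+1}^{(\beta_i(\alpha))}$ are met exactly once by a single lift of $\gamma^{r}$, where $\gamma$ generates $\pi_1(N_{\bb{R}}\backslash\{0\})$, and this lift is a loop generating $\pi_1(L\backslash S')$. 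As the transition coefficients of a rank-one local system commute, the double product therefore equals the monodromy $t$ of $\cu{L}$ around the ramification point, and we conclude $(-1)^{r+1}t=1$, i.e. $t=(-1)^{r+1}$.

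The one step needing care is the last identification: one must verify that the $rd$ edges above, taken in the correct cyclic order, form exactly one traversal of a generator of $\pi_1(L\backslash S')$, so that the telescoped product of the $g^{sf}$ is literally the monodromy of $\cu{L}$. This is precisely where connectedness of the cover (so that the monodromy is a single $r$-cycle rather than a product of shorter cycles) and separability (so that $\alpha\mapsto\beta_i(\alpha)$ is a bijection) are both used. Everything else — the determinant of a monomial matrix, $\det\Theta_{\tau}=1$, and the telescoping of exponents — is routine.
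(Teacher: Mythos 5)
Your proof is correct and follows essentially the same route as the paper: take the determinant of the cyclic consistency composition around the origin, use $\det\Theta_\tau=1$ and the telescoping of the monomial exponents, and identify the sign $(-1)^{r+1}$ of the $r$-cycle monodromy permutation together with the product of the $g^{sf}$'s as the holonomy of $\cu{L}$ along the generating loop upstairs. The only differences are cosmetic: the paper first normalizes the labeling of lifts so that all but the last $G^{sf}_{\sigma_i\sigma_{i+1}}$ are diagonal (putting the entire $r$-cycle into $G^{sf}_{\sigma_k\sigma_1}$), whereas you track the individual permutation signs $\mathrm{sgn}(\rho_i)$ and invoke connectedness of the cover, and you also spell out the uniqueness of the local system via $\pi_1(L\backslash S')\cong\bb{Z}$, which the paper leaves implicit.
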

    \begin{proof}
        Since $G_{\sigma_1\sigma_2}^{sf},\dots,G_{\sigma_{k-1}\sigma_k}^{sf}$ are all diagonal, by taking the determinant of (\ref{eqn:composition}), we have
        $$(-1)^{r+1}\prod_{\alpha=1}^rg_{\sigma_k^{(\alpha)}\sigma_1^{(\alpha+1)}}^{sf}\prod_{i=1}^{k-1}\prod_{\alpha=1}^rg_{\sigma_i^{(\alpha)}\sigma_{i+1}^{(\alpha)}}^{sf}=1.$$
        Hence the monodromy of $\cu{L}$, which is given by the cyclic product of all $g_{\sigma_i^{(\alpha)}\sigma_{i+1}^{(\beta)}}$'s, is equal to $(-1)^{r+1}$. As we are in dimension 2, the monodromy around the ramification point uniquely determines the local system.
    \end{proof}
    
    \begin{remark}
    When $r=2$, the choice of the local system $\cu{L}$ has appeared in the construction of the semi-flat bundle in \cite{Fukaya_asymptotic_analysis}, Section 6.1. Fukaya pointed out in \cite{Fukaya_asymptotic_analysis}, Remark 6.4 that there should be a Floer theoretic explanation of this local system based on the orientation problem of holomorphic disks. Believing the monomial term $n_{\tau\sigma}^{(\alpha\beta)}z^{m(\sigma^{(\alpha)})-m(\sigma^{(\beta)})}$ corresponds to holomorphic disks, our calculation in Lemma \ref{lem:local_system} suggests that the present of $\cu{L}$ is due to the fact that a holomorphic disk only propagates in only \emph{one} direction; $m(\sigma^{(\alpha)})-m(\sigma^{(\beta)})$ or $m(\sigma^{(\beta)})-m(\sigma^{(\alpha)})$ but not both. The number $n_{\tau\sigma}^{(\alpha\beta)}$ will then be the weighted count of holomorphic disks (with extra boundary deformations if necessary, see Remark \ref{rmk:deformation}).
    \end{remark}
    
    Therefore, to obtain unobstructedness, it is necessary for us to choose $\cu{L}$ to be the unique local system on $L\backslash S'$ that has monodromy $(-1)^{r+1}$. In particular, by Proposition \ref{prop:independent}, we may choose the transition maps of $\cu{L}$ to be $g_{\sigma_i^{(\alpha)}\sigma_{i+1}^{(\alpha)}}^{sf}=1$, for all $i<k$, $\alpha=1,\dots,r$ and $g_{\sigma_k^{(r)}\sigma_1^{(1)}}^{sf}=(-1)^{r+1},g_{\sigma_k^{(\alpha)}\sigma_1^{(\alpha+1)}}^{sf}=1$ for $\alpha<r$. We put
    $$g_{\sigma_k\sigma_1}^{sf}:=\begin{pmatrix}
    0 & \cdots & 0 & (-1)^{r+1}\\
    1 & \cdots & 0 & 0\\
    \vdots & \ddots & \vdots & \vdots\\
    0 & \cdots & 1 & 0
    \end{pmatrix},$$
    which is the monodromy of the rank $r$ local system $\pi_*\cu{L}$ on  $N_{\bb{R}}\backslash\{0\}$. The consistency condition then becomes
    $$\theta_{\sigma_k\sigma_1}\circ\theta_{\sigma_{k-1}\sigma_k}\circ\cdots\circ\theta_{\sigma_1\sigma_2}=g_{\sigma_1\sigma_k}^{sf},$$
    where $\theta_{\sigma_i\sigma_{i+1}}$ is obtained by deleting the monomial part of $\Theta_{\sigma_i\sigma_{i+1}}$. Recall that $\Theta_{\sigma_i\sigma_{i+1}}$ is of the form $Id+N_{\sigma_i\sigma_{i+1}}$, we may write the above equation as
    \begin{equation}\label{eqn:solve_n}
        \prod_{i=1}^k(Id+n_{\sigma_i\sigma_{i+1}})=g_{\sigma_1\sigma_k}^{sf},
    \end{equation}
    Thus unobstructedness of $\bb{L}$ is equivalent to solving $n_{\sigma_i\sigma_{i+1}}$'s subordinated to the conditions
    \begin{enumerate}
        \item [(N1)] $n_{\sigma_i\sigma_{i+1}}^{(\alpha\alpha)}=0$.
        \item [(N2)] $n_{\sigma_i\sigma_{i+1}}^{(\alpha\beta)}\neq 0$ only if $m(\sigma_i^{(\alpha)})-m(\sigma_i^{(\beta)})\in (\sigma_i\cap\sigma_{i+1})^{\vee}\cap M$.
    \end{enumerate}
    Note that (N2) gives a combinatorial constraint on $\varphi$ for solving (\ref{eqn:solve_n}) as expected by Example \ref{eg:separated_obstructed}. Although Equation (\ref{eqn:solve_n}) is not easy to solve for general $r$, it has the following interesting consequence.
    
    \begin{theorem}\label{thm:dim_r}
    Let $\bb{L}$ be combinatorially indecomposable rank $r$ tropical Lagrangian multi-section over a complete 2-dimensional fan $\Sigma$. Then $$\dim_{\bb{C}}(\cu{K}(\bb{L}))\leq\frac{r(r-1)}{2}\cdot\#\Sigma(1),$$
    where $\cu{K}(\bb{L})$ is the moduli space of toric vector bundles with equivariant Chern classes determined by $\bb{L}$.
    \end{theorem}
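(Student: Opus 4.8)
The plan is to realise $\cu{K}(\bb{L})$ as the image of an explicit affine space of dimension $\tfrac{r(r-1)}{2}\cdot\#\Sigma(1)$ under the ``SYZ parametrisation'' set up just above, and then to argue that neither passing to a subvariety nor passing to a quotient can raise the dimension. First I would dispose of the obstructed case: if $\bb{L}$ is obstructed then $\cu{K}(\bb{L})=\emptyset$ and the inequality is vacuous, so from now on assume $\bb{L}$ is unobstructed. Enumerate the maximal cones $\sigma_1,\dots,\sigma_k$ cyclically around the origin, with $\sigma_{k+1}:=\sigma_1$; since $\Sigma$ is a complete fan on $\bb{R}^2$ the rays and the $2$-cones alternate, so $k=\#\Sigma(2)=\#\Sigma(1)$ and the walls $\tau_i:=\sigma_i\cap\sigma_{i+1}$, $i=1,\dots,k$, are precisely the rays of $\Sigma$.

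Next I would pin down the data. By Lemma \ref{lem:local_system} the local system $\cu{L}$ is forced, and by Proposition \ref{prop:independent} I may fix once and for all the cocycle representative of $\cu{L}$ exhibited just before \eqref{eqn:solve_n}; then the only remaining freedom is the collection $\Theta=\{\Theta_{\sigma_i\sigma_{i+1}}\}$, which by \eqref{eqn:theta} is encoded by the matrices $n_{\sigma_i\sigma_{i+1}}$. Theorem \ref{thm:bundle_unobstructed}, combined with Theorem \ref{thm:Lag_bundle}, tells me that every class in $\cu{K}(\bb{L})$ is the Kaneyama class of some $\cu{E}(\bb{L},\cu{L},\Theta)$ with $\Theta$ consistent; in other words, the map sending a tuple $(n_{\sigma_i\sigma_{i+1}})_{i=1}^{k}$ satisfying (N1), (N2) and the consistency equation \eqref{eqn:solve_n} to the corresponding class in $\cu{K}(\bb{L})$ is surjective.

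The heart of the argument is then a crude parameter count. Condition (N1) kills the diagonal of each $n_{\sigma_i\sigma_{i+1}}$. For a fixed wall $\tau_i$ and a fixed unordered pair $\{\alpha,\beta\}$ with $\alpha\neq\beta$, Lemma \ref{lem:slope_difference} applied to the ray $\tau_i\subset\sigma_i$ shows that exactly one of $m(\sigma_i^{(\alpha)})-m(\sigma_i^{(\beta)})$ and $m(\sigma_i^{(\beta)})-m(\sigma_i^{(\alpha)})$ lies in $\tau_i^{\vee}\cap M$; hence by (N2) at most one of the two entries $n_{\sigma_i\sigma_{i+1}}^{(\alpha\beta)},\,n_{\sigma_i\sigma_{i+1}}^{(\beta\alpha)}$ is permitted to be non-zero. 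Thus each $n_{\sigma_i\sigma_{i+1}}$ ranges over a coordinate subspace of $\mathrm{Mat}_r(\bb{C})$ of dimension at most $\binom{r}{2}=\tfrac{r(r-1)}{2}$, so the affine space $\cu{N}$ of all tuples allowed by (N1) and (N2) satisfies $\dim_{\bb{C}}\cu{N}\le\tfrac{r(r-1)}{2}\cdot k=\tfrac{r(r-1)}{2}\cdot\#\Sigma(1)$.

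To finish, I would note that the solutions of \eqref{eqn:solve_n} inside $\cu{N}$ cut out a closed subvariety $\cu{N}_0$, that the surjection $\cu{N}_0\to\cu{K}(\bb{L})$ above is algebraic (it factors through the transition cocycle and the matrix exponential of \eqref{eqn:theta}), and that the dimension of an image does not exceed the dimension of the source; hence $\dim_{\bb{C}}\cu{K}(\bb{L})\le\dim_{\bb{C}}\cu{N}_0\le\dim_{\bb{C}}\cu{N}\le\tfrac{r(r-1)}{2}\cdot\#\Sigma(1)$. The one step I expect to require genuine care is this last paragraph: one must fix the scheme (or at least topological) structure on $\cu{K}(\bb{L})$ — as the gauge quotient of the locally closed subvariety $\til{\cu{K}}(\bb{L})\subset\prod GL(r,\bb{C})$ cut out by (G1)--(G3) — and check that the SYZ parametrisation is a morphism for it, so that ``dimension cannot increase'' genuinely applies. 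Everything upstream of that is the combinatorial bookkeeping furnished by Lemmas \ref{lem:slope_difference} and \ref{lem:local_system}.
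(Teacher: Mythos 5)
Your proposal is correct and follows essentially the same route as the paper: the paper's proof is exactly the count that the number of matrices $n_{\sigma_i\sigma_{i+1}}$ equals $\#\Sigma(1)$, that each has at most $\tfrac{r(r-1)}{2}$ free entries (by (N1), (N2) and Lemma \ref{lem:slope_difference}), and that Theorem \ref{thm:bundle_unobstructed} guarantees the construction exhausts $\cu{K}(\bb{L})$. You merely spell out the details the paper leaves implicit (the vacuous obstructed case, the normalization of $\cu{L}$, and the ``image has dimension at most that of the source'' step).
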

    \begin{proof}
        The number of $n_{\sigma_i\sigma_{i+1}}$'s is exactly the number of rays in $\Sigma$ and each $n_{\sigma_i\sigma_{i+1}}$ has at most $\frac{r(r-1)}{2}$ free variables. By Theorem \ref{thm:bundle_unobstructed}, our construction extracts all the possible Kaneyama data up to equivalence. The inequality follows.
    \end{proof}
    
    \begin{remark}\label{rmk:deformation}
    The moduli space $\cu{K}(\bb{L})$ is parametrized, up to the equivalence defined in Definition \ref{def:compatible}, by the variables $n_{\sigma_i\sigma_{i+1}}^{(\alpha\beta)}$, which only depend on $N_{\sigma_i\sigma_{i+1}}$ or $\Theta_{\sigma_i\sigma_{i+1}}$. As was discussed in Remark \ref{rem:SYZ}, these parameters are related to holomorphic disks bounded by a Lagrangian multi-section and some SYZ fibers. One should expect that these variables are actually mirror to the moduli parameters of $A_{\infty}$-deformations of the Lagrangian multi-section.
    \end{remark}

    Finally, we give an explicit description of the combinatorial obstruction for solving (\ref{eqn:solve_n}) in the case $r=2$. This condition is particularly easy to check. Let's recall Lemma \ref{lem:slope_difference}. In the rank 2 case, it means for any $\sigma_1,\sigma_2\in\Sigma(2)$ that intersect along an edge, we are always allowed to put 3 non-zero entries in the $2\times 2$ matrices $G_{\sigma_1\sigma_2}$. Without loss of generality, we may arrange $\sigma_1^{(1)},\sigma_2^{(1)},\dots,\sigma_k^{(1)},\sigma_1^{(2)},\sigma_2^{(2)},\dots,\sigma_k^{(2)}$ in an anti-clockwise manner such that the matrix $G_{\sigma_k\sigma_1}$ is of form
    $$\begin{pmatrix}
    z^{m(\sigma_k^{(1)})-m(\sigma_1^{(1)})} & -z^{m(\sigma_k^{(2)})-m(\sigma_1^{(1)})}\\
    z^{m(\sigma_k^{(1)})-m(\sigma_1^{(2)})} & 0
    \end{pmatrix}$$
    and all the remaining $G_{\sigma_i\sigma_{i+1}}$ are either upper-triangular or lower-triangular.
    
    \begin{definition}
    Let $\bb{L}$ be a tropical Lagrangian multi-section over a complete fan $\Sigma$. The \emph{slope matrix} $M_{\sigma_1\sigma_2}$ associated to $\sigma_1,\sigma_2\in\Sigma(n)$ is the matrix given by
    $$M_{\sigma_1\sigma_2}^{(\alpha\beta)}:=\begin{cases}
    m(\sigma_1^{(\alpha)})-m(\sigma_2^{(\beta)}) & \text{ if }m(\sigma_1^{(\alpha)})-m(\sigma_2^{(\beta)})\in(\sigma_1\cap\sigma_2)^{\vee}\cap M,\\
    \infty & \text{ otherwise}.
    \end{cases}$$
    One associates to the slope matrix $M_{\sigma_1\sigma_2}$ the \emph{monomial matrix}
    $$Z_{\sigma_1\sigma_2}^{(\alpha\beta)}:=\begin{cases}
    z^{m(\sigma_1^{(\alpha)})-m(\sigma_2^{(\beta)})} & \text{ if }m(\sigma_1^{(\alpha)})-m(\sigma_2^{(\beta)})\in(\sigma_1\cap\sigma_2)^{\vee}\cap M,\\
    0 & \text{ otherwise}.
    \end{cases}$$
    We call a slope matrix \emph{upper-triangular} (resp. \emph{lower-triangular}) if the associated monomial matrix is upper-triangular (resp. lower-triangular).    
    \end{definition}

    By Lemma \ref{lem:local_system}, the local system $\cu{L}$ needs to be chosen to have monodromy $-1$ around the ramification point. With the above choice of arrangement convention, it is necessary that the coefficient matrix of the composition $G_{\sigma_{k-1}\sigma_k}\circ\cdots\circ G_{\sigma_1\sigma_2}$ takes the form
    \begin{equation}\label{eqn:inverse}
    \begin{pmatrix}
    0 & -1\\
    1 & 1
    \end{pmatrix}.
    \end{equation}

    \begin{definition}\label{def:slope_condition}
    A combinatorially indecomposable rank $2$ tropical Lagrangian multi-section $\bb{L}$ over a complete 2-dimensional fan $\Sigma$ is said to be satisfying the \emph{slope condition} if under the above arrangement convention, one of the following conditions is satisfied.
    \begin{enumerate}
        \item If $M_{\sigma_{k-1}\sigma_k}$ is upper-triangular, there is at least one $i<k-1$ such that $M_{\sigma_i\sigma_{i+1}}$ is lower-triangular.
        \item If $M_{\sigma_{k-1}\sigma_k}$ is lower-triangular, there exists some $i,j$ with $1\leq i<j<k-1$, such that $M_{\sigma_j\sigma_{j+1}}$ is upper-triangular and $M_{\sigma_i\sigma_{i+1}}$ is lower-triangular.
    \end{enumerate}
    \end{definition}
    
    \begin{theorem}\label{thm:rank2_unobstructed}
    A combinatorially indecomposable rank $2$ tropical Lagrangian multi-section $\bb{L}$ over a 2-dimensional complete fan $\Sigma$ is unobstructed if and only if it satisfies the slope condition.
    \end{theorem}
    \begin{proof}
        If $\bb{L}$ is unobstructed and $G_{\sigma_{k-1}\sigma_k}$ is of upper-triangular type, then it is clear that we need a lower-triangular type matrix to bring it into the required form (\ref{eqn:inverse}). Suppose $G_{\sigma_{k-1}\sigma_k}$ is of lower-triangular type. There must be some $j<k-1$ so that $G_{\sigma_j\sigma_{j+1}}$ is of upper-triangular type. If there are no $i<j$ for which $G_{\sigma_i\sigma_{i+1}}$ is of lower triangular type, the composition $G_{\sigma_{k-1}\sigma_k}\circ\cdots\circ G_{\sigma_1\sigma_2}$ will then take the form
        $$\begin{pmatrix}
        1 & 0\\
        * & 1
        \end{pmatrix}
        \begin{pmatrix}
        1 & *\\
        0 & 1
        \end{pmatrix}=
        \begin{pmatrix}
        1 & *\\
        * & *
        \end{pmatrix},$$
        which can never have the required form (\ref{eqn:inverse}). It remains to prove the converse. In the upper-triangular case, let $i<k-1$ be the first index for which $M_{\sigma_i\sigma_{i+1}}$ is lower-triangular. Then  
        $$(G_{\sigma_{k-1}\sigma_k}\circ G_{\sigma_{k-1}\sigma_{k-2}}\circ\cdots\circ G_{\sigma_{i+1}\sigma_{i+2}})\circ G_{\sigma_i\sigma_{i+1}}=\begin{pmatrix}
        1 & a\\
        0 & 1
        \end{pmatrix}
        \begin{pmatrix}
        1 & 0\\
        b & 1
        \end{pmatrix}=
        \begin{pmatrix}
        1+ab & a\\
        b & 1
        \end{pmatrix},$$
        by choosing $a=-1,b=1$, we obtain (\ref{eqn:inverse}). Then we simply choose the remaining matrices to be the identity to obtain $G_{\sigma_{k-1}\sigma_k}\circ\cdots\circ G_{\sigma_1\sigma_2}=G_{\sigma_k\sigma_1}^{-1}$. For the lower-triangular case, let $i<j<k-1$ be the first index for which $M_{\sigma_j\sigma_{j+1}}$ is upper-triangular and $M_{\sigma_i\sigma_{i+1}}$ is lower-triangular. Then we have
        $$G_{\sigma_{k-1}\sigma_k}\circ\cdots\circ G_{\sigma_j\sigma_{j+1}}\circ\cdots\circ G_{\sigma_i\sigma_{i+1}}=\begin{pmatrix}
        1 & 0\\
        a & 1
        \end{pmatrix}
        \begin{pmatrix}
        1 & b\\
        0 & 1
        \end{pmatrix}
        \begin{pmatrix}
        1 & 0\\
        c & 1
        \end{pmatrix}=
        \begin{pmatrix}
        1+bc & b\\
        a+c+abc & 1
        \end{pmatrix}.$$
        Choose $b=-1,c=1$ and $a$ be arbitrary. Then the triple product equals (\ref{eqn:inverse}). Again, by choosing the remaining matrices to be the identity, we obtain $G_{\sigma_{k-1}\sigma_k}\circ\cdots\circ G_{\sigma_1\sigma_2}=G_{\sigma_k\sigma_1}^{-1}$.
    \end{proof}

    The proof of Theorem \ref{thm:rank2_unobstructed} also sharpens the inequality in Theorem \ref{thm:dim_r}.
    
    \begin{corollary}\label{cor:dim}
    If $\bb{L}$ is a combinatorially indecomposable rank $2$ tropical Lagrangian multi-section over a complete 2-dimensional fan $\Sigma$, then we have $\dim_{\bb{C}}(\cu{K}(\bb{L}))\leq\#\Sigma(1)-1$.
    \end{corollary}
    \begin{proof}
        In the proof of Theorem \ref{thm:rank2_unobstructed}, the equation $1+ab=0$ in the upper-triangular case or $1+bc=0$ in the lower-triangular case cut down the dimension by $1$. By Theorem \ref{thm:bundle_unobstructed}, our construction extracts all the possible toric structures with fixed equivariant Chern class, which is determined by $\bb{L}$. Hence $\dim_{\bb{C}}(\cu{K}(\bb{L}))\leq\#\Sigma(1)-1$.
    \end{proof}

	\bibliographystyle{amsplain}
	\bibliography{geometry}

\providecommand{\bysame}{\leavevmode\hbox to3em{\hrulefill}\thinspace}
\providecommand{\MR}{\relax\ifhmode\unskip\space\fi MR }
% \MRhref is called by the amsart/book/proc definition of \MR.
\providecommand{\MRhref}[2]{%
  \href{http://www.ams.org/mathscinet-getitem?mr=#1}{#2}
}
\providecommand{\href}[2]{#2}
\begin{thebibliography}{10}

\bibitem{Abouzaid06}
M.~Abouzaid, \emph{Homogeneous coordinate rings and mirror symmetry for toric
  varieties}, Geom. Topol. \textbf{10} (2006), 1097--1157 (electronic).
  \MR{2240909 (2007h:14052)}

\bibitem{Abouzaid09}
\bysame, \emph{Morse homology, tropical geometry, and homological mirror
  symmetry for toric varieties}, Selecta Math. (N.S.) \textbf{15} (2009),
  no.~2, 189--270. \MR{2529936 (2011h:53123)}

\bibitem{AJ}
M.~Akaho and D.~Joyce, \emph{Immersed {L}agrangian {F}loer theory}, J.
  Differential Geom. \textbf{86} (2010), no.~3, 381--500. \MR{2785840}

\bibitem{Chan09}
K.~Chan, \emph{Holomorphic line bundles on projective toric manifolds from
  {L}agrangian sections of their mirrors by {SYZ} transformations}, Int. Math.
  Res. Not. IMRN (2009), no.~24, 4686--4708. \MR{2564372 (2011k:53125)}

\bibitem{Chan-Leung10a}
K.~Chan and N.~C. Leung, \emph{Mirror symmetry for toric {F}ano manifolds via
  {SYZ} transformations}, Adv. Math. \textbf{223} (2010), no.~3, 797--839.
  \MR{2565550 (2011k:14047)}

\bibitem{Chan-Leung12}
\bysame, \emph{Matrix factorizations from {SYZ} transformations}, Advances in
  geometric analysis, Adv. Lect. Math. (ALM), vol.~21, Int. Press, Somerville,
  MA, 2012, pp.~203--224. \MR{3077258}

\bibitem{CPU14}
K.~Chan, D.~Pomerleano, and K.~Ueda, \emph{Lagrangian sections on mirrors of
  toric {C}alabi-{Y}au 3-folds}, preprint (2016),
  \href{http://arxiv.org/abs/1602.07075}{arXiv:1602.07075}.

\bibitem{CS_SYZ_imm_Lag}
K.~Chan and Y.-H. Suen, \emph{S{YZ} transforms for immersed {L}agrangian
  multisections}, Trans. Amer. Math. Soc. \textbf{372} (2019), no.~8,
  5747--5780.

\bibitem{CMS_k3bundle}
Kwokwai Chan, Ziming~Nikolas Ma, and Yat-Hin Suen, \emph{Tropical {L}agrangian
  multi-sections and smoothing of locally free sheaves over degenerate
  {C}alabi-{Y}au surfaces}, Adv. Math. \textbf{401} (2022), Paper No. 108280.
  \MR{4387852}

\bibitem{Cox_book}
David~A. Cox, John~B. Little, and Henry~K. Schenck, \emph{Toric varieties},
  Graduate Studies in Mathematics, vol. 124, American Mathematical Society,
  Providence, RI, 2011. \MR{2810322}

\bibitem{Dan_toric_geometry}
V.~I. Danilov, \emph{The geometry of toric varieties}, Uspekhi Mat. Nauk
  \textbf{33} (1978), no.~2(200), 85--134, 247. \MR{495499}

\bibitem{Fang08}
B.~Fang, \emph{Homological mirror symmetry is {$T$}-duality for {$\Bbb P^n$}},
  Commun. Number Theory Phys. \textbf{2} (2008), no.~4, 719--742. \MR{2492197
  (2010f:53154)}

\bibitem{FLTZ11b}
B.~Fang, C.-C.~M. Liu, D.~Treumann, and E.~Zaslow, \emph{The
  coherent-constructible correspondence and homological mirror symmetry for
  toric varieties}, Geometry and analysis. {N}o. 2, Adv. Lect. Math. (ALM),
  vol.~18, Int. Press, Somerville, MA, 2011, pp.~3--37. \MR{2882439}

\bibitem{FLTZ12}
\bysame, \emph{T-duality and homological mirror symmetry for toric varieties},
  Adv. Math. \textbf{229} (2012), no.~3, 1875--1911. \MR{2871160}

\bibitem{Fukaya_asymptotic_analysis}
Kenji Fukaya, \emph{Multivalued {M}orse theory, asymptotic analysis and mirror
  symmetry}, Graphs and patterns in mathematics and theoretical physics, Proc.
  Sympos. Pure Math., vol.~73, Amer. Math. Soc., Providence, RI, 2005,
  pp.~205--278. \MR{2131017}

\bibitem{FOOO1}
Kenji Fukaya, Yong-Geun Oh, Hiroshi Ohta, and Kaoru Ono, \emph{Lagrangian
  intersection {F}loer theory: anomaly and obstruction. {P}art {I}}, AMS/IP
  Studies in Advanced Mathematics, vol.~46, American Mathematical Society,
  Providence, RI; International Press, Somerville, MA, 2009. \MR{2553465}

\bibitem{Fulton_book}
William Fulton, \emph{Introduction to toric varieties}, Annals of Mathematics
  Studies, vol. 131, Princeton University Press, Princeton, NJ, 1993, The
  William H. Roever Lectures in Geometry. \MR{1234037}

\bibitem{GS1}
Mark Gross and Bernd Siebert, \emph{Mirror symmetry via logarithmic
  degeneration data. {I}}, J. Differential Geom. \textbf{72} (2006), no.~2,
  169--338. \MR{2213573}

\bibitem{GS2}
\bysame, \emph{Mirror symmetry via logarithmic degeneration data, {II}}, J.
  Algebraic Geom. \textbf{19} (2010), no.~4, 679--780. \MR{2669728}

\bibitem{GS11}
\bysame, \emph{From real affine geometry to complex geometry}, Ann. of Math.
  (2) \textbf{174} (2011), no.~3, 1301--1428. \MR{2846484}

\bibitem{Kaneyama_classification}
Tamafumi Kaneyama, \emph{On equivariant vector bundles on an almost homogeneous
  variety}, Nagoya Math. J. \textbf{57} (1975), 65--86. \MR{376680}

\bibitem{toric_bundle_Bott_tower}
Bivas Khan and Jyoti Dasgupta, \emph{Toric vector bundles on {B}ott tower},
  Bull. Sci. Math. \textbf{155} (2019), 74--91. \MR{3944136}

\bibitem{Klyachko1}
A.~A. Klyachko, \emph{Equivariant bundles over toric varieties}, Izv. Akad.
  Nauk SSSR Ser. Mat. \textbf{53} (1989), no.~5, 1001--1039, 1135. \MR{1024452}

\bibitem{LYZ}
N.~C. Leung, S.-T. Yau, and E.~Zaslow, \emph{From special {L}agrangian to
  {H}ermitian-{Y}ang-{M}ills via {F}ourier-{M}ukai transform}, Adv. Theor.
  Math. Phys. \textbf{4} (2000), no.~6, 1319--1341. \MR{1894858}

\bibitem{moduli_toric_bundle}
Sam Payne, \emph{Moduli of toric vector bundles}, Compos. Math. \textbf{144}
  (2008), no.~5, 1199--1213. \MR{2457524}

\bibitem{branched_cover_fan}
\bysame, \emph{Toric vector bundles, branched covers of fans, and the
  resolution property}, J. Algebraic Geom. \textbf{18} (2009), no.~1, 1--36.
  \MR{2448277}

\bibitem{SYZ}
A.~Strominger, S.-T. Yau, and E.~Zaslow, \emph{Mirror symmetry is
  {$T$}-duality}, Nuclear Phys. B \textbf{479} (1996), no.~1-2, 243--259.
  \MR{1429831}

\bibitem{Suen_TP2}
Y.-H. Suen, \emph{Reconstruction of holomoprhic tangent bundle of complex
  projective plane via tropical {L}agrangian multi-section}, New York Journal
  of Mathematics \textbf{27} (2021), no.~2, 1096--1114.

\end{thebibliography}

\end{document}